\tikzstyle{wc}=[fill=white, draw=black, shape=circle]
\tikzstyle{ws}=[fill=white, draw=black, shape=rectangle]
\tikzstyle{bd}=[fill=black, draw=black, shape=circle, inner sep=0pt, minimum size=4pt]
\tikzstyle{wd}=[fill=white, draw=black, shape=circle, inner sep=0pt, minimum size=4pt]
\tikzstyle{bt-up}=[fill=black, draw=black, shape=isosceles triangle, inner sep=0pt, minimum size=4pt, shape border rotate=90]
\tikzstyle{bt-down}=[fill=black, draw=black, shape=isosceles triangle, inner sep=0pt, minimum size=4pt, shape border rotate=-90]
\tikzstyle{wt-up}=[fill=white, draw=black, shape=isosceles triangle, shape border rotate=90, inner sep=0pt, minimum size=4pt]
\tikzstyle{wt-down}=[fill=white, draw=black, shape=isosceles triangle, shape border rotate=-90]
\tikzstyle{box}=[fill=white, draw=black, shape=rectangle, minimum width=0.5cm, minimum height=0.3cm]
\tikzstyle{widebox}=[fill=white, draw=black, shape=rectangle, minimum width=1.0cm, minimum height=0.3cm]
\tikzstyle{vwidebox}=[fill=white, draw=black, shape=rectangle, minimum width=1.5cm, minimum height=0.3cm]
\tikzstyle{wdiam}=[fill=white, draw=black, shape=diamond]
\tikzstyle{oc}=[fill={rgb,255: red,255; green,128; blue,0}, draw=none, shape=circle, inner sep=0pt, minimum size=4pt]
\tikzstyle{red}=[-, draw=red]
\tikzstyle{blue}=[-, draw=blue]
\tikzstyle{green}=[-, draw=green]
\tikzstyle{green}=[-, draw=green]
\tikzstyle{dash}=[-, dashed]
\tikzstyle{dot}=[-, dotted]
\tikzstyle{none}=[]
\newcommand{\1}{\mathds{1}}
\newcommand{\s}{\mathcal}
\newcommand{\bb}{\mathbb}
\newcommand{\dash}{\text{-}}
\newcommand{\id}{\textbf{id}}
\newcommand{\im}{\mathrm{im}}
\newcommand{\infl}{\mathrm{infl}}
\newcommand{\Infl}{\mathrm{Infl}}
\newcommand{\Hom}{\mathrm{Hom}}
\newcommand{\Inv}{\mathrm{Inv}}
\newcommand{\Mod}{\text{-}\mathrm{Mod}}
\newcommand{\Bim}{\text{-}\mathrm{Bim}}
\newcommand{\Gal}{\mathrm{Gal}}
\newcommand{\Aut}{\mathrm{Aut}}
\newcommand{\Br}{\mathrm{Br}}
\newcommand{\mFus}{\mathbf{mFus}}
\newcommand{\Pic}{\mathrm{Pic}}
\newcommand{\End}{\mathrm{End}}
\renewcommand{\Vec}{\mathrm{Vec}}
\newcommand{\colim}{\mathop{\mathrm{colim}}}
\renewcommand{\lim}{\mathop{\mathrm{lim}}}
\g@addto@macro\th@plain{\thm@headpunct{}}
\g@addto@macro\th@definition{\thm@headpunct{}}
\g@addto@macro\th@remark{\thm@headpunct{}}
\newtheorem*{rep@theorem}{\rep@title}
\newcommand{\newreptheorem}[2]{%
\newenvironment{rep#1}[1]{%
 \def\rep@title{#2 \ref{##1}}%
 \begin{rep@theorem}}%
 {\end{rep@theorem}}}
\newtheorem{theorem}{Theorem}[section]
\newtheorem{proposition}[theorem]{Proposition}
\newtheorem{corollary}[theorem]{Corollary}
\newtheorem{lemma}[theorem]{Lemma}
\theoremstyle{definition}
\newtheorem{definition}[theorem]{Definition}
\newtheorem{example}[theorem]{Example}
\newtheorem{remark}[theorem]{Remark}
\tikzset{Rightarrow/.style={double equal sign distance,>={Implies},->},
triple/.style={-,preaction={draw,Rightarrow}},
quadruple/.style={preaction={draw,Rightarrow,shorten >=0pt},shorten >=1pt,-,double,double
distance=0.2pt}}
\title{Invertible Fusion Categories}
\author{Sean Sanford and Noah Snyder}
\date{}
\begin{document}

\begin{abstract}

    A tensor category $\s C$ over a field $\bb K$ is said to be invertible if there's a tensor category $\s D$ such that $\s C \boxtimes \s D$ is Morita equivalent to $\Vec_{\bb K}$.
    When $\bb K$ is algebraically closed, it is well-known that the only invertible fusion category is $\Vec_{\bb K}$, and any invertible multi-fusion category is Morita equivalent to $\Vec_{\bb K}$. By contrast, we show that for general $\bb K$ the invertible multi-fusion categories over a field $\bb K$ are classified (up to Morita equivalence) by $H^3(\mathbb K;\bb G_m)$, the third Galois cohomology of the absolute Galois group of $\bb K$.  We explicitly construct a representative of each class that is fusion (but not split fusion) in the sense that the unit object is simple (but not split simple). One consequence of our results is that fusion categories with braided equivalent Drinfeld centers need not be Morita equivalent when this cohomology group is nontrivial.
\end{abstract}

\maketitle
\tableofcontents

\section{Introduction}

One of the central notions in algebra is the Brauer group of a field $\mathbb{K}$, which is the group (under tensor product) of invertible $\mathbb{K}$-algebras up to Morita equivalence. Here an algebra $A$ is invertible if there's another algebra $B$ and a Morita equivalence $A \otimes B \sim \mathbb{K}$. Multi-fusion categories can be thought of as a categorical analogue of associative algebras, and the notions of Morita equivalence and Brauer group have natural categorical analogues. In particular, a multi-fusion category $\mathcal{C}$ is invertible if there's another multi-fusion category $\mathcal{D}$ such that $\mathcal{C} \boxtimes \mathcal{D}$ is categorically Morita equivalent to $\Vec_{\mathbb{K}}$. Again Morita equivalence classes of invertible multi-fusion categories form a group which can be thought of as a higher categorical analogue of the Brauer group which we denote $\Inv(\bb K)$.

If $\bb K$ is algebraically closed, then $\Inv(\bb K)$ is the trivial group. That is, every invertible multi-fusion category is Morita equivalent $\Vec_{\bb K}$. The goal of this paper is to completely understand $\Inv(\bb K)$ in the case where the base field $\bb K$ is not algebraically closed. In particular, we compute this group, and we construct an explicit representative of each Morita class. More specifically, one of the fundamental results about the Brauer group is that $\Br(\bb K) \cong H^2(\bb K;\bb G_m)$, and we show that $\Inv(\bb K) \cong H^3(\bb K;\bb G_m)$.

This work fits into two larger research programs. Firstly, this is part of the first author's program to study multi-fusion categories over non-algebraically closed fields
\cite{sanford2024fusion, sanfordThesis, plavnik2023tambarayamagami}. Secondly, this is part of a large program \cite{etingofFusionCategoriesHomotopy2009,grossmanCyclicExtensionsFusion2015,grossmanCyclicExtensionsFusion2015, MR3778972,bonteaPointedBraidedTensor2017, nikshychCategoricalLagrangianGrassmannians2013, davydovWittGroupNondegenerate2011, davydovStructureWittGroup2011, MR4406261, brochierInvertible, johnson-freydClassificationTopologicalOrders2020, reutterMinimalNondegenerateExtensions} to think about tensor categories not individually but rather as forming a ``space'' whose points are tensor categories and whose homotopy groups capture essential concepts in tensor category theory. In particular, $\Inv(\bb K)$ is $\pi_0$ of the space of multi-fusion categories (i.e. the core of the Morita $3$-category of fusion categories). This homotopy theoretic viewpoint has been very successful in classifying extensions and classifying minimal modular extensions. Moreover, this viewpoint opens the door to using algebraic topology to study fusion categories by, for example, looking at the long exact sequences or spectral sequences.

If $\bb L$ is a finite Galois extension of $\bb K$ and $\omega$ in $Z^3(G;\bb L^\times)$ is a Galois $3$-cocycle, we construct a category $\Vec_{\bb L}^\omega\big(\Gal(\bb L/\mathbb K)\big)$ which is a Galois-twisted version of $\Gal(\bb L/\mathbb K)$-graded vector spaces over $\bb L$.

\begin{reptheorem}{Thm:ExamplesAreInvertible}
    For any Galois $3$-cocycle $\omega \in Z^3(G;\bb L^\times)$, the multi-fusion category $\Vec_{\bb L}^\omega\big(\Gal(\bb L/\mathbb K)\big)$ is invertible. This assignment $\psi_{\bb L}^0:\omega \mapsto \Vec_{\bb L}^\omega\big(\Gal(\bb L/\mathbb K)\big)$ induces a group homomorphism on cohomology classes 
    $\psi_{\bb L}:H^3(\Gal(\bb L/\mathbb K);\bb L^\times) \rightarrow \Inv(\bb K)$.
\end{reptheorem}

The group $H^3(\bb K;\bb G_m)$ is the colimit of $H^3\big(\Gal(\bb L/\bb K);\bb L^\times\big)$ under inflation maps (see Section \ref{Sec:GaloisCohomology} for more details). 

The main construction that we use in this paper is something called categorical inflation. Categorical inflation from $\mathbb{E}$ to $\mathbb{L}$ starts with a fusion category $\s C$ over $\mathbb{K}$ where the endomorphism of the unit object is $\mathbb{E}$ and returns a Morita equivalent fusion category $\Infl_{\bb E}^{\bb L}(\s C)$ where the endomorphism ring of the unit object is $\mathbb{L}$. This construction categorifies classical inflation in the following sense.

\begin{reptheorem}{Thm:CategorifiesInflation}
    Suppose that $\bb{L}\supseteq\bb{E}\supseteq\bb{K}$ is a tower of Galois extensions, then 
    \[ \Infl^{\bb L}_{\bb E}\Big(\Vec_{\mathbb E}^\omega\big(\Gal(\mathbb E/\mathbb K)\big)\Big) \mathop{\simeq}\limits^{\otimes} \Vec_{\mathbb L}^{\infl_{\bb E}^{\bb L}(\omega)}\left(\Gal(\mathbb L/\mathbb K)\right)\,,\]
    where $\infl_{\bb E}^{\bb L}(\omega)$ is cohomological inflation as in Definition \ref{Def:ClassicalInflation}.
\end{reptheorem}

In particular, replacing a Galois cohomology class by its inflation does not change the Morita equivalence class of the corresponding category.

\begin{reptheorem}{Thm:H3Classification}
    The above maps $H^3(\Gal(\bb L/\mathbb K);\bb L^\times) \rightarrow \Inv(\bb K)$ assemble into an isomorphism
    \[\Psi:H^3(\bb K;\bb G_m) \mathop{\longrightarrow}\limits^{\cong} \Inv(\bb K) \]
    between the third absolute Galois cohomology group of $\mathbb K$ and the group of invertible multi-fusion categories over $\mathbb K$ up to Morita equivalence.
\end{reptheorem}

In particular, two categories $\Vec_{\bb L}^\omega\big(\Gal(\bb L/\mathbb K)\big)$ and $\Vec_{\bb L'}^{\omega'}\big(\Gal(\bb L'/\mathbb K)\big)$ are Morita equivalent if and only if $\infl(\omega)\cdot\infl(\omega')^{-1}$ is trivial in $H^3(\Gal(\mathbb F/\mathbb K);\mathbb F^\times)$ for some extension $\bb F$ of $\bb L$ and $\bb L'$.

Note that the higher homotopy groups of the space of invertible multi-fusion categories are already known, $\pi_1$ is the Brauer group of $\bb K$, $\pi_2$ is trivial, and $\pi_3 \cong \mathbb{K}^\times$ (see e.g. \cite{etingofFusionCategoriesHomotopy2009}. In particular, using the cohomological description of the Brauer group and Hilbert's theorem 90, we have that $\pi_{3-n} \cong H^n(\bb K;\bb G_m)$ for $0 \leq n \leq 3$. It is possible to compute all these homotopy groups abstractly by using an Atiyah–Hirzebruch spectral sequence as in \cite[\S V.C]{johnson-freydClassificationTopologicalOrders2020}. In particular, this spectral sequence gives an abstract isomorphism between $H^3(\bb K;\bb G_m)$ and $\Inv(\bb K)$ as in our main theorem. But the approach we take in this paper is constructive and direct.

We also give an application of this result. It's straightforward to see that if two fusion categories are Morita equivalent categories, then their Drinfeld centers are braided equivalent.
A result of Etingof-Nikshych-Ostrik \cite[Thm 3.1]{etingofWeaklyGroup} says that the converse is true provided you're working over an algebraically closed field.  Fields $\bb K$ for which $H^3(\bb K;\bb G_m)\neq0$ exist, and therefore by Theorem \ref{Thm:H3Classification}, these fields admit fusion categories $\s C$ such that $\s Z(\s C)=\Vec_{\bb K}$, and yet $\s C$ fails to be Morita equivalent to $\Vec_{\bb K}$.  In this way, our results show that this converse breaks down over arbitrary fields, and the group $H^3(\bb K;\bb G_m)$ is precisely the obstruction.

In Section \ref{Sec:Preliminaries} we recall some background materials on division algebras, Galois cohomology, fusion categories over general fields, Morita equivalence, invertibility, and graded fusion categories. In Section \ref{sec:InvertibleMor} we introduce our main technique, categorical inflation, which we use to prove Theorem \ref{Thm:ExamplesAreInvertible} and to prove that the map out of the colimit in Theorem \ref{Thm:H3Classification} is well-defined. In Section \ref{Sec:Splitting} we prove that the map in Theorem \ref{Thm:H3Classification} is surjective, by showing that if you categorically inflate a fusion category to a field that splits all the endomorphism algebras then the result is a category of Galois twisted graded vector spaces. In Section \ref{sec:Obstruction} we prove that the map in Theorem \ref{Thm:H3Classification} is injective by analyzing some old ideas of Eilenberg-Mac Lane and Teichm\"uller, then reinterpreting them within the Morita theory of fusion categories.  Finally, in Section \ref{Sec:spectral} we discuss the connections between our result and the Atiyah-Hirzebruch spectral sequence.

\subsection{Acknowledgements}
We would like to thank Dmitri Nikshych for a conversation at Banff that eventually lead to this project.
We would like to thank David Reutter who pointed out the spectral sequence discussed in Section \ref{Sec:spectral}, Theo Johnson-Freyd for explaining his work on a related computation, and Thibault D\'ecoppet for additional insight into the differentials. This paper is part of a research program on fusion categories over arbitrary fields, and we would like to thank Julia Plavnik for suggesting this line of inquiry.  We would also like to thank Victor Ostrik and Alena Pirutka for pointing out the references in Example \ref{Eg:FieldWithNonzeroH3}. This material is based upon work supported by the National Science Foundation under Grant No. DMS-2000093.

\newpage

\section{Preliminaries} \label{Sec:Preliminaries}

\subsection{Galois cohomology} \label{Sec:GaloisCohomology}

\begin{definition}
    A topological group $G$ is said to be profinite if it is the limit of an inverse system of discrete finite groups. 
\end{definition}

\begin{remark}
    A profinite group can be endowed with a canonical topology induced by the discrete topology on each finite group.
    A topological group is profinite if and only if it is compact and totally disconnected.
\end{remark}

Consider the Galois cohomology groups $H^n\big(\Gal(\bb L/\bb K)\,;\,\bb L^\times\big)$ as $\bb L$ ranges over all finite Galois extensions of $\bb K$.  This collection of finite Galois extensions forms an inverse system of (discrete) finite groups, and its limit is the profinite group $G_{\bb K}:=\Gal(\bb K^\text{sep}/\bb K)$, also known as the absolute Galois group of $\bb K$.

\begin{definition}
    There is a category $\text{TwGrpMod}$ whose objects are pairs $(G,A)$ of a group $G$ and a $G$-module $A$.  A morphism $(G,A)\to(H,B)$ is a pair of maps $f:H\to G$ and $\varphi:A\to B$ so that
    \[\varphi\big(f(h)\cdot a\big)\;=\;h\cdot\varphi(a)\,.\]
    Composition is $(f,\varphi)\circ(g,\psi)=(gf,\varphi\psi)$.
\end{definition}

Given an abelian algebraic group $\mathbb G$ over $\bb K$, the collection 
$$\big\{\big(\Gal(\bb L/\bb K),\bb G(\bb L)\big)\big\}_{\bb L/\bb K\text{ Galois}}$$ forms a directed system in $\text{TwGrpMod}$, and its colimit is $\big(G_{\bb K};\bb G(\bb K^\text{sep})\big)$.

\begin{proposition}[{cf. \cite[ChI-\S2.2-Prop8, ChII-\S1]{serreGaloisCohomology}}]\label{Prop:AbsoluteCohomologyIsAColimit}
    The continuous cohomology groups of the profinite group $G_{\bb K}=\Gal(\bb K^\text{sep}/\bb K)$ with coefficients in $\bb G(\bb K^\text{sep})$ can be computed by
    \[H^n_\text{cts}\big(G_{\bb K};\bb G(\bb K^\text{sep})\big)\cong \colim H^n\Big(\Gal(\bb L/\bb K);\bb G(\bb L)\Big)\,.\]
\end{proposition}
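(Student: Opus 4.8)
The plan is to reduce the statement to the standard fact that the continuous cohomology of a profinite group with coefficients in a discrete module is the filtered colimit of the group cohomologies of its finite quotients, and then to identify the finite-quotient data with the Galois data. I would argue at the level of cochain complexes throughout, since filtered colimits of abelian groups are exact and therefore commute with passage to cohomology.

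First I would record two preliminary facts. \textbf{(1)} The $G_{\bb K}$-module $\bb G(\bb K^\text{sep})$ is \emph{discrete}: any point of $\bb G$ with coordinates in $\bb K^\text{sep}$ already has its coordinates in some finite subextension, and enlarging to the Galois closure we get $\bb G(\bb K^\text{sep})=\bigcup_{\bb L/\bb K\text{ fin.\ Galois}}\bb G(\bb L)$; moreover Galois descent for the algebraic group $\bb G$ (which is defined over $\bb K$) identifies $\bb G(\bb L)$ with $\bb G(\bb K^\text{sep})^{U_{\bb L}}$, where $U_{\bb L}=\Gal(\bb K^\text{sep}/\bb L)$ is the open normal subgroup corresponding to $\bb L$ under the Galois correspondence. \textbf{(2)} A continuous cochain $c\colon G_{\bb K}^{\,n}\to\bb G(\bb K^\text{sep})$ is locally constant (a continuous map to a discrete space), its image is compact hence finite, and by covering the compact group $G_{\bb K}^{\,n}$ by finitely many basic opens of the form $g\,V_g^{\,n}$ with $V_g$ open normal in $G_{\bb K}$ on which $c$ is constant, and intersecting the $V_g$, one finds a single open normal $U\leq G_{\bb K}$ such that $c$ is constant on every coset of $U^n$; after replacing $U$ by $U\cap U_{\bb L'}$ for a finite Galois $\bb L'$ containing all the (finitely many) values of $c$, we may take $U=U_{\bb L}$ for some finite Galois $\bb L/\bb K$, so that $c$ is the inflation of a cochain on $\big(\Gal(\bb L/\bb K)\big)^n$ with values in $\bb G(\bb L)$.

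Combining these, the continuous cochain complex is the filtered union of the subcomplexes obtained by inflation,
\[ C^\bullet_\text{cts}\big(G_{\bb K};\bb G(\bb K^\text{sep})\big)\;=\;\colim_{\bb L/\bb K}\;C^\bullet\big(\Gal(\bb L/\bb K);\bb G(\bb L)\big)\,, \]
the colimit being over the directed system of finite Galois extensions (any two lie in a common one, their compositum), with transition maps the cochain-level inflation maps for $\bb L\supseteq\bb E$; fact (2) says every continuous cochain lies in one of these subcomplexes, and inflation of cochains is injective, so this really is the colimit. Taking cohomology and using exactness of filtered colimits,
\[ H^n_\text{cts}\big(G_{\bb K};\bb G(\bb K^\text{sep})\big)\;\cong\;H^n\!\Big(\colim_{\bb L}C^\bullet\big(\Gal(\bb L/\bb K);\bb G(\bb L)\big)\Big)\;\cong\;\colim_{\bb L}H^n\big(\Gal(\bb L/\bb K);\bb G(\bb L)\big)\,, \]
the transition maps on the right being the cohomological inflation maps, which is the assertion.

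The main obstacle is really the only substantive point, fact (2): showing that a continuous map out of the compact power $G_{\bb K}^{\,n}$ into a discrete target factors through a finite quotient of the \emph{group} $G_{\bb K}$ itself (not merely through some finite quotient of $G_{\bb K}^{\,n}$ as a topological space), which is where profiniteness is used essentially via the neighborhood basis of open normal subgroups at the identity together with compactness. The rest is formal bookkeeping, and the precise statements are recorded in \cite[Ch.~I \S2.2, Ch.~II \S1]{serreGaloisCohomology}.
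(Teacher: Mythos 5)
Your proof is correct and is exactly the standard argument that the paper is invoking by citation to Serre: the paper gives no proof of its own, deferring to \cite[Ch.~I \S2.2, Ch.~II \S1]{serreGaloisCohomology}, where the result is established precisely by showing that continuous cochains into the discrete module $\bb G(\bb K^\text{sep})=\bigcup_{\bb L}\bb G(\bb L)$ factor through finite Galois quotients and then using exactness of filtered colimits. The one substantive point you rightly isolate --- that a continuous cochain on $G_{\bb K}^{\,n}$ is constant on cosets of $U^n$ for a single open normal $U\leq G_{\bb K}$ --- is handled correctly by your compactness-plus-intersection argument.
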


We are primarily concerned with the case where $\bb G=\bb G_m$ the multiplicative group of units, \emph{i.e.} $\bb G_m(\bb L)=\bb L^\times$.

\begin{definition}\label{Def:ClassicalInflation}
    Given a tower of Galois extensions $\bb L\supseteq \bb E\supseteq K$, there is a corresponding morphism $\big(\Gal(\bb E/\bb K),\bb E^\times\big)\to\big(\Gal(\bb L/\bb K),\bb L^\times\big)$ in $\text{TwGrpMod}$, and the image of this morphism under the functor $H^*$ is called inflation:
    \[\;\infl_{\bb E}^{\bb L}\;:=\;H^*\Big(\big(\Gal(\bb E/\bb K),\bb E^\times\big)\to\big(\Gal(\bb L/\bb K),\bb L^\times\big)\Big)\,.\]
    We simply write $\infl$ when the two field extensions can be inferred from context.
\end{definition}

\begin{definition}
    The absolute Galois cohomology of $\bb K$, with multiplicative coefficients is
    \[H^n(\bb K;\bb G_m)\;:=\;H^n_\text{cts}\big(G_{\bb K};(\bb K^\text{sep})^\times\big)\,.\]
\end{definition}

\begin{example}\label{Eg:BrauerGroup}
    The classical Brauer group $\Br(\bb K)$ of $\bb K$ is the group of (finite dimensional) central simple algebras over $\bb K$, up to Morita equivalence.  Extension of coefficients from $\bb K$ to $\bb L$ gives rise to a homomorphism $\Br(\bb K)\to\Br(\bb L)$.  The kernel of this homomorphism is known as the relative Brauer group $\Br(\bb L/\bb K)$. 
    
    According to Jacobson (see \cite[Intro, p21]{MR3114982}), it was Emmy Noether who first proved that $\Br(\bb L/\bb K)\cong H^2\big(\Gal(\bb L/\bb K);\bb L^\times\big)$, despite the fact that the language of group cohomology was not available to her at the time.  Noether's proof was phrased in terms of factor sets --- now recognizable as Galois cocycles --- and their corresponding crossed-product algebras.  Since all central simple algebras split over $\bb K^\text{sep}$, Noether's result implies that, in modern notation, $\Br(\bb K)\cong H^2(\bb K;\bb G_m)\,.$
\end{example}

The categories we introduce in Section \ref{Sec:GalGradedVS} are essentially a categorification of Noether's crossed-product algebras, and their Morita equivalence classes provide a higher categorical analogue of the Brauer group.

\begin{example}\label{Eg:FieldWithNonzeroH3}
    Let $\mathds k$ be a field of characteristic $0$ and let $\bb K=\mathds k(a,b,c)$, where $a$, $b$, and $c$ are indeterminates over $\mathds k$.  There is a version of the Hochschild-Serre spectral sequence that can be used to analyze the cohomology groups of a variety $V$ over $\bb K$:
    \[E_{2}^{p,q}=H^p\big(\bb K\,;\,H^q(\overline{V};\bb G_m)\big)\Rightarrow H^{p+q}(V;\bb G_m)\,,\]
    where $\overline{V}$ denotes the $\overline{\bb K}=\bb K^{sep}$ points of $V$.

    This gives rise to an exact sequence
    \[\Br(\bb K)\to\Br_1(V)\to H^1(\bb K;\Pic(\overline{V}))\to H^3(\bb K;\bb G_m)\,.\]
    
    Uematsu has shown in \cite[Thm 5.1]{uematsuOnTheBrauerGroup} that whenever $\mathds k$ has a primitive third root of unity, and $V$ is a diagonal cubic, it follows that the first map is surjective, and $H^1(\bb K;\Pic(\overline{V}))\cong\bb Z/3\bb Z$.  Thus the field $\bb K=\mathds k(a,b,c)$ has nonzero $H^3(\bb K;\bb G_m)$.

    Similarly, in \cite{rimanVanishing}, Riman considered the case where $\mathds k=\bb Q^{cyc}$ (the cyclotomic closure of $\bb Q$) and where $V$ is a certain del Pezzo surface of degree 4 (see \emph{loc. cit.}).  The result \cite[Thm 3.1]{rimanVanishing} establishes that again the first map is surjective, and $H^1(\bb K;\Pic(\overline{V}))\cong\bb Z/2\bb Z$.  By the same reasoning as above, it follows that $\bb K=\mathds k(a,b,c)$ has nonzero $H^3(\bb K;\bb G_m)$.
\end{example}

The above examples show that $H^3(\bb K;\bb G_m)$ can be nonzero, but for many fields this group is known to be trivial.

\begin{example}\label{Eg:H3IsZero}
    The group $H^3(\bb K;\bb G_m)$ is trivial whenever $\bb K$ is local, or global, or algebraically closed.  The algebraically closed case is obvious, because the absolute Galois group is trivial.
    
    For any non-Archimedean local field $\bb K$, \cite[Cor 7.2.2]{MR2392026} implies that $H^3(\bb K;\bb G_m)=0$.  The Archimedean local field $\bb R$ has absolute Galois group $\bb Z/2\bb Z$, so the cohomology can be computed to be trivial directly from the periodic projective resolution for cyclic groups.

    The global case can be deduced from considering idèles as in \cite[Case $r=3$, p199]{MR220697}.
\end{example}

\subsection{Fusion categories over general fields}

Since fusion categories are often considered only over algebraically closed fields, it is necessary to give a definition of what a fusion category over $\bb K$ means when $\bb K$ is arbitrary.

In order for the collection of multi-fusion categories to be well-behaved, we would like it to be closed under the operation of taking Deligne tensor products.  When the base field is assumed to be neither algebraically closed nor characteristic zero, semisimplicity is not enough.  

Here we alter the definition of fusion to require a notion called local separability, define Deligne tensor products, and explain why we believe this alteration is necessary.

\begin{definition}\label{Def:LocallySeparable}
    A finite semisimple $\mathbb K$-linear category is said to be locally separable (over $\mathbb K$) if the endomorphism algebra of each simple object is a separable $\mathbb K$ algebra.
\end{definition}

\begin{example}\label{Eg:SemisimpleButNotLocSep}
    Let $\mathbb L/\mathbb K$ be an inseparable field extension of finite degree.  The category $\Vec_{\mathbb L}$ locally separable over $\mathbb L$, but not locally separable over $\mathbb K$.
\end{example}

Locally separable categories admit an alternative characterization that may prove to be more satisfying to certain readers.

\begin{proposition}\label{Prop:ModOfASeparableAlgebra}
    A finite semisimple, $\bb K$-linear category $\s C$ is locally separable if and only if there exists a finite dimensional separable $\bb K$-algebra $A$ so that $\s C\simeq A\Mod$.
\end{proposition}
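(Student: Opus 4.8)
The plan is to combine the Morita-theoretic normal form for finite semisimple categories with the standard permanence properties of separable algebras (passage to finite products, direct factors, opposite algebras, and matrix algebras, all of which follow uniformly from the criterion that a finite-dimensional $\bb K$-algebra $B$ is separable iff $B\otimes_{\bb K}\overline{\bb K}$ is semisimple). For the ``only if'' direction, I would fix representatives $X_1,\dots,X_n$ of the isomorphism classes of simple objects of $\s C$ and set $P=\bigoplus_i X_i$. Since every object of $\s C$ is a finite direct sum of copies of the $X_i$, the object $P$ is a projective generator, so $\Hom_{\s C}(P,-)$ is an equivalence $\s C\xrightarrow{\;\sim\;}A\Mod$ with $A=\End_{\s C}(P)^{\mathrm{op}}$. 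Because the $X_i$ are pairwise non-isomorphic simples, $\Hom_{\s C}(X_i,X_j)=0$ for $i\neq j$, so $A\cong\prod_i \End_{\s C}(X_i)^{\mathrm{op}}$, a finite product of (opposite) finite-dimensional division algebras by Schur's lemma. Local separability says each $\End_{\s C}(X_i)$ is separable over $\bb K$, hence so is each $\End_{\s C}(X_i)^{\mathrm{op}}$, and hence so is their finite product $A$; thus $\s C\simeq A\Mod$ with $A$ separable.

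For the converse, suppose $\s C\simeq A\Mod$ with $A$ a finite-dimensional separable $\bb K$-algebra. A separable algebra is semisimple (e.g.\ since $A\otimes_{\bb K}\overline{\bb K}$ is), so $A\Mod$, and therefore $\s C$, is a finite semisimple $\bb K$-linear category; it remains to verify local separability. Writing $A\cong\prod_i M_{n_i}(D_i)$ by Artin--Wedderburn, the simple objects of $A\Mod$ are the standard simple modules $S_i$ over the matrix factors, with $\End_A(S_i)$ equal to $D_i$ up to $(-)^{\mathrm{op}}$ (I will pin down the left/right convention and compute this once in the write-up). Separability descends to direct factors and is a Morita invariant, so $A$ separable forces each $M_{n_i}(D_i)$, and hence each $D_i$, to be separable over $\bb K$; therefore the endomorphism algebra of every simple object of $\s C$ is separable, which is exactly local separability.

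I expect no genuine difficulty here --- the proof is essentially bookkeeping --- so the only points that need care are: (i) stating ``separable'' in the form that makes the permanence properties transparent, and recording that for finite-dimensional algebras the definitions via $\overline{\bb K}$-semisimplicity, via projectivity as a bimodule over itself, and via a separability idempotent all coincide; (ii) tracking left-versus-right modules and opposite algebras so that the endomorphism ring of a simple object genuinely matches the division-algebra factor of $A$ up to operations under which separability is invariant; and (iii) noting that ``finite semisimple'' is understood to mean finitely many simple isomorphism classes with finite-dimensional Hom-spaces, which is what guarantees $P=\bigoplus_i X_i$ is a legitimate projective generator, and which holds automatically for $A\Mod$ when $A$ is finite-dimensional.
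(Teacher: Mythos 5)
Your proof is correct. The paper states Proposition \ref{Prop:ModOfASeparableAlgebra} without proof, so there is no argument to compare against; your route --- the projective generator $P=\bigoplus_i X_i$ with $A=\End_{\s C}(P)^{\mathrm{op}}\cong\prod_i\End_{\s C}(X_i)^{\mathrm{op}}$ in one direction, and Artin--Wedderburn together with the permanence of separability under finite products, opposites, and matrix algebras in the other --- is exactly the standard argument the authors are implicitly invoking, and the bookkeeping points you flag (left/right conventions, equivalence of the definitions of separability) are the only ones that need care.
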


Since local separability is meant to be a strengthening of semisimplicity, the characterization in Proposition \ref{Prop:ModOfASeparableAlgebra} is the reason why Definition \ref{Def:LocallySeparable} presupposes semisimplicity.

\pagebreak

\begin{definition}\label{Def:multi-fusion}
    A multi-fusion category $\s C$ over a field $\bb K$ is a finite, (semisimple, ) locally separable over $\mathbb K$, rigid monoidal category, whose product $\otimes$ is $\mathbb K$-linear in both variables.  If the unit object $\1$ happens to be simple (resp. split simple), then $\s C$ is said to be fusion (resp. split fusion).
\end{definition}

It should be noted that simplicity of an object is meant in the sense of Schur: an object is simple if it has no nontrivial subobjects.  This allows for simple objects whose endomorphism rings are nontrivial division algebras over the base field.  An object with $\End(X)\cong\bb K$ is said to be \emph{split} or \emph{split-simple}.  A category is said to be \emph{split} if all simple objects are split, and this paper pays particular attention to non-split categories. 
Over an algebraically closed field the notions of fusion and split fusion coincide, and one may need to take care over a non-algebraically closed field to determine which notion is more appropriate.

\begin{example}\label{Eg:LBimK}
A key example of a non-split multi-fusion category over a field $\bb K$ is the the category of $\mathbb L$-bimodules that are merely $\mathbb K$-linear, which we denote $\mathbb L\Bim_{\mathbb K}$. If the extension $\bb L/ \bb K$ is Galois then $\mathbb L\Bim_{\mathbb K}$ is semisimple with simple objects $\bb L_g$ for every Galois automorphism $g\in\Gal(\bb L/\bb K)$. The simple object $\bb L_g$ is isomorphic to $\bb L$ as a left $\bb L$-module, but the right action is twisted by $g$. In particular, the endomorphism algebra of each simple object is $\bb L$ and thus these categories are fusion but not split fusion (over $\mathbb K$). The fact that left and right $\mathbb L$-action do not agree means that $\mathbb L\Bim_{\mathbb K}$ is merely fusion over $\mathbb K$ and not fusion over $\mathbb L$.  
\end{example}

Note that in any fusion category $\s C$ over $\bb K$, $\End(\1)$ is a finite dimensional division algebra over $\bb K$ by Schur's lemma, and by the Eckmann-Hilton argument it is automatically commutative.  Thus $\End(\1)$ is a finite degree field extension of $\bb K$.  More generally, $\End(\1)$ is a product of field extensions whenever $\s C$ is multi-fusion.  The following notation is common in the higher fusion category literature.

\begin{definition}\label{Def:LoopsC}
    The endomorphism algebra of a multi-fusion category $\s C$ is $$\Omega\s C:=\End(\1)\,.$$
\end{definition}

A key feature of the Example \ref{Eg:LBimK} is that the elements of $\Omega\s C$ behave differently on the left and right.  The following definition makes this phenomenon precise.

\pagebreak

\begin{definition}[{\cite[Def 3.6]{plavnik2023tambarayamagami}}]\label{Def:Left&RightEmbeddings}
    Let $\s C$ be fusion over $\bb K$, $e\in\Omega\s C$ and $X$ be any object in $\s C$.  The endomorphisms $e\cdot\id_X,\id_X\cdot e:X\to X$ are defined as the compositions below
    \[\begin{tikzcd}[ampersand replacement=\&]
    	\& X \& X \\
    	{\1\otimes X} \&\&\& X\otimes\1 \\
    	{\1\otimes X} \&\&\& X\otimes\1 \& {.} \\
    	\& X \& X
    	\arrow["{e\cdot\id_X}"', from=1-2, to=4-2]
    	\arrow["{\id_X\cdot e}", from=1-3, to=4-3]
    	\arrow["{\ell_X^{-1}}"', from=1-2, to=2-1]
    	\arrow["{e\otimes\id_X}"', from=2-1, to=3-1]
    	\arrow["{\ell_X}"', from=3-1, to=4-2]
    	\arrow["{r_X^{-1}}", from=1-3, to=2-4]
    	\arrow["{\id_X\otimes e}", from=2-4, to=3-4]
    	\arrow["{r_X}", from=3-4, to=4-3]
    \end{tikzcd}\]
    These define algebra embeddings
    \[(-)\cdot\id_X\,,\,\id_X\cdot(-):\Omega\s C\hookrightarrow\End(X),\]
    that are called the left and right embeddings for $X$.  The naturality of the unitors $\ell$ and $r$ imply that the left and right embeddings factor through the inclusion of the center of $\End(X)$.
\end{definition}

One important consequence of allowing non-split simple objects is the fact that the Deligne tensor product $\boxtimes=\boxtimes_{\bb K}$ uses the operation of Karoubi completion in a nontrivial way.

\begin{definition}
    Given two finite $\bb K$-linear abelian categories $\s C$ and $\s D$, their na\"ive tensor product $\s C\otimes\s D$ is described as follows
    \begin{itemize}
        \item Objects in $\s C\otimes\s D$ are formal symbols $X\boxtimes Y$ consisting of an object $X\in\s C$ and an object $Y\in \s D$.
        \item Morphisms are given by the formula $\Hom_{\s C\otimes\s D}(X_1\boxtimes Y_1,X_2\boxtimes Y_2):=\Hom_{\s C}(X_1,X_2)\otimes_{\bb K}\Hom_{\s D}(Y_1,Y_2)$.
        \item Composition is given in terms of simple tensors by the formula $(f_1\otimes g_1)\circ(f_2\otimes g_2):=(f_1\circ f_2)\otimes(g_1\circ g_2)$.
    \end{itemize}
\end{definition}

This na\"ive tensor product category $\s C\otimes\s D$ is typically not abelian, because certain morphisms can fail to have images.  To correct this issue, a formal completion is necessary.

\begin{definition}[{\cite[cf. \S2.4]{lopezFrancoTensorProducts}}]
    The Deligne tensor product $\s C\boxtimes\s D$ of two finite $\bb K$-linear abelian categories $\s C$ and $\s D$ is defined to be the completion of $\s C\otimes\s D$ under all finite colimits.
\end{definition}

\begin{remark}
    In the locally separable setting, the operation of completion under finite colimits amounts to creating direct sums and images for idempotents.  This operation is often referred to as Cauchy completion, while the term idempotent/Karoubi completion refers to the process of adding images for idempotents only.

    When the categories are not locally separable, it is possible that not all images of morphisms appear as images of idempotents in the na\"ive category.  Specifically the images of nilpotent endomorphisms can have this issue.  This is when the full power of completion under \emph{all} finite colimits is necessary in order to make the resulting category abelian.
\end{remark}

\subsection{Morita equivalence}

We assume that the reader is familiar with module categories and bimodule categories following \cite[Ch 7]{etingof2015tensor}.

\begin{definition}
    Let $\mathcal C$ and $\mathcal D$ be multi-fusion categories over $\mathbb K$, and let $\mathcal M$ be a $\mathcal C\dash\mathcal D$-bimodule category. Then $\mathcal M$ is said to be a Morita equivalence (also called an invertible bimodule) if there exists a $\mathcal D\dash\mathcal C$-bimodule category $\mathcal{N}$ and bimodule equivalences $\mathcal{M} \boxtimes_{\mathcal{D}} \mathcal{N} \cong \mathcal{C}$ and $\mathcal{N} \boxtimes_{\mathcal{C}} \mathcal{M} \cong \mathcal{D}$. Here $\mathcal{N}$ is called the inverse bimodule to $\mathcal{M}$, and vice versa.

    When a Morita equivalence between $\s C$ and $\s D$ exists, we say that $\s C$ and $\s D$ are Morita equivalent and write $\s C\sim\s D$.
\end{definition}

\begin{definition}
    Let $A$ be a (unital) algebra in a multi-fusion category $\mathcal C$ over $\mathbb K$.  The symbols $_A\mathcal C$, $\mathcal C_A$, and $_A\mathcal C_A$ will denote the categories of left modules, right modules, and bimodules for $A$ in $\mathcal C$ respectively.
\end{definition}

Note that with this notation, the categories $\mathbb L\Bim_{\mathbb K}$ from Example \ref{Eg:LBimK} can be written as $_{\mathbb L}(\Vec_{\mathbb K})_{\mathbb L}$.

\begin{example}
    Given an algebra $A$ in a multi-fusion category $\mathcal C$, the category $\mathcal C_A$ is a left $\mathcal C$-module category.  If $\s C$ is fusion, then $\s C_A$ is faithful as a $\s C$-module category.  If $A$ is such that $_A\mathcal C_A$ is multi-fusion, and $\s C_A$ is a faithful $\s C$-module category, then $\mathcal C_A$ is a Morita equivalence between $\mathcal C$ and $_A\mathcal C_A$.
\end{example}

\begin{example}\label{Eg:OstriksThm&MoritaEq}
    Given a multi-fusion category $\s C$ over $\bb K$ and a semisimple module category $\s M$, Ostrik's theorem \cite[Thm 1]{ostrikModuleCatsWeakHopf} says that there is an algebra $A$ in $\s C$, and an equivalence of left module categories $\s M\to\s C_A$.  Moreover, if $\s M$ is an invertible $\s C\dash\s D$-bimodule, then $\s D$ is equivalent to ${}_{A}\s C_A$
\end{example}

We have the following definition from \cite[\S 2.4]{douglasDualizable}

\begin{definition}
    Suppose that $\mathcal{M}$ is a $\mathcal C\dash\mathcal D$-bimodule. Let $\mathcal{M}^*$ be $M^{\mathrm{op}}$, and we use the notation $m^*$ to refer to the object in $\mathcal{M}^*$ corresponding to $m$. We can endow $\mathcal{M}^*$ with the structure of a $\mathcal D\dash\mathcal C$-bimodule with the action given by 
    $$d \triangleright m^* \triangleleft c := \left({}^*c \triangleright m \triangleleft {}^*d\right)^*.$$
    We define ${}^*\mathcal{M}$ similarly.
\end{definition}

In \cite[\S 2.4]{douglasDualizable} it is shown that $\mathcal{M}^*$ is the right dual to $\mathcal{M}^*$ and ${}^*\mathcal{M}$ is the left dual to $\mathcal{M}$. This implies the following fact (see also \cite[\S 3.2]{etingofFusionCategoriesHomotopy2009}).

\begin{lemma}
    If $\mathcal M$ is an invertible bimodule, then the inverse bimodule is bimodule equivalent to $\mathcal M^*$ (and also equivalent to ${}^*\mathcal M$).
\end{lemma}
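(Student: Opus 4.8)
The plan is to reduce the statement to two completely standard facts about the bicategory $\mathrm{BiMod}_{\bb K}$ whose objects are multi-fusion categories over $\bb K$, whose $1$-morphisms are bimodule categories, and whose horizontal composition is $\boxtimes$ over the intermediate category: \emph{(i)} an invertible $1$-morphism in a bicategory can always be promoted to an adjoint equivalence, so that its inverse is simultaneously a left adjoint and a right adjoint; and \emph{(ii)} adjoints in a bicategory are unique up to a canonical equivalence (the usual comparison of the unit and counit of one adjunction with those of the other). Neither fact depends on which bicategory we are in, so the work is just to identify the players.

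First I would record the data: by hypothesis there is a $\s D\dash\s C$-bimodule category $\s N$ together with bimodule equivalences $\s M\boxtimes_{\s D}\s N\simeq\s C$ and $\s N\boxtimes_{\s C}\s M\simeq\s D$. By fact \emph{(i)} --- i.e. by the standard correction of one of these two invertible $2$-cells so that the triangle identities hold --- $\s N$ is promoted to a genuine two-sided adjoint of $\s M$, with unit and counit $2$-isomorphisms exhibiting $\s N$ both as a right adjoint and as a left adjoint of $\s M$ in $\mathrm{BiMod}_{\bb K}$. On the other hand, the result of \cite[\S 2.4]{douglasDualizable} recalled just above says precisely that $\s M^*$ is the right dual --- that is, the right adjoint --- of $\s M$, and that ${}^*\s M$ is its left adjoint. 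Applying fact \emph{(ii)} to the two right adjoints $\s N$ and $\s M^*$ of $\s M$ produces a bimodule equivalence $\s N\simeq\s M^*$, and applying it to the two left adjoints $\s N$ and ${}^*\s M$ produces $\s N\simeq{}^*\s M$. As a byproduct, $\s M^*\simeq{}^*\s M$, so for an invertible bimodule the left and right duals coincide.

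For readers who prefer to see the equivalence $\s N\simeq\s M^*$ produced explicitly: when $\s M$ is invertible the unit $u\colon\s C\to\s M\boxtimes_{\s D}\s M^*$ of the duality is itself a bimodule equivalence (it is the unit of the canonical adjunction attached to an invertible $1$-morphism), and then
\[\s N\;\simeq\;\s N\boxtimes_{\s C}\s C\;\simeq\;\s N\boxtimes_{\s C}(\s M\boxtimes_{\s D}\s M^*)\;\simeq\;(\s N\boxtimes_{\s C}\s M)\boxtimes_{\s D}\s M^*\;\simeq\;\s D\boxtimes_{\s D}\s M^*\;\simeq\;\s M^*\,,\]
using associativity and unitality of $\boxtimes$ together with the equivalence $\s N\boxtimes_{\s C}\s M\simeq\s D$; the claim for ${}^*\s M$ is symmetric, with the duality counit in place of $u$. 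The only genuine work in the whole argument is organizational: one needs the bicategory structure on multi-fusion categories and their bimodule categories (the associators and unitors for $\boxtimes$, together with the coherence making horizontal composition functorial), after which the ``invertible $\Rightarrow$ adjoint equivalence'' and ``uniqueness of adjoints'' lemmas apply verbatim. For this I would invoke the existing development of the Morita $3$-category of multi-fusion categories rather than reprove it; that bookkeeping, and not any new idea, is the main obstacle.
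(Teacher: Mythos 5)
Your argument is correct and is essentially the paper's own: the paper gives no written proof, simply citing \cite[\S 2.4]{douglasDualizable} for the fact that $\s M^*$ and ${}^*\s M$ are the right and left duals of $\s M$, from which the lemma follows by uniqueness of inverses/adjoints exactly as you spell out. Your explicit chain $\s N\simeq\s N\boxtimes_{\s C}(\s M\boxtimes_{\s D}\s M^*)\simeq\s M^*$ is the standard way to make that implication precise.
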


\begin{proposition}\cite[Prop 4.2]{etingofFusionCategoriesHomotopy2009}\label{Prop:MoritaTFAE(Fusion)}
    When $\s C$ and $\s D$ are fusion over $\bb K$, and $\s M$ is a $\s C\dash\s D$-bimodule, the following are equivalent.
    \begin{enumerate}[(i)]
        \item $\s M$ is invertible,
        \item There is a $\s D$-bimodule equivalence $\s M^*\boxtimes_{\s C}\s M\simeq\s D$
        \item There is a $\s C$-bimodule equivalence $\s M\boxtimes_{\s D}\s M^*\simeq\s C$
        \item The action functor $R:\mathcal{D}^\text{mp} \rightarrow \mathrm{End}_{\mathcal{C}}(\mathcal{M})$ given by $R(d)=(-)\triangleleft d$ is a monoidal equivalence.
        \item The action functor $L:\mathcal{C} \rightarrow \mathrm{End}(\mathcal{M})_{\mathcal{D}}$ given by $L(c)=c\triangleright(-)$ is a monoidal equivalence. 
    \end{enumerate}
\end{proposition}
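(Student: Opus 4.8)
The plan is to follow the proof of \cite[Prop.\ 4.2]{etingofFusionCategoriesHomotopy2009}, checking that its module-theoretic ingredients survive the passage to a base field $\bb K$ that is neither algebraically closed nor of characteristic zero. The equivalence (i)$\,\Leftrightarrow\,$(ii)$\,\wedge\,$(iii) is formal and needs nothing new: since $\mathcal M^*$ is the right dual of $\mathcal M$ in the Morita bicategory, the preceding lemma identifies the inverse bimodule of an invertible $\mathcal M$ with $\mathcal M^*$, and unwinding the definition of ``inverse bimodule'' shows this is exactly the conjunction of (ii) and (iii); conversely, (ii) and (iii) together exhibit $\mathcal M^*$ as a two-sided inverse. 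So the content is to prove (iii)$\,\Leftrightarrow\,$(v), (ii)$\,\Leftrightarrow\,$(iv), and that either of (ii), (iii) already forces invertibility.

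For (iii)$\,\Leftrightarrow\,$(v) I would use two facts that hold over an arbitrary field by finiteness, semisimplicity, and local separability alone. First, the relative Deligne product computes module-functor categories, giving a monoidal $\mathcal C$-bimodule equivalence $\mathcal M\boxtimes_{\mathcal D}\mathcal M^*\simeq\mathrm{End}(\mathcal M)_{\mathcal D}$ (cf.\ \cite{lopezFrancoTensorProducts}). Second, the action functor $L\colon\mathcal C\to\mathrm{End}(\mathcal M)_{\mathcal D}$ is always fully faithful when $\mathcal C$ is fusion, because a $\mathcal D$-module natural endotransformation of $c\triangleright(-)$ is determined by an endomorphism of $c$ once one knows $\mathcal M$ is faithful as a $\mathcal C$-module category --- which is automatic, since $\1_{\mathcal C}$ is simple. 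Granting these, (iii) gives an abstract equivalence $\mathrm{End}(\mathcal M)_{\mathcal D}\simeq\mathcal C$; a fully faithful functor carries simple objects to simple objects and is injective on isomorphism classes, and an injection between two finite sets of isomorphism classes of simples of equal cardinality is a bijection, so $L$ is also essentially surjective, hence a (monoidal) equivalence --- this is (v). Conversely, (v) makes $L$ a $\mathcal C$-bimodule equivalence $\mathcal C\simeq\mathrm{End}(\mathcal M)_{\mathcal D}$, which composed with the equivalence above yields (iii). The implication (ii)$\,\Leftrightarrow\,$(iv) is the mirror argument applied to the left $\mathcal C$-module structure on $\mathcal M$, with $R$ and $\mathcal D^\text{mp}$ in the roles of $L$ and $\mathcal C$, using that $\mathcal M^*\boxtimes_{\mathcal C}\mathcal M$ is the corresponding module-functor category (up to monoidal reversal).

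To close the loop I would prove (iii)$\,\Rightarrow\,$(i), the implication (ii)$\,\Rightarrow\,$(i) being symmetric. Assume (iii), equivalently (v), so $L$ identifies $\mathcal C$ with $\mathrm{End}(\mathcal M)_{\mathcal D}$. The key claim is that for $\mathcal C$ fusion the bimodule $\mathcal M$ is \emph{always} a Morita equivalence between $\mathrm{End}(\mathcal M)_{\mathcal D}$ (acting tautologically on the left) and $\mathcal D$ (acting on the right as given): regarding $\mathcal M$ as a right $\mathcal D$-module category --- faithful, since $\mathcal D$ is fusion --- Ostrik's theorem (Example \ref{Eg:OstriksThm&MoritaEq}) presents $\mathcal M\simeq {}_B\mathcal D$ for an algebra $B$ in $\mathcal D$, whence $\mathrm{End}(\mathcal M)_{\mathcal D}\simeq {}_B\mathcal D_B$; once one knows ${}_B\mathcal D_B$ is again multi-fusion (i.e.\ locally separable over $\bb K$, cf.\ Proposition \ref{Prop:ModOfASeparableAlgebra}), the module category ${}_B\mathcal D$ is faithful and realizes a Morita equivalence ${}_B\mathcal D_B\sim\mathcal D$, as recalled in the examples above. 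Transporting the tautological left action along $L^{-1}$ --- which returns precisely the original $\mathcal C$-action, since $L(c)=c\triangleright(-)$, while leaving the given right $\mathcal D$-action untouched --- then exhibits $\mathcal M$ as a Morita equivalence $\mathcal C\sim\mathcal D$, i.e.\ (i). Altogether: (iii)$\,\Rightarrow\,$(i)$\,\Rightarrow\,$(ii)$\,\wedge\,$(iii), together with the symmetric (ii)$\,\Rightarrow\,$(i), make all five conditions equivalent.

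The one step that is not a routine transcription of the algebraically closed case --- and is genuinely invisible there --- is the assertion that the corner category ${}_B\mathcal D_B$ (equivalently the dual category $\mathrm{End}(\mathcal M)_{\mathcal D}$) is not merely finite semisimple but locally separable over $\bb K$, so that it is a bona fide multi-fusion category and the relative Deligne products above are well behaved. This is exactly where the structure theory of fusion categories over arbitrary fields (e.g.\ \cite{sanford2024fusion, sanfordThesis}) does real work; the remainder of the argument is insensitive to the base field.
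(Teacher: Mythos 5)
Your overall architecture is sound and matches the route of \cite[Prop 4.2]{etingofFusionCategoriesHomotopy2009}, but the one lemma you introduce to drive (iii)$\,\Leftrightarrow\,$(v) is false as stated. You claim that $L:\s C\to\End(\s M)_{\s D}$ is \emph{always} fully faithful when $\s C$ is fusion, deducing this from faithfulness of $\s M$ as a $\s C$-module. Faithfulness of the module category gives faithfulness of $L$, not fullness: take $\s C=\Vec_{\bb K}(G)$ for a nontrivial finite group $G$, $\s D=\Vec_{\bb K}$, and $\s M=\Vec_{\bb K}$ with $\s C$ acting through the fiber functor. This $\s M$ is a faithful $\s C$-module, $\End(\s M)_{\s D}\simeq\Vec_{\bb K}$, and $L$ is the fiber functor, which is not full ($\Hom(\delta_g,\delta_h)=0$ maps to $\Hom(\id,\id)=\bb K$). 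Your justification only addresses endotransformations of a single $c\triangleright(-)$ and says nothing about morphisms between distinct objects; in fact fullness of $L$ is essentially equivalent to $\s D$ exhausting $\End_{\s C}(\s M)^{\mathrm{mp}}$, i.e.\ to a form of condition (iv), so assuming it begs the question. Your "fully faithful plus equal count of simples" step therefore does not go through as written. The standard repair is cheap: given the $\s C$-bimodule equivalence $F:\s C\to\End(\s M)_{\s D}$ supplied by (iii) and the identification $\s M\boxtimes_{\s D}\s M^*\simeq\End(\s M)_{\s D}$, one has $F(c)\cong L(c)\circ F(\1)$ by the bimodule constraint, and $(-)\circ F(\1)$ is an equivalence because $F$ is; hence $L$ itself is an equivalence. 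With that substitution (and its mirror for (ii)$\,\Leftrightarrow\,$(iv)) your chain of implications closes correctly.

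For comparison, the paper's own proof is two sentences: it observes that the only input not available verbatim from the algebraically closed case is that $\s M$ is locally separable, which follows from \cite[Thm A.23]{sanford2024fusion} since $\s C$ is locally separable and $\s D$ is semisimple, after which the proof of \emph{loc.\ cit.}\ applies unchanged. Your diagnosis that the genuinely new content over a general $\bb K$ is a local separability statement (you place it on the dual category ${}_B\s D_B\simeq\End(\s M)_{\s D}$ rather than on $\s M$ itself) is the right instinct and is closely related to the paper's citation; like the paper, you leave that point to external references, which is acceptable here. The remaining steps of your argument --- the formal equivalence of (i) with (ii)$\,\wedge\,$(iii), the use of Ostrik's theorem and the double centralizer to get (iii)$\,\Rightarrow\,$(i), and the transport of the tautological action along $L^{-1}$ --- are correct.
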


\begin{proof}
    Suppose $\s C$ and $\s D$ are fusion and $\s M$ is a Morita equivalence between them.  Since $\s C$ is locally separable, and $\s D$ is semisimple, it follows from \cite[Thm A.23]{sanford2024fusion} that $\s M$ is locally separable.
    
    With all relevant categories being locally separable, the rest of proof \emph{loc. cit.} is valid as written.
\end{proof}

Note that the fusion assumption is essential in Proposition \ref{Prop:MoritaTFAE(Fusion)}, we will also use a slight generalization to the multi-fusion setting.

\begin{lemma} \label{Lem:FaithfulAndCommutant} \
    Suppose that $\s C$ and $\s D$ are multi-fusion and $\mathcal{M}$ is a $\mathcal C\dash\mathcal D$-bimodule, then $\mathcal{M}$ is invertible iff the following two conditions hold
    \begin{enumerate}[(i)]
        \item $\mathcal{M}$ is faithful as a $\mathcal{C}\dash$module
        \item the action functor $R:\mathcal{D}^\text{mp} \rightarrow \mathrm{End}_{\mathcal{C}}(\mathcal{M})$ is an equivalence.
    \end{enumerate}
\end{lemma}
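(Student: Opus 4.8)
The plan is to deduce this from Ostrik's theorem (Example~\ref{Eg:OstriksThm&MoritaEq}) together with the Example immediately preceding it, which is exactly the tool that promotes the pair of conditions ``$\s C_A$ faithful over $\s C$'' and ``${}_A\s C_A$ multi-fusion'' into an honest Morita equivalence. The forward direction I would do by a short formal argument. Assuming $\s M$ is invertible with inverse $\s N$, so that $\s M\boxtimes_{\s D}\s N\simeq\s C$ and $\s N\boxtimes_{\s C}\s M\simeq\s D$ as bimodules, condition (i) follows because if $c\in\s C$ satisfies $c\triangleright(-)\cong 0$ on $\s M$, then applying $(-)\boxtimes_{\s D}\s N$ gives $c\triangleright(-)\cong 0$ on $\s M\boxtimes_{\s D}\s N\simeq\s C$, i.e. $c\otimes(-)\cong 0$ on $\s C$, and evaluating at $\1$ forces $c\cong 0$. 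For (ii), since $\s N$ is invertible the functor $\s N\boxtimes_{\s C}(-)$ is an equivalence from left $\s C$-module categories to left $\s D$-module categories (quasi-inverse $\s M\boxtimes_{\s D}(-)$) sending $\s M$ to $\s N\boxtimes_{\s C}\s M\simeq\s D$; being an equivalence of $2$-categories it induces monoidal equivalences on endomorphism categories, so $\mathrm{End}_{\s C}(\s M)\simeq\mathrm{End}_{\s D}({}_{\s D}\s D)\simeq\s D^{\mathrm{mp}}$, and tracing the right $\s D$-action on $\s M$ through this chain identifies the composite with $R$; hence $R$ is an equivalence.

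For the converse I would invoke Ostrik's theorem. Since $\s C$ and $\s D$ are locally separable (being multi-fusion), \cite[Thm A.23]{sanford2024fusion} shows $\s M$ is locally separable, so Ostrik's theorem yields an algebra $A\in\s C$ and an equivalence of left $\s C$-modules $\s M\simeq\s C_A$. Using the standard monoidal equivalence $\mathrm{End}_{\s C}(\s C_A)\simeq({}_A\s C_A)^{\mathrm{mp}}$ (given by $N\mapsto(-)\otimes_A N$) and transporting the right $\s D$-action across $\s M\simeq\s C_A$, hypothesis (ii) becomes a monoidal equivalence $\s D^{\mathrm{mp}}\xrightarrow{\ \sim\ }({}_A\s C_A)^{\mathrm{mp}}$, hence $\s D\simeq{}_A\s C_A$; in particular ${}_A\s C_A$ is multi-fusion. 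Hypothesis (i) says $\s C_A\simeq\s M$ is faithful over $\s C$. The Example preceding~\ref{Eg:OstriksThm&MoritaEq} then shows $\s C_A$ is a Morita equivalence between $\s C$ and ${}_A\s C_A$, and transporting along $\s D\simeq{}_A\s C_A$ exhibits $\s M$ as a Morita equivalence between $\s C$ and $\s D$.

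The main obstacle I anticipate is keeping track of the bimodule structures through these identifications: one must check that the right $\s D$-action obtained by transporting first along $\s M\simeq\s C_A$ and then along $\s D\simeq{}_A\s C_A$ is the \emph{original} right action, so that the bimodule handed back by the Example genuinely is $\s M$ and not merely some other $\s C$-$\s D$-bimodule with the same underlying left $\s C$-module --- which forces one to use the single equivalence $\s D\simeq{}_A\s C_A$ coming from (ii) consistently throughout. One should also confirm that the multi-fusion form of Ostrik's theorem applies with no simple-unit hypothesis (faithfulness from (i) being precisely what makes this work), and keep the $(-)^{\mathrm{op}}$ versus $(-)^{\mathrm{mp}}$ conventions straight, which is routine but error-prone.
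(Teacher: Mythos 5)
Your proof is correct and follows essentially the same route as the paper: the forward direction is the same ``easily seen'' formal argument, and for the converse the paper simply cites \cite[Thm 7.12.11]{etingof2015tensor} to get that a faithful module category $\s M$ is a Morita equivalence between $\s C$ and $\End_{\s C}(\s M)^{\mathrm{mp}}$, then uses (ii) to identify the latter with $\s D$ --- which is precisely the content you unfold by hand via Ostrik's theorem and the Example on $\s C_A$. The bookkeeping concerns you flag (matching the transported right $\s D$-action with the original one, and the $\mathrm{mp}$ conventions) are real but routine, and the paper does not spell them out either.
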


\begin{proof}
    The only difference between this case and the fusion case is the possibility that $\s M$ can fail to be faithful.  When $\s M$ is invertible both of these properties are easily seen to hold.  Conversely, suppose that \emph{(i)} holds.  The proof of \cite[Thm 7.12.11]{etingof2015tensor} requires no alteration to apply in our setting, and so $\s M$ is a Morita equivalence between $\s C$ and $\End_{\s C}(\s M)^{mp}$.  If \emph{(ii)} also holds, then it follows that $\s M$ is a Morita equivalence.
\end{proof}

\begin{definition}
    Given two module categories $\s M$ and $\s N$ over a multi-fusion category $\s C$, we can form the direct sum $\s M\oplus\s N$.  As a category it is equivalent to the direct product $\s M\times\s N$, and it comes equipped with projection and inclusion functors
    \begin{gather*}
        P_{\s M}:\s M\oplus\s N\to\s M\hspace{5mm}I_{\s M}:\s M\to\s M\oplus\s N\\
        P_{\s N}:\s M\oplus\s N\to\s N\hspace{5mm}I_{\s N}:\s N\to\s M\oplus\s N\,,
    \end{gather*}
    that are all $\s C$-module functors, and such that $P_{\s M}I_{\s M}=\id_{\s M}$, $P_{\s N}I_{\s N}=\id_{\s N}$, and $\id_{\s M\oplus\s N}=I_{\s M}P_{\s M}\oplus I_{\s N}P_{\s N}$.
    
    A $\s C$-module category is said to be decomposable if it is equivalent (as a $\s C$-module category) to $\s M\oplus\s N$, where $\s M$ and $\s N$ are both nonzero.  A $\s C$-module category is said to be indecomposable if it is not decomposable.  A $\s C$-$\s D$-bimodule category is said to be indecomposable if it is indecomposable when thought of as a left $\s C\boxtimes\s D^{mp}$-module category.
\end{definition}

\begin{proposition}\label{Prop:TFAEIndecModules}
    Given a multi-fusion category $\s C$ and a semisimple module category $\s M$, the following are equivalent
    \begin{enumerate}[(i)]
        \item $\s M$ is indecomposable,
        \item for every pair of simple objects $M,N\in \s M$, there exists an object $X\in\s C$ so that $M\hookrightarrow X\triangleright N$,
    \end{enumerate}
\end{proposition}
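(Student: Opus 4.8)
The plan is to prove the equivalence by establishing the two implications separately, with essentially all of the content going into $(i)\Rightarrow(ii)$.

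For $(ii)\Rightarrow(i)$ I would argue contrapositively. If $\s M$ is decomposable, write $\s M\simeq\s M_1\oplus\s M_2$ as $\s C$-module categories with both summands nonzero, and choose simple objects $M\in\s M_1$ and $N\in\s M_2$. Since the $\s C$-action respects the decomposition, $X\triangleright N$ lies in $\s M_2$ for every $X\in\s C$, and because $\Hom$-spaces between objects of $\s M_1$ and $\s M_2$ vanish, $\Hom_{\s M}(M,X\triangleright N)=0$ for all $X$; in particular $M$ is never a subobject of $X\triangleright N$, so $(ii)$ fails. This direction is routine.

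For $(i)\Rightarrow(ii)$ the idea is to package condition $(ii)$ as an equivalence relation. On the set of isomorphism classes of simple objects of $\s M$, define $N\preceq M$ to mean that $N$ is a subobject of $X\triangleright M$ for some $X\in\s C$; since $N$ is simple this is equivalent to $\Hom_{\s M}(N,X\triangleright M)\neq 0$. Reflexivity is immediate from $\1\triangleright M\cong M$. Transitivity uses that $\triangleright$ is additive and $\s M$ is semisimple, so that a subobject is automatically a direct summand: from $N\hookrightarrow X\triangleright M$ and $M\hookrightarrow Y\triangleright P$ one gets $N\hookrightarrow X\triangleright(Y\triangleright P)\cong(X\otimes Y)\triangleright P$. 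For symmetry I would invoke rigidity of $\s C$: the functor $X\triangleright(-)$ admits ${}^*X\triangleright(-)$ as a left adjoint, so $\Hom_{\s M}(N,X\triangleright M)\cong\Hom_{\s M}({}^*X\triangleright N,M)$; if the left side is nonzero then there is a nonzero map onto the simple object $M$, which splits by semisimplicity, giving $M\hookrightarrow{}^*X\triangleright N$, i.e.\ $M\preceq N$. Thus $\preceq$ is an equivalence relation. Finally, for a simple $M$ every simple summand of $X\triangleright M$ lies in the class $[M]$, so the full subcategory of $\s M$ consisting of direct sums of simples in a fixed equivalence class is closed under the $\s C$-action, i.e.\ a $\s C$-submodule, and it is a direct summand of $\s M$ since $\s M$ is semisimple. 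If $\preceq$ had at least two classes, $\s M$ would decompose as a nontrivial direct sum of $\s C$-submodules, contradicting $(i)$; hence there is a unique class, which is exactly statement $(ii)$.

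The main obstacle is the symmetry of $\preceq$, i.e.\ passing from $N\hookrightarrow X\triangleright M$ back to $M\hookrightarrow Y\triangleright N$: this is the step that fails for a general non-rigid monoidal category, it is the only place duals are used, and one must be a little careful about left versus right duals and which adjunction is invoked. Everything else follows formally from semisimplicity of $\s M$ together with additivity and exactness of the module action.
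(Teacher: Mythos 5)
Your proof is correct and complete; the paper in fact states Proposition \ref{Prop:TFAEIndecModules} without proof, and your argument is the standard one (essentially the treatment of indecomposable module categories in \cite[Ch 7]{etingof2015tensor}). The one delicate point, symmetry of your relation $\preceq$, is handled properly: you use rigidity of the multi-fusion category (guaranteed by Definition \ref{Def:multi-fusion}) to get the adjunction $\Hom_{\s M}(N,X\triangleright M)\cong\Hom_{\s M}({}^*X\triangleright N,M)$, and semisimplicity to split the resulting nonzero map onto the simple $M$, so the equivalence classes of simples really do yield a $\s C$-module direct sum decomposition.
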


\begin{example}\label{Eg:IndecVecModules}
    Let $\bb L/\bb K$ be any Galois extension.  The only simple object in $\Vec_{\bb L}$ is $\bb L=\1$, and this must act by the identity functor on any module category.  Thus a module category for $\Vec_{\bb L}$ is indecomposable if and only if it is rank 1, \emph{i.e.} has a unique simple object.
\end{example}

\begin{definition}
    A multi-fusion category over $\bb K$ is (in)decomposable if it is (in)decomposable as a bimodule category over itself.
\end{definition}

\begin{example}
    Let $\s C$ be a fusion category over $\bb K$.  The matrix category $M_n(\s C)$ is $\End(\s C_{\s C}^{\oplus n})$ of right $\s C$-module endofunctors of $\s C^{\oplus n}$.  Composition of functors endows $M_n(\s C)$ with a monoidal structure.  If we let $\1_i$ be the copy of $\1\in\s C$ that comes from the $i^\text{th}$ summand in $\s C^{\oplus n}$, then we can define a right $\s C$-module functor $F_{i,j}$ by the rule that $F_{i,j}(\1_k)=\delta_{j,k}\1_i$.  It follows that every simple object in $M_n(\s C)$ is of the form $X\otimes F_{i,j}(-)$ for some simple $X\in\s C$ and some pair $(i,j)$.  Direct computation shows that $(X\otimes F_{i,j})^*=(X^*\otimes F_{j,i})$, so $M_n(\s C)$ is rigid.  Finally $M_n(\s C)$ is locally separable, because $\End(X\otimes F_{i,j})\cong\End(X)$, and $\s C$ was fusion to begin with.  Thus $M_n(\s C)$ is a multi-fusion category over $\bb K$.

    The unit object of $M_n(\s C)$ is $\bigoplus_i^nF_{i,i}$, and so $M_n(\s C)$ is multi-fusion and not fusion whenever $n>1$.  Furthermore, $M_n(\s C)$ is indecomposable.  To see this, let $X\otimes F_{i,j}$ and $Y\otimes F_{k,l}$ be two simple objects in $M_n(\s C)$.  By using the coevaluation for ${}^*Y$, there is an obvious inclusion
    \[X\otimes F_{i,j}\hookrightarrow\big((X\otimes{}^*Y)\otimes F_{i,k}\big)\circ(Y\otimes F_{k,l})\circ(\1\otimes F_{l,j})\,,\]
    and hence $M_n(\s C)$ is indecomposable as a bimodule over itself by Proposition \ref{Prop:TFAEIndecModules} part \textit{(ii)}.
\end{example}

\subsection{Invertible multi-fusion categories}

\begin{definition}\label{Def:Invertibility}
    A multi-fusion category $\s C$ is called invertible if there exists another multi-fusion category $\s D$ and a Morita equivalence between $\s C \boxtimes \s D$ and $\Vec_{\bb K}$.

    The property of being invertible is a Morita invariant, and Morita equivalence classes of invertible multi-fusion categories form a group $\Inv(\bb K)$ under Deligne tensor product.
\end{definition}

This is the usual notion of invertibility in the monoidal $3$-category of multi-fusion categories, separable bimodules, bimodule functors, and bimodule natural transformations (see \cite[Def-Prop 1.1]{brochierDualizabilityBraidedTensor2021} and \cite{brochierInvertible}).

\begin{example}\label{Eg:Closed=>Inv(k)=1}
    When $\bb K$ is separably, or algebraically closed, the only invertible multi-fusion categories are $M_n(\Vec_{\bb K})$, and so the only invertible \emph{fusion} category over $\bb K$ is $\Vec_{\bb K}$.  In other words, $\Inv(\bb K)=1$.
\end{example}

\begin{definition}\label{Def:Mopposite}
    Given a monoidal category $\s C$, the monoidal opposite $\s C^\text{mp}$ has the same underlying category as $\s C$, but has the monoidal product $X\otimes^\text{mp}Y:=Y\otimes X$.  The associator is given by $\alpha^\text{mp}_{X,Y,Z}:=\alpha_{Z,Y,X}^{-1}$.
\end{definition}

\begin{lemma}
    If $\s C$ is invertible, then the inverse is $\s{C}^{mp}$ and the Morita equivalence is $\s C$ with $\s{C} \boxtimes \s{C}^{mp}$ acting by left and right multiplication.
\end{lemma}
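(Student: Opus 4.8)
The plan is to exhibit $\s C$ itself --- with its regular bimodule structure, i.e.\ left action $(c\boxtimes c')\triangleright X:=c\otimes X\otimes c'$ and trivial right $\Vec_{\bb K}$-action by scalars --- as an invertible $(\s C\boxtimes\s C^{mp})$-$\Vec_{\bb K}$-bimodule; granting this, $\s C^{mp}$ is by definition an inverse for $\s C$, and since inverses in the group $\Inv(\bb K)$ are unique it is \emph{the} inverse. First I would note that $\s C\boxtimes\s C^{mp}$ is multi-fusion: local separability is preserved under $\boxtimes$ (via Proposition \ref{Prop:ModOfASeparableAlgebra}, since a Deligne product of module categories is modules over the tensor product algebra and a tensor product of separable $\bb K$-algebras is separable), even though it is typically not fusion because $\Omega(\s C\boxtimes\s C^{mp})=\Omega\s C\otimes_{\bb K}\Omega\s C$ need not be a field. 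So I would apply the multi-fusion Morita criterion, Lemma \ref{Lem:FaithfulAndCommutant}: it suffices to check (i) $\s C$ is faithful as a left $\s C\boxtimes\s C^{mp}$-module, and (ii) the right-action functor $R:\Vec_{\bb K}^{mp}\to\End_{\s C\boxtimes\s C^{mp}}(\s C)$, $V\mapsto(-)\otimes_{\bb K}V$, is an equivalence (note $\Vec_{\bb K}^{mp}=\Vec_{\bb K}$).

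Condition (i) is immediate from rigidity: a nonzero object $c\boxtimes c'$ sends the unit to $c\otimes c'\neq 0$, since in a rigid category $\otimes$ has no zero divisors, so no nonzero object of $\s C\boxtimes\s C^{mp}$ acts by the zero endofunctor. For condition (ii) I would invoke the classical identification of the relative center: the category $\End_{\s C\boxtimes\s C^{mp}}(\s C)$ of bimodule endofunctors of the regular $\s C$-bimodule is monoidally equivalent to the Drinfeld center $\s Z(\s C)$ (the standard argument --- a bimodule endofunctor is $(-)\otimes Z$ for a unique $Z$ carrying a half-braiding, with composition going to tensor product --- works over an arbitrary field once all categories in sight are locally separable), and under this equivalence $R$ corresponds to the canonical inclusion $\Vec_{\bb K}\hookrightarrow\s Z(\s C)$. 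Thus (ii) is equivalent to the single claim $\s Z(\s C)\simeq\Vec_{\bb K}$.

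Proving that claim is where invertibility of $\s C$ actually gets used. Choosing $\s D$ with $\s C\boxtimes\s D\sim\Vec_{\bb K}$ and using that the Drinfeld center is a Morita invariant (as observed in the introduction) and is multiplicative under Deligne products, I get $\s Z(\s C)\boxtimes\s Z(\s D)\simeq\s Z(\s C\boxtimes\s D)\simeq\s Z(\Vec_{\bb K})=\Vec_{\bb K}$. Comparing endomorphism algebras of the units yields $\Omega\s Z(\s C)\otimes_{\bb K}\Omega\s Z(\s D)=\bb K$, which forces $\Omega\s Z(\s C)=\bb K$; and a dimension count --- equivalently, observing that the fully faithful functor $X\mapsto X\boxtimes\1$ sends simples of $\s Z(\s C)$ to simples of $\Vec_{\bb K}$, all isomorphic to $\1$ --- forces $\s Z(\s C)$ to have exactly one simple object, the split unit. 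Hence $\s Z(\s C)\simeq\Vec_{\bb K}$, which establishes (ii) and completes the proof.

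The step I expect to require the most care is precisely the last one. Over a non-algebraically-closed field a fusion category can be ``one-dimensional'' and still be $\Vec_{\bb L}$ for some nontrivial extension $\bb L/\bb K$ rather than $\Vec_{\bb K}$, so it is not enough to note that $\s Z(\s C)$ has trivial Frobenius--Perron dimension: one genuinely needs the comparison of the $\Omega$'s (or the analysis of simples in the Deligne product) to pin down that the unit of $\s Z(\s C)$ is split. A secondary point, to be cited rather than reproved, is that the three classical inputs above --- the identification $\End_{\s C\boxtimes\s C^{mp}}(\s C)\simeq\s Z(\s C)$, Morita-invariance of the center, and its multiplicativity under $\boxtimes$ --- all hold over an arbitrary field in the locally separable setting.
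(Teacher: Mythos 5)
Your route is genuinely different from the paper's. The paper disposes of this lemma in one line: by \cite[Prop.~3.12]{douglasDualizable}, $\s C^{mp}$ is the \emph{dual} of $\s C$ in the Morita $3$-category, with evaluation given by the regular bimodule; since the dual of an invertible dualizable object is its inverse and evaluation is then the witnessing equivalence, there is nothing left to check. You instead verify directly, via Lemma \ref{Lem:FaithfulAndCommutant}, that the regular bimodule is invertible, which forces you to first prove $\s Z(\s C)\simeq\Vec_{\bb K}$ --- i.e.\ you establish Theorem \ref{Thm:InvertibleIFFZ(C)=Vec} \emph{before} this lemma rather than deducing it afterwards as the paper does. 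That reversal is not circular (your proof of $\s Z(\s C)\simeq\Vec_{\bb K}$ uses only Morita-invariance and $\boxtimes$-multiplicativity of the center, neither of which depends on this lemma), and your endgame --- using $\Omega\s Z(\s C)\otimes_{\bb K}\Omega\s Z(\s D)\cong\bb K$ to rule out $\s Z(\s C)\simeq\Vec_{\bb L}$ for $\bb L\neq\bb K$ --- is exactly the right care to take over a non-closed field. The cost is that you must import three background facts ($\End_{\s C\boxtimes\s C^{mp}}(\s C)\simeq\s Z(\s C)$, Morita-invariance of the center, and its multiplicativity) in the locally separable setting over a general field; these are true but not free, and the paper's citation avoids all of them. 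A side benefit of your route is that it essentially shows $\s Z(\s C)\simeq\Vec_{\bb K}$ plus indecomposability already implies invertibility, i.e.\ the converse of Theorem \ref{Thm:InvertibleIFFZ(C)=Vec}, which the paper defers to a sequel.

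The one genuine gap is your condition (i). The claim that $\otimes$ has no zero divisors holds in a \emph{fusion} category (where $\1$ embeds as a summand of $X^*\otimes X$), but fails in a multi-fusion one: in $M_2(\Vec_{\bb K})$ one has $F_{1,1}\otimes F_{2,2}=0$. Since the lemma is applied to multi-fusion $\s C$ (that is the setting of Definition \ref{Def:Invertibility} and of Theorem \ref{Thm:InvertibleIFFZ(C)=Vec}), your argument that $(c\boxtimes c')\triangleright\1=c\otimes c'\neq0$ breaks down; note also that an object can annihilate the unit while acting nontrivially elsewhere ($F_{1,1}\otimes\1\otimes F_{2,2}=0$ but $F_{1,1}\otimes F_{1,2}\otimes F_{2,2}\neq0$), so testing the action on $\1$ is the wrong test in any case. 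Faithfulness of the regular bimodule is equivalent to indecomposability of $\s C$, and this genuinely requires the invertibility hypothesis: for instance, derive it from the triviality of $\s Z(\s C)$ you have already established, since a bimodule decomposition of $\s C$ would produce a nontrivial idempotent in $\End(\1_{\s Z(\s C)})=\bb K$. With that repair your argument goes through.
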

\begin{proof}
    This follows immediately from \cite[Prop. 3.12]{douglasDualizable} which says that $\s{C}^{mp}$ is the dual to $\s{C}$ with evaluation given by $\s C$ with $\s{C} \boxtimes \s{C}^{mp}$ acting by left and right multiplication.
\end{proof}

\begin{theorem}\label{Thm:InvertibleIFFZ(C)=Vec} 
    If a (locally-separable) multi-fusion category is invertible then inclusion of the unit $\Vec_{\bb K} \rightarrow \s Z(\s C)$ is an equivalence.
\end{theorem}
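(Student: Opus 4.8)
The plan is to use invertibility to realise $\s C$ itself as a Morita equivalence between $\s C\boxtimes\s C^{mp}$ and $\Vec_{\bb K}$, and then to recognise the resulting relative commutant as the Drinfeld center. By the Lemma immediately preceding the statement, invertibility of $\s C$ says exactly that $\s C$, equipped with the left-and-right multiplication action of $\s C\boxtimes\s C^{mp}$, is an invertible $(\s C\boxtimes\s C^{mp})\dash\Vec_{\bb K}$-bimodule. The category $\s C\boxtimes\s C^{mp}$ is again multi-fusion over $\bb K$ --- this is the one place where local separability is used --- so Lemma \ref{Lem:FaithfulAndCommutant} applies to this bimodule, and its part (ii) says that the action functor
\[ R\colon\Vec_{\bb K}^{mp}\longrightarrow\End_{\s C\boxtimes\s C^{mp}}(\s C)\]
is a monoidal equivalence, where the target is the category of $(\s C\boxtimes\s C^{mp})$-module endofunctors of $\s C$ with composition as its tensor product. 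Since $\Vec_{\bb K}$ is symmetric, $\Vec_{\bb K}^{mp}=\Vec_{\bb K}$, so $\End_{\s C\boxtimes\s C^{mp}}(\s C)\simeq\Vec_{\bb K}$ as monoidal categories.

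Next I would invoke the standard fact that the Drinfeld center of a multi-fusion category is its category of bimodule endofunctors of the regular bimodule: a left $(\s C\boxtimes\s C^{mp})$-module structure is the same as a $\s C$-bimodule structure, under which $\s C$ carries the regular bimodule structure, and sending an object-with-half-braiding $(Z,\gamma)$ to the endofunctor $Z\otimes(-)$ --- with left $\s C$-module structure built from $\gamma$ and the evident right $\s C$-module structure --- is a monoidal equivalence $\s Z(\s C)\simeq\End_{\s C\boxtimes\s C^{mp}}(\s C)$. This holds over any field. Combining it with the first paragraph, we obtain a monoidal equivalence $\s Z(\s C)\simeq\Vec_{\bb K}$.

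It then remains to upgrade this to the precise statement that the unit inclusion is an equivalence. A monoidal equivalence carries the unit to the unit, simples to simples, and endomorphism algebras to endomorphism algebras, so $\s Z(\s C)$ has exactly one simple object, it is $\1_{\s Z(\s C)}$, and $\End_{\s Z(\s C)}(\1)=\bb K$. The unit inclusion $\Vec_{\bb K}\to\s Z(\s C)$, $V\mapsto V\otimes_{\bb K}\1_{\s Z(\s C)}$, is a $\bb K$-linear monoidal functor; on the $\Hom$-space $\Hom(\bb K^{m},\bb K^{n})$ it is the identification of $M_{n,m}(\bb K)$ with $M_{n,m}\big(\End_{\s Z(\s C)}(\1)\big)$, hence it is fully faithful, and since every object of $\s Z(\s C)$ is a finite direct sum of copies of $\1_{\s Z(\s C)}$ it is also essentially surjective. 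Therefore it is an equivalence.

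The only real friction I anticipate is bookkeeping in the middle step: one must fix the orientation conventions carefully so that the monoidal structures on $\s Z(\s C)$ and on $\End_{\s C\boxtimes\s C^{mp}}(\s C)$ genuinely agree (rather than differing by a reversal of the tensor product, which would still be harmless since $\Vec_{\bb K}$ is symmetric), and one must check that both Morita-theoretic inputs --- Lemma \ref{Lem:FaithfulAndCommutant} and the identification of the center with bimodule endofunctors --- are being used in their multi-fusion, not merely fusion, form, which is essential because $\s C\boxtimes\s C^{mp}$ is typically not fusion even when $\s C$ is.
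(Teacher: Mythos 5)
Your proposal is correct and follows essentially the same route as the paper: both realise $\s C$ as the invertible $(\s C\boxtimes\s C^{mp})\dash\Vec_{\bb K}$-bimodule via the preceding lemma, apply Lemma \ref{Lem:FaithfulAndCommutant} part (ii), and identify the resulting relative commutant with $\s Z(\s C)$. You spell out the final upgrade from a monoidal equivalence $\s Z(\s C)\simeq\Vec_{\bb K}$ to the statement about the unit inclusion more explicitly than the paper does, but the substance is identical.
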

\begin{proof}
    By the previous Lemma, we want to check that $\s C$ is a Morita equivalence between $\s C \boxtimes \s{C}^{mp}$ and $\Vec_{\bb K}$. By Lemma \ref{Lem:FaithfulAndCommutant} part (ii), $\s{C}$ being a Morita equivalence implies that the following functor is also an equivalence:
    $$\Vec_{\bb K} \rightarrow \s C \boxtimes_{\s{C} \boxtimes \s{C}^{mp}} \s{C}^{op} = \s Z(\s C)\,.$$
\end{proof}

The converse to Theorem \ref{Thm:InvertibleIFFZ(C)=Vec} is also true, but we will not need it here.  Since it is not necessary for our current computations and requires more sophisticated Galois theory, we have chosen to relegate it to a follow up paper.

\begin{remark}
    There's a more general condition for invertibility in \cite[Thm. 2.27]{brochierInvertible}, which requires a factorizability condition in addition to the triviality of the center. We only require one of these two conditions because of the double-commutant Lemma \ref{Lem:FaithfulAndCommutant}.
\end{remark}

\subsection{Graded fusion categories}\label{Sec:GradedFusion}

A $\bb K$-linear category $\s C$ is said to be graded by a group $G$, if $\s C=\bigoplus_{g\in G}\s C_g$, where the components $\s C_g\subseteq\s C$ are full subcategories.  If $\s C$ is monoidal, then we can also ask that the grading respects the monoidal structure, in the sense that for any $X\in\s C_g$ and $Y\in\s C_h$, $X\otimes Y\in\s C_{gh}$.  A grading is said to be faithful if $\s C_g\neq0$ for every $g\in G$.

\begin{proposition}\label{Prop:GradedComponentsInvertible}
    Let $\s C$ be fusion over $\mathbb K$, and suppose that $\s C=\bigoplus_{g\in G}\s C_g$ is a faithful, monoidal grading of $\s C$.  It follows that every graded component $\,\s C_g$ is an invertible $\s C_1$-bimodule category, \emph{i.e.} a Morita equivalence from $\s C_1$ to itself.
\end{proposition}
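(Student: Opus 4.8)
The plan is to verify conditions (i) and (ii) of Lemma~\ref{Lem:FaithfulAndCommutant} for $\s C_g$ regarded as a $\s C_1$-$\s C_1$-bimodule category. First note that $\s C_1$ is a fusion category over $\bb K$, since its unit $\1$ is simple ($\s C$ being fusion); that $\s C_g$ is a full, semisimple, locally separable subcategory of $\s C$, nonzero because the grading is faithful; and that $\s C_1$ acts on $\s C_g$ on both sides by restriction of $\otimes$. So it suffices to show: (i) $\s C_g$ is faithful as a left $\s C_1$-module category, and (ii) the right-action functor $R\colon\s C_1^{\mathrm{mp}}\to\End_{\s C_1}(\s C_g)$, $R(c)=(-)\triangleleft c=(-)\otimes c$, is an equivalence. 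For (i): if $X\in\s C_1$ and $Y\in\s C_g$ are nonzero, then $Y$ is a direct summand of ${}^*X\otimes X\otimes Y$ (split off by the coevaluation, $\1$ being simple), so $X\otimes Y\neq0$; hence no nonzero object of $\s C_1$ acts by the zero functor.

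Condition (ii) is the substance. First, $R$ is a $\bb K$-linear monoidal functor between finite semisimple rigid categories, hence exact, and it is automatically faithful: $R(\1)=\mathrm{id}_{\s C_g}\neq0$ forces $R(c)\neq0$ for every simple $c$ (else $\mathrm{id}_{\s C_g}=R(\1)\hookrightarrow R(c\otimes^{\mathrm{mp}}c^*)\cong R(c)\circ R(c^*)=0$, forcing $\s C_g=0$), and a unital homomorphism out of the division algebra $\End(c)$ is injective. Next, $R$ is dominant. Fix a simple $X_0\in\s C_g$; by rigidity (compare Proposition~\ref{Prop:TFAEIndecModules}), $X_0$ generates $\s C_g$ as a left $\s C_1$-module, so by Ostrik's theorem (Example~\ref{Eg:OstriksThm&MoritaEq}) there is an equivalence of left $\s C_1$-modules $\s C_g\simeq(\s C_1)_A$ with $A=X_0\otimes X_0^*\in\s C_1$. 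Under this equivalence $\End_{\s C_1}(\s C_g)$ becomes the monoidal category of $A$-bimodules in $\s C_1$, and $R(c)$ — a left $\s C_1$-module endofunctor, hence determined by its value on the generator — corresponds to the $A$-bimodule $X_0\otimes c\otimes X_0^*$. Since $A$ is separable (its module category $\s C_g$ is semisimple and locally separable), every $A$-bimodule is a summand of a free one, $A\otimes d\otimes A=X_0\otimes(X_0^*\otimes d\otimes X_0)\otimes X_0^*$ with $X_0^*\otimes d\otimes X_0\in\s C_1$; hence every simple object of $\End_{\s C_1}(\s C_g)$ is a summand of some $R(c)$.

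Finally, the double-commutant theorem \cite[Thm~7.12.11]{etingof2015tensor} (as invoked in the proof of Lemma~\ref{Lem:FaithfulAndCommutant}) shows $\s C_g$ is already a Morita equivalence between $\s C_1$ and $\End_{\s C_1}(\s C_g)^{\mathrm{mp}}$, so these multi-fusion categories have equal Frobenius--Perron dimension. A dominant tensor functor between multi-fusion categories of equal $\FPdim$ is an equivalence: letting $A_0=R^{\mathrm R}(\1)$ be the algebra in $\s C_1^{\mathrm{mp}}$ coming from the right adjoint of $R$, dominance gives an equivalence $(\s C_1^{\mathrm{mp}})_{A_0}\simeq\End_{\s C_1}(\s C_g)$, so $\FPdim A_0=1$, whence $A_0\cong\1$ and $R$ is an equivalence. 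By Lemma~\ref{Lem:FaithfulAndCommutant}, $\s C_g$ is an invertible $\s C_1$-bimodule.

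The only nontrivial point is the interaction of dominance with dimension counting in the last two steps: tracking the right $\s C_1$-action through $\s C_g\simeq(\s C_1)_A$, the separability of $A$, and the Frobenius--Perron machinery for possibly non-split multi-fusion categories over $\bb K$ (existence of $\FPdim$, its invariance under Morita equivalence, its preservation by tensor functors, and $\FPdim(\s B_{A_0})=\FPdim\s B/\FPdim A_0$); all of this is available in \cite{sanford2024fusion}, and the rest of the argument is formal. An equivalent route, avoiding Lemma~\ref{Lem:FaithfulAndCommutant}, would be to show directly that $\otimes$ induces equivalences of $\s C_1$-bimodule categories $\s C_g\boxtimes_{\s C_1}\s C_{g^{-1}}\xrightarrow{\sim}\s C_1$ and $\s C_{g^{-1}}\boxtimes_{\s C_1}\s C_g\xrightarrow{\sim}\s C_1$ — dominant by rigidity, fully faithful by the same dimension count — exhibiting $\s C_{g^{-1}}$ as the inverse bimodule; this meets exactly the same obstacle.
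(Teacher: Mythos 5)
Your opening two-thirds runs parallel to the paper's own proof: indecomposability of $\s C_g$ via rigidity, Ostrik's theorem to write $\s C_g\simeq(\s C_1)_A$ with $A=X_0\otimes X_0^*$, and the identification of the right-action functor $R(c)$ with the $A$-bimodule $X_0\otimes c\otimes X_0^*$ are exactly what the paper does (it calls this functor $F$), and your dominance argument via separability of $A$ is a reasonable, citable addition. The problem is the final step. You conclude that $R$ is an equivalence from ``Morita equivalent multi-fusion categories have equal Frobenius--Perron dimension'' together with ``a dominant tensor functor between multi-fusion categories of equal $\FPdim$ is an equivalence.'' Over a non-algebraically-closed base field the first statement is \emph{false} for the standard Grothendieck-ring notion of $\FPdim$: $\Vec_{\bb R}$ and $\bb C\Bim_{\bb R}$ are Morita equivalent (via the module category $\Vec_{\bb C}$) with Frobenius--Perron dimensions $1$ and $2$, and likewise $\Rep_{\bb Q}(\bb Z/3)\sim\Vec_{\bb Q}(\bb Z/3)$ with dimensions $5$ and $3$. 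Any Morita-invariant replacement must carry correction factors involving $\dim_{\bb K}\End(X)$ and the splitting behaviour of simples under base change, and you would then have to re-establish the dominant-functor criterion and the formula $\FPdim(\s B_{A_0})=\FPdim(\s B)/\FPdim(A_0)$ for that corrected invariant. None of this is set up in the paper, and it is not safe to assume it is available in the cited references in the generality needed, so as written the last step does not go through; your proposed alternative route via $\s C_g\boxtimes_{\s C_1}\s C_{g^{-1}}\to\s C_1$ founders on the same point, as you yourself note.

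The paper avoids dimension counting entirely. After the Ostrik step it sets $P:=X_0^*\otimes_A X_0$, observes that evaluation induces a surjection $P\to\1$ and that $P\otimes P\cong X_0^*\otimes_A A\otimes_A X_0\cong P$, and invokes \cite[Prop 3.4]{sanford2024fusion} to conclude $P\cong\1$. That single computation makes the explicit functor $M\mapsto X_0^*\otimes_A M\otimes_A X_0$ a two-sided inverse to your $R$, so full faithfulness is obtained constructively rather than numerically. If you want to keep your dominance argument, the cleanest repair is to replace the $\FPdim$ step by this computation: via the free-module adjunctions, full faithfulness of $R$ reduces precisely to $X_0^*\otimes_A X_0\cong\1$.
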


The following proof is adapted from \cite[Ch 7]{etingof2015tensor}.

\begin{proof}
    Suppose $Y,Z\in\s C_g$ are simple objects.  By monoidality of the grading, the dual $Y^*$ is automatically an object in $\s C_{g^{-1}}$, and so $(Z\otimes Y^*)\in\s C_1$.  Using evaluation for $Y$, we find that there is a nonzero map $(Z\otimes Y^*)\otimes Y\to Z$, and this shows that $\s C_g$ is indecomposable as a left $\s C_1$-module category by Proposition \ref{Prop:TFAEIndecModules}.  A similar argument shows that $\s C_g$ is indecomposable as a right module as well.
    
    Again using the evaluation map $Y^*\otimes Y\to\1$, it follows that $A=Y\otimes Y^*$ has the structure of an algebra object in $\s C_1$, and the coevaluation $\1\to Y\otimes Y^*$ makes this algebra unital.  By Ostrik's theorem \cite[Thm 1]{ostrikModuleCatsWeakHopf}, $\s C_g\simeq(\s C_1)_A$ as left $\s C_1$ module categories via the functor $Z\mapsto Z\otimes Y^*$.  In order to show that $\s C_g$ is invertible, it is sufficient to show that the right action of $\s C_1$ on $\s C_g\simeq (\s C_1)_A$ induces an equivalence $\s C_1\simeq{}_A(\s C_1)_A$ as fusion categories.

    Consider the object $P:=Y^*\otimes_{A}\otimes Y$.  The evaluation map $Y^*\otimes Y\to\1$ is $A$-balanced, and thus factors through a map $P\to\1$.  This map is surjective, because the evaluation map is nonzero, and $\1$ is simple.  Observe that
    \[P\otimes P\;=\;Y^*\otimes_{A}\otimes Y\otimes Y^*\otimes_{A}\otimes Y\;=\;Y^*\otimes_{A}A\otimes_{A}Y\;\cong\;P\,.\]
    A simple combinatorial argument \cite[Prop 3.4]{sanford2024fusion} now shows that $P\cong\1$.

    Define a functor by the rule
    \[F:X\mapsto Y\otimes X\otimes Y^*\;,\]
    with $F(f)=\id_{Y}\otimes f\otimes\id_{Y^*}$.  There is an obvious monoidal structure given by the composition
    \begin{align*}
        F(W)\otimes F(X)&=Y\otimes W\otimes Y^*\otimes_{A} Y\otimes X\otimes Y^*\\
        &\cong Y\otimes W\otimes X\otimes Y^*\;=\;F(W\otimes X)\,.
    \end{align*}
    
    A slight modification of the above computation shows that $(Z\otimes Y^*)\otimes_AF(X)\cong(Z\otimes X)\otimes Y^*$.  In other words, $F$ is the functor that identifies the right $\s C_1$ action on $\s C_g$ with the right ${}_A(\s C_1)_A$ action on $(\s C_1)_A$.
    
    There is a clear choice for the inverse functor.
    \[F^{-1}(M)\;=\;Y^*\otimes_AM\otimes_AY\,.\]
    The composition $FF^{-1}$ is easily seen to be isomorphic to the identity functor on ${}_A(\s C_1)_A$.  That $F^{-1}F\cong\id_{\s C_1}$ follows from our previous observation that $P\cong\1$.  Thus $F:\s C_1\to{}_A(\s C_1)_A$ is an equivalence, so $\s C_g$ is an invertible $\s C_1$-bimodule category.
\end{proof}

\begin{remark}\label{Rem:FaithfulnessComment}
    Note that the above proof requires $\s C$ to be fusion and not multi-fusion.  If $\s C$ were multi-fusion, then $P\to\1$ will be surjective precisely when $\s M$ is faithful.  If $\s M$ is not faithful, then $P$ will be a proper summand of $\1$.  When this happens, ${}_{A}(\s C_1)_A$ will be equivalent to the proper subcategory of $\s C_1$ that acts faithfully on $\s M$.
\end{remark}

\begin{proposition}\label{Prop:GaloisGrading}
    If $\s C$ is a fusion category over $\bb K$, and $\End(X)\cong\bb L$ for every simple object $X$ in $\s C$, then $\s C$ admits a monoidal grading
    \begin{gather*}
        \s C\;=\;\bigoplus_{g\in\Aut(\bb L/\bb K)}\s C_g\;,\\
        \s C_g\;:=\;\{X\in \s C~|~\id_X\cdot l\,=\,g(l)\cdot\id_X\,,\,\text{ for every }l\in\bb L\}\,.
    \end{gather*}
\end{proposition}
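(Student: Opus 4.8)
The plan is to assign to each simple object a grade in $\Aut(\bb L/\bb K)$ by comparing its left and right unit actions, then to check that morphisms between objects of different grades vanish, and finally to verify compatibility with $\otimes$. Throughout I use that, by naturality of the unitors $\ell$ and $r$, the operations $l\cdot(-)$ and $(-)\cdot l$ are natural endomorphisms of $\id_{\s C}$: for every $f\colon X\to Y$ one has $f\circ(l\cdot\id_X)=(l\cdot\id_Y)\circ f$ and $f\circ(\id_X\cdot l)=(\id_Y\cdot l)\circ f$, and (applying naturality to the inclusions and projections of a biproduct) both operations respect biproduct decompositions.

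Since $\s C$ is fusion, $\1$ is simple, so the hypothesis yields $\Omega\s C=\End(\1)\cong\bb L$; fix such an identification. For a simple object $X$, the left and right embeddings $\lambda_X=(-)\cdot\id_X$ and $\rho_X=\id_X\cdot(-)$ of Definition~\ref{Def:Left&RightEmbeddings} are injective $\bb K$-algebra maps $\bb L\cong\Omega\s C\hookrightarrow\End(X)$; since $\End(X)$ is a field of degree $[\bb L:\bb K]$ over $\bb K$ by hypothesis, a dimension count shows both are isomorphisms. Hence $g_X:=\lambda_X^{-1}\circ\rho_X\in\Aut(\bb L/\bb K)$ is well defined, and is characterized by $\id_X\cdot l=g_X(l)\cdot\id_X$ for all $l\in\bb L$; that is, $X\in\s C_{g_X}$, and (because $\lambda_X$ is injective) $g_X$ is the unique grade containing $X$. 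As $\s C$ is semisimple, every object is a finite biproduct of simples, and by the biproduct-compatibility of the two actions an arbitrary object lies in $\s C_g$ exactly when all of its simple summands do.

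For the vanishing of cross-morphisms, let $f\colon X\to Y$ with $X\in\s C_g$ and $Y\in\s C_h$. Combining naturality of the two actions with the defining relations of $\s C_g$ and $\s C_h$ gives, for every $l\in\bb L$,
\[(g(l)\cdot\id_Y)\circ f\;=\;f\circ(\id_X\cdot l)\;=\;(h(l)\cdot\id_Y)\circ f,\]
hence $\big((g(l)-h(l))\cdot\id_Y\big)\circ f=0$. If $g\neq h$, choose $l$ with $g(l)\neq h(l)$; then $(g(l)-h(l))\cdot\id_Y=\lambda_Y\big(g(l)-h(l)\big)$ is a nonzero element of $\End(Y)$, and it is invertible — it lies in the division algebra $\End(Y)$ when $Y$ is simple, and is a nonzero central element of the semisimple algebra $\End(Y)$ in general — so $f=0$. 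Together with the previous paragraph, this establishes the direct-sum decomposition $\s C=\bigoplus_{g\in\Aut(\bb L/\bb K)}\s C_g$ of $\bb K$-linear categories (with possibly-zero summands).

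Finally, for compatibility with the tensor product I invoke three coherence identities, each a short consequence of the triangle and pentagon axioms together with naturality of $\alpha$, $\ell$, $r$: the left action on $X\otimes Y$ sees only the left tensorand, $m\cdot\id_{X\otimes Y}=(m\cdot\id_X)\otimes\id_Y$; the right action sees only the right tensorand, $\id_{X\otimes Y}\cdot l=\id_X\otimes(\id_Y\cdot l)$; and the ``middle'' identity $(\id_X\cdot m)\otimes\id_Y=\id_X\otimes(m\cdot\id_Y)$. Granting these, for $X\in\s C_g$, $Y\in\s C_h$, and $l\in\bb L$,
\begin{align*}
\id_{X\otimes Y}\cdot l&=\id_X\otimes\big(h(l)\cdot\id_Y\big)=\big(\id_X\cdot h(l)\big)\otimes\id_Y\\
&=\big(g(h(l))\cdot\id_X\big)\otimes\id_Y=(gh)(l)\cdot\id_{X\otimes Y},
\end{align*}
so $X\otimes Y\in\s C_{gh}$; in particular, the simple object $\1$ lies in some $\s C_{g_\1}$, and $\1\cong\1\otimes\1$ forces $g_\1=g_\1^2$, i.e.\ $\1\in\s C_e$. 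This gives the monoidal grading. The only genuine work is checking the three coherence identities — routine diagram chases of the sort already appearing in \cite{plavnik2023tambarayamagami} — and I expect this bookkeeping to be the main obstacle, everything else being Schur's lemma and a dimension count.
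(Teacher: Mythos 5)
Your proposal is correct and follows essentially the same route as the paper: identify the left and right embeddings as isomorphisms onto $\End(\1)\cong\bb L$ by a dimension count, set $g_X=\lambda_X^{-1}\circ\rho_X$, decompose by semisimplicity, and deduce monoidality from the interchange identities for the unit actions (the paper outsources that last step to \cite[Lem 3.11]{plavnik2023tambarayamagami}, which is exactly the bookkeeping you describe). The extra details you supply — the vanishing of cross-morphisms via invertibility of $\lambda_Y(g(l)-h(l))$ and the $\1\in\s C_e$ check — are correct and only make explicit what the paper leaves to semisimplicity and Schur's lemma.
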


\begin{proof}
    When all simple objects have the same field as their endomorphism algebra, this implies that the left and right embeddings (see Definition \ref{Def:Left&RightEmbeddings}) $(-)\cdot\id_X\,,\,\id_X\cdot(-):\mathbb L\hookrightarrow\End(X)$ are isomorphisms for every simple object $X$ in $\mathcal C$.  To each simple object $X$, we can associate an automorphism $g_X:\mathbb L\to\mathbb L$ determined by the formula
    \begin{equation}
        g_X(l)\cdot\id_X\;=\;\id_X\cdot l\label{Eqn:Right2Left}
    \end{equation}
    Semisimplicity of $\s C$ implies that, for any given $g\in \Aut(\bb L/\bb K)$, the collection of all simple objects $X$ with $g_X=g$ generate under direct sums a full subcategory
    \[\s C_g\;:=\;\{X\in \s C~|~\id_X\cdot l\,=\,g(l)\cdot\id_X\,,\,\text{ for every }l\in\bb L\}\subseteq\s C\,.\]
    By decomposing any given object $X$ into simples $X_i$, and recording the corresponding $g_{X_i}$'s, it is clear that
    \[\s C\;=\;\bigoplus_{g\in\Aut(\bb L/\bb K)}\s C_g\;.\]
    
    Whenever $X$, $Y$, and $Z$ are simple, and there is a nonzero morphism $X\otimes Y\to Z$, the assignment $X\mapsto g_X$ is monoidal in the sense that $g_X\circ g_Y=g_Z$  (cf. \cite[Lem 3.11]{plavnik2023tambarayamagami}).  Thus for any $X\in\s C_g$ and $Y\in \s C_h$, their tensor product $X\otimes Y$ lies in the subcategory $\s C_{gh}$, so the grading is monoidal.
\end{proof}

\begin{example}\label{Eg:GaloisGradingNotAlwaysFaithful}
    The grading in Proposition \ref{Prop:GaloisGrading} is not necessarily faithful.  For example let $\bb L/\bb K$ be Galois.  Any proper subgroup $H\lneq\Gal(\bb L/\bb K)$ gives rise to a proper full subcategory $\s C_H\subsetneq\bb L\Bim_{\bb K}$ generated by only those $\bb L_h$ where $h\in H$.  The category $\s C_H$ is fusion and $\Gal(\bb L/\bb K)$-graded, though not faithfully.
\end{example}

\begin{example}\label{Eg:NotAlwaysGaloisGraded}
    It should be noted that the left and right embeddings are typically not isomorphisms, and that categories are not always Galois-graded.
    
    Consider the category $\mathbb L\Bim_{\bb K}$ when $\mathbb L=\mathbb Q(\sqrt[3]{2})$ and $\mathbb K=\mathbb Q$, which is a non-normal field extension.  This category has two simple objects $\1$ and $X$, where $\End(X)\cong\mathbb Q(\sqrt[3]{2},\zeta_3)$ is the splitting field for $\mathbb L$.  The fusion rules are $X\otimes X=\1\oplus\1\oplus X$, and so the category has a trivial grading group.

    This failure of the existence of the Galois grading comes from the fact that the left and right embeddings fail to have the same image in $\End(X)$ --- a phenomenon that cannot happen when both embeddings are isomorphisms.
\end{example}

\section{Invertible categories up to Morita equivalence} \label{sec:InvertibleMor}

\subsection{Galois-twisted graded vector spaces}\label{Sec:GalGradedVS}

Recall from Example \ref{Eg:LBimK} that if $\bb L/ \bb K$ is Galois and $G=\Gal(\bb L/\bb K)$, then we have a $G$-graded fusion category $\mathbb L\Bim_{\mathbb K}$. This can be thought of as a Galois-twisted version of the ordinary category of $G$-graded vector spaces, where the twisting is visible in the discrepancy between the left and right embeddings (see Definition \ref{Def:Left&RightEmbeddings}). Just as the category of $G$-graded vector spaces can be twisted to change its associator by introducing a $3$-cocycle, we can generalize $\mathbb L\Bim_{\mathbb K}$ by introducing a Galois $3$-cocycle. These will be the main examples in this paper.

\begin{definition}
If $\omega$ is a cocycle in $Z^3\big( \Gal(\mathbb L/\mathbb K), (\bb L)^\times \big)$, then we let $\Vec_{\mathbb L}^\omega\big(\Gal(\mathbb L/\mathbb K)\big)$ denote the category $\bb L\Bim_{\bb K}$ equipped with the monoidal structure determined by the following diagram
\[\begin{tikzcd}[ampersand replacement=\&]
	{(\mathbb L_{a}\otimes\mathbb L_{b})\otimes\mathbb L_{c}} \&\& {\mathbb L_{a}\otimes(\mathbb L_{b}\otimes\mathbb L_{c})} \\
	{\mathbb L_{ab}\otimes\mathbb L_{c}} \&\& {\mathbb L_{a}\otimes\mathbb L_{bc}} \\
	{\mathbb L_{abc}} \&\& {\mathbb L_{abc}\;\;\;\;.}
	\arrow["{\alpha_{\mathbb L_{a},\mathbb L_{b},\mathbb L_{c}}}", from=1-1, to=1-3]
	\arrow[Rightarrow, no head, from=1-1, to=2-1]
	\arrow[Rightarrow, no head, from=2-1, to=3-1]
	\arrow[Rightarrow, no head, from=1-3, to=2-3]
	\arrow[Rightarrow, no head, from=2-3, to=3-3]
	\arrow["{\omega(a,b,c)\cdot\id_{\mathbb L_{abc}}}"', from=3-1, to=3-3]
\end{tikzcd}\]
\end{definition}

\begin{example}
    If $\omega_0$ is cohomologically trivial, then $\Vec_{\mathbb L}^{\omega_0}\big(\Gal(\mathbb L/\mathbb K)\big) \cong \mathbb L\Bim_{\mathbb K}$. In particular,
    $\Vec_{\mathbb L}^{1}\big(\Gal(\mathbb L/\mathbb K)\big)$ is Morita trivial.
\end{example}

\begin{lemma}\label{Lem:WellDefOnCohomology}
    The cocycles $\omega_1$ and $\omega_2$ are cohomologous if and only if $\Vec_{\mathbb L}^{\omega_1}\big(\Gal(\mathbb L/\mathbb K)\big)$ and $\Vec_{\mathbb L}^{\omega_2}\big(\Gal(\mathbb L/\mathbb K)\big)$ are equivalent as monoidal categories.
\end{lemma}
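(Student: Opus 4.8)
The plan is to prove the two cocycles being cohomologous is equivalent to the two twisted categories being monoidally equivalent, by setting up the equivalence explicitly on the level of the underlying category $\bb L\Bim_{\bb K}$ and tracking what the monoidal structure constraint forces.

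First I would fix notation: write $\s C_i = \Vec_{\bb L}^{\omega_i}(\Gal(\bb L/\bb K))$ for $i=1,2$, and recall that both have the same underlying $\bb K$-linear category $\bb L\Bim_{\bb K}$, with simples $\bb L_g$ for $g\in G:=\Gal(\bb L/\bb K)$, and tensor product on objects given by $\bb L_a\otimes\bb L_b \cong \bb L_{ab}$. The only difference is the associator, which on the simples is $\omega_i(a,b,c)\cdot\id_{\bb L_{abc}}$ (interpreting the left-multiplication action of $\omega_i(a,b,c)\in\bb L^\times$ via the left embedding of Definition \ref{Def:Left&RightEmbeddings}).

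For the ``only if'' direction: suppose $\omega_2 = \omega_1 \cdot d\mu$ for some $2$-cochain $\mu\in C^2(G;\bb L^\times)$, i.e. $\omega_2(a,b,c) = \omega_1(a,b,c)\cdot \frac{a(\mu(b,c))\,\mu(ab,c)^{-1}\,\mu(a,bc)\,\mu(a,b)^{-1}}{1}$ (with the appropriate convention for the $G$-action on $\bb L^\times$). I would define a monoidal functor $(F,J):\s C_1\to\s C_2$ whose underlying functor is the identity on $\bb L\Bim_{\bb K}$, with tensor constraint $J_{a,b}:\bb L_a\otimes\bb L_b \xrightarrow{\ \sim\ } \bb L_{ab}$ given by $\mu(a,b)\cdot\id_{\bb L_{ab}}$ (again via the left embedding). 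The pentagon/hexagon compatibility of $J$ with the two associators is then exactly the statement $\omega_2/\omega_1 = d\mu$; this is a routine diagram chase, with the one subtlety being to make sure that the $a$-twisted behavior of scalars on $\bb L_a$ (the discrepancy between left and right embeddings, which is what distinguishes $\bb L\Bim_{\bb K}$ from ordinary graded vector spaces) reproduces exactly the $G$-module structure on $\bb L^\times$ appearing in the cobar differential — this is the heart of why Galois cocycles, rather than ordinary cocycles, are the right gadget.

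For the ``if'' direction: suppose $(F,J):\s C_1 \to \s C_2$ is a monoidal equivalence. Since $F$ is an equivalence of $\bb K$-linear categories it must permute the simple objects; because $F$ preserves tensor products and the grading group $G$ is recovered intrinsically (e.g. as the group of invertible simples, or via the graded structure of Proposition \ref{Prop:GaloisGrading}), $F$ induces a group automorphism of $G$, and after precomposing with an autoequivalence I may reduce to the case where $F$ fixes each $\bb L_g$ (up to isomorphism — here one should be slightly careful that $F$ restricted to $\bb L_g$ is an automorphism of $\bb L_g$ as an object, hence an element of $\End(\bb L_g)^\times = \bb L^\times$, but this can be absorbed into the tensor constraint). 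Then the tensor constraint $J_{a,b}:\bb L_{ab}\to\bb L_{ab}$ is multiplication by some scalar $\mu(a,b)\in\bb L^\times$, and the monoidal coherence of $(F,J)$ with respect to the two associators says precisely $\omega_2 = \omega_1\cdot d\mu$, so the two cocycles are cohomologous.

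**The main obstacle** I anticipate is the bookkeeping around the $G$-action on $\bb L^\times$: one must carefully verify that when the scalar $\mu(a,b)$ on $\bb L_{ab}$ is pushed through the tensor constraint $J_{a,bc}$ composed with $\id\otimes J_{b,c}$, the scalar on $\bb L_b$ gets hit by the Galois automorphism $a$ (because $\bb L_b$ sits to the right of $\bb L_a$, and the right embedding of $\bb L_a$ on scalars differs from the left embedding by $a$). Getting this naturality exactly right is what converts the abstract coherence diagram into the literal coboundary formula $d\mu$ in Galois cohomology, and it is worth doing this one diagram chase explicitly rather than waving at it. A secondary, more minor point is handling the reduction in the ``if'' direction to the case where $F$ acts as the identity on objects (dealing with the induced automorphism of $G$ and the ambiguity of $F$ on each $\bb L_g$); I expect this to be straightforward but it should be stated cleanly.
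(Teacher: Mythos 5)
Your ``only if'' direction is exactly the paper's argument: the identity functor on $\bb L\Bim_{\bb K}$ with tensor constraint $\mu(a,b)\cdot\id_{\bb L_{ab}}$, with the coherence diagram unwinding to the coboundary condition once you track how scalars move past $\bb L_a$ via the left/right embedding discrepancy. That part is fine.

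The ``if'' direction has a genuine gap at the step ``after precomposing with an autoequivalence I may reduce to the case where $F$ fixes each $\bb L_g$.'' A monoidal equivalence $F$ a priori sends $\bb L_g$ to $\bb L_{\phi(g)}$ for some automorphism $\phi$ of $G$, and precomposing with the permutation equivalence $\bb L_g\mapsto\bb L_{\phi^{-1}(g)}$ changes the source associator from $\omega_1$ to $\phi^*\omega_1$; your argument then only yields $\omega_2\sim\phi^*\omega_1$, which is strictly weaker unless you know $\phi$ acts trivially on $H^3$. This is not a technicality: in the split case $\Vec_{\bb K}^\omega(G)$ the analogous statement is \emph{false}, since monoidal equivalence classes correspond to $\mathrm{Out}(G)$-orbits in $H^3(G;\bb K^\times)$ --- the paper's remark immediately following the lemma makes exactly this point. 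The missing idea, which is how the paper closes the argument, is that the Galois twisting pins down $\phi$: by naturality of $J$, the functor $F$ restricted to $\End(\1)\cong\bb L$ is a $\bb K$-algebra automorphism $f\in\Gal(\bb L/\bb K)$, and compatibility with the left and right embeddings forces $g_{FX}=f\circ g_X\circ f^{-1}$, i.e.\ $F(\bb L_g)\cong\bb L_{fgf^{-1}}$. So $\phi$ is necessarily the inner automorphism $c_f$ (paired with the action of $f$ on the coefficients $\bb L^\times$), and conjugation induces the trivial map on group cohomology (e.g.\ Brown, Prop.\ III.8.3). With that in hand your remaining bookkeeping --- absorbing the scalar ambiguity of $F$ on each simple into $J$, and reading off the coboundary from the coherence square --- goes through as you describe.
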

\begin{proof}
    Let $\tau\in C^2\big(\Gal(\bb L/\bb K);\bb L^\times\big)$ be a cochain witnessing the fact that $\omega_1$ and $\omega_2$ are cohomologous, \emph{i.e.} $\delta(\tau)=\omega_1\cdot\omega_2^{-1}$.  The identity functor can be equipped with a tensor structure $J$ by the formula $J_{\bb L_a,\bb L_b}=\tau(a,b)\cdot\id_{\bb L_{ab}}$.  The monoidality coherence condition on the pair $(\id_{\bb L\Bim_{\bb K}},J)$ is precisely the coboundary condition used to define $\tau$.

    Conversely, suppose there is a monoidal equivalence $(F,J)$ from $\Vec_{\mathbb L}^{\omega_1}\big(\Gal(\mathbb L/\mathbb K)\big)$ to $\Vec_{\mathbb L}^{\omega_2}\big(\Gal(\mathbb L/\mathbb K)\big)$.  By naturality of $J$, the action of $F$ on morphisms is completely determined by the action on $\End(\1)$.  Here the functor must act by a Galois transformation, so let us call this $f:\bb L\to\bb L$.  It follows that $g_{FX}=f\circ g_X\circ f^{-1}$, and hence that $F(X)=\bb L_f\otimes(-)\otimes\bb L_{f}^*$.  The conjugation action of a group on itself famously induces the trivial action on cohomology (see \emph{e.g.} \cite[Prop III.8.3]{MR1324339}).  Since the tensorator can only adjust the cocycle by a coboundary, it follows that $\omega_1$ and $\omega_2$ are cohomologous.
\end{proof}

In particular, we often abuse notation and refer to $\Vec_{\mathbb L}^\omega\big(\Gal(\mathbb L/\mathbb K)\big)$ where $\omega$ is an element of cohomology.

\begin{remark}
    Lemma \ref{Lem:WellDefOnCohomology} stands in contrast to the classification of (Galois trivial) $G$-graded vector spaces.  In the classical case, monoidal equivalences can act by arbitrary automorphisms of $G$.  This causes the monoidal equivalence classes of $\Vec_{\bb K}^{\omega}(G)$ to correspond to the orbits of the action of $\mathrm{Out}(G)$ on $H^3(G;\bb K^\times)$.  In our case, Galois nontriviality permits only inner automorphisms, and thus all the relevant orbits are singletons.
\end{remark}

\begin{proposition}\label{Prop:GrpHomomorphism}
    If $\omega_1, \omega_2 \in H^3\big( \Gal(\mathbb L/\mathbb K), (\bb L)^\times \big)$, then there is a Morita equivalence 
    \[\Vec_{\mathbb L}^{\omega_1}\big(\Gal(\mathbb L/\mathbb K)\big) \boxtimes \Vec_{\mathbb L}^{\omega_2}\big(\Gal(\mathbb L/\mathbb K)\big) \sim \Vec_{\mathbb L}^{\omega_1\cdot\omega_2}\big(\Gal(\mathbb L/\mathbb K)\big). \]
\end{proposition}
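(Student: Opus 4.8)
Write $G=\Gal(\mathbb L/\mathbb K)$, set $\mathcal C_i=\Vec_{\mathbb L}^{\omega_i}(G)$, and let $\mathcal C=\mathcal C_1\boxtimes_{\mathbb K}\mathcal C_2$ be the multi-fusion category whose Morita class is at issue. The plan is to exhibit a specific \emph{component} (corner) fusion category inside $\mathcal C$, identify that corner with $\Vec_{\mathbb L}^{\omega_1\cdot\omega_2}(G)$, and then apply Ostrik's theorem (as packaged in Example \ref{Eg:OstriksThm&MoritaEq}) to conclude $\mathcal C\sim\Vec_{\mathbb L}^{\omega_1\cdot\omega_2}(G)$.

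First I would unpack $\mathcal C$. Since $\mathbb L/\mathbb K$ is Galois, $\mathbb L\otimes_{\mathbb K}\mathbb L\cong\prod_{g\in G}\mathbb L$, with primitive idempotents $e_g$ cut out by the projections $l\otimes m\mapsto l\cdot g(m)$; note $e_e$ corresponds to the multiplication map $\mathbb L\otimes_{\mathbb K}\mathbb L\to\mathbb L$. Each object $\mathbb L_a\boxtimes\mathbb L_b\in\mathcal C$ has $\End=\End(\mathbb L_a)\otimes_{\mathbb K}\End(\mathbb L_b)=\mathbb L\otimes_{\mathbb K}\mathbb L$, so Cauchy completion splits it into simples $S_{a,b,k}:=(\mathbb L_a\boxtimes\mathbb L_b,\,e_k)$, $(a,b,k)\in G^3$, each with $\End\cong\mathbb L$, and $\1_{\mathcal C}=\mathbb L_e\boxtimes\mathbb L_e=\bigoplus_{k}S_{e,e,k}$. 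The key computation will be the fusion rule; using that in $\mathcal C_i$ the left $\mathbb L$-embedding into $\End(\mathbb L_a)$ is the identity while the right one is $l\mapsto a(l)$, and that the algebra map $\End(\mathbb L_a\boxtimes\mathbb L_b)\to\End(\mathbb L_{ca}\boxtimes\mathbb L_{db})$, $f\mapsto\id_{\mathbb L_c\boxtimes\mathbb L_d}\otimes f$, carries $e_h$ to $e_{chd^{-1}}$, I expect to obtain
\[
S_{c,d,k}\otimes S_{a,b,m}\ \cong\ \begin{cases} S_{ca,\,db,\,k}, & m=c^{-1}kd,\\[2pt] 0, & \text{otherwise.}\end{cases}
\]

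Next I would take $A:=S_{e,e,e}=(\1_{\mathcal C},e_e)$, which as a direct summand of the unit is canonically a commutative separable algebra in $\mathcal C$, and look at the corner $\mathcal C':={}_A\mathcal C_A$ — equivalently the full subcategory of those $X\in\mathcal C$ with $A\otimes X\otimes A\cong X$. From the fusion rule, $A\otimes S_{a,b,k}\otimes A$ is nonzero exactly when $a=b$ and $k=e$, so $\mathcal C'$ has simples $S_a:=S_{a,a,e}$ ($a\in G$), with $S_a\otimes S_b=S_{ab}$ and $\End(S_a)\cong\mathbb L$; one more pass through the embeddings gives $\id_{S_a}\cdot l=a(l)\cdot\id_{S_a}$, the same Galois twist carried by $\mathbb L_a\in\Vec_{\mathbb L}^{\omega_1\cdot\omega_2}(G)$. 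Finally, the associator of the Deligne product acts on the diagonal objects $\mathbb L_a\boxtimes\mathbb L_a$ by $\omega_1(a,b,c)\otimes\omega_2(a,b,c)\in\mathbb L\otimes_{\mathbb K}\mathbb L$, and since $e_e$ is the multiplication idempotent its restriction to $\End(S_{abc})\cong\mathbb L$ is the product $\omega_1(a,b,c)\cdot\omega_2(a,b,c)$. Hence $\mathbb L_a\mapsto S_a$ should extend to a monoidal equivalence $\Vec_{\mathbb L}^{\omega_1\cdot\omega_2}(G)\simeq\mathcal C'$; by Lemma \ref{Lem:WellDefOnCohomology} it would in fact be enough to match these two cocycles up to a coboundary.

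To finish, $\mathcal C\sim\mathcal C'$. By Example \ref{Eg:OstriksThm&MoritaEq} it suffices to check that ${}_A\mathcal C_A=\mathcal C'$ is multi-fusion — which it is, being fusion by the previous step — and that the right $A$-module category $\mathcal C_A$ is faithful as a left $\mathcal C$-module (this is also where indecomposability of $\mathcal C$ enters). For the latter, given a simple $X=S_{c,d,k}$ of $\mathcal C$, the simple $M:=S_{c^{-1}kd,\,e,\,c^{-1}kd}$ lies in $\mathcal C_A$ and $X\otimes M=S_{kd,\,d,\,k}\neq0$. Then $\mathcal C_A$ is a Morita equivalence between $\mathcal C$ and $\mathcal C'\simeq\Vec_{\mathbb L}^{\omega_1\cdot\omega_2}(G)$, and the proposition follows. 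I expect the only real obstacle to be the bookkeeping in the middle two paragraphs: tracking precisely how the algebras $\mathbb L\otimes_{\mathbb K}\mathbb L$, their idempotents, the two $\mathbb L$-embeddings, and the Deligne-product associator transform under tensor products, so as to pin the corner down as $\Vec_{\mathbb L}^{\omega_1\cdot\omega_2}(G)$ itself rather than only up to some uncontrolled twist.
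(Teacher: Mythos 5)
Your proposal is correct and follows essentially the same route as the paper: decompose $\1=\mathbb L\boxtimes\mathbb L$ via $\mathbb L\otimes_{\mathbb K}\mathbb L\cong\prod_{g\in G}\mathbb L$, work out the fusion rules of the resulting simples, pass to the diagonal corner cut out by the identity idempotent, and identify that corner with $\Vec_{\mathbb L}^{\omega_1\cdot\omega_2}(G)$ by matching embeddings and showing the Deligne-product associator collapses to the product $\omega_1\omega_2$ under the multiplication idempotent. The only cosmetic difference is that the paper concludes via the general fact that an indecomposable multi-fusion category is Morita equivalent to any diagonal summand, where you verify faithfulness of $\mathcal C_A$ by hand and invoke Ostrik's theorem — the same argument in different packaging.
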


\begin{proof}
    The monoidal unit in $\s C=\Vec_{\mathbb L}^{\omega_1}\big(\Gal(\mathbb L/\mathbb K)\big) \boxtimes \Vec_{\mathbb L}^{\omega_2}\big(\Gal(\mathbb L/\mathbb K)\big)$ is $\1=\bb L\boxtimes\bb L$.  The endomorphisms of the unit are
    \[\End(\1)\cong\bb L\otimes_{\bb K}\bb L\cong\prod_{g\in\Gal(\bb L/\bb K)}\bb L\,.\]
    Let us denote the summand of $\1$ corresponding to $g\in\Gal(\bb L/\bb K)$ by $\1_g$.  The object $\1_g$ is determined by the following formula
    \begin{equation}
        \id_{\1_g}\otimes(\id_X\boxtimes l\cdot\id_Y)\;=\;\id_{\1_g}\otimes(g(l)\cdot\id_X\boxtimes\id_Y)\,.\label{Eqn:DefOf1g}
    \end{equation}
    It follows also that a similar formula holds on the right
    \begin{equation}
        (\id_X\boxtimes\id_Y\cdot l)\otimes\id_{\1_g}\;=\;(\id_X\cdot g(l)\boxtimes\id_Y)\otimes\id_{\1_g}\,.\label{Eqn:1gOnTheRight}
    \end{equation}
    From these formulas, it follows that for any $f,g,h,k\in\Gal(\bb L/\bb K)$,
    \begin{align*}
        &\id_{\1_f}\otimes\big(\id_{\bb L_g}\boxtimes\id_{\bb L_k}\cdot l\big)\otimes\id_{\1_h}\\
        &=\id_{\1_f}\otimes\big(\id_{\bb L_g}\boxtimes k(l)\cdot\id_{\bb L_k}\big)\otimes\id_{\1_h}\\
        &=\id_{\1_f}\otimes\big((fk)(l)\cdot\id_{\bb L_g}\boxtimes\id_{\bb L_k}\big)\otimes\id_{\1_h}\\
        &=\id_{\1_f}\otimes\big(\id_{\bb L_g}\cdot(g^{-1}fk)(l)\boxtimes\id_{\bb L_k}\big)\otimes\id_{\1_h}\\
        &=\id_{\1_f}\otimes\big(\id_{\bb L_g}\boxtimes\id_{\bb L_k}\cdot(h^{-1}g^{-1}fk)(l)\big)\otimes\id_{\1_h}\,.
    \end{align*}
    If $k\neq f^{-1}gh$, then there is some element $l\in\bb L$ so that $(h^{-1}g^{-1}fk)(l)\neq l$.  Thus we find that $\id_{\1_f}\otimes\big(\id_{\bb L_g}\boxtimes\id_{\bb L_k}\big)\otimes\id_{\1_h}=0$ unless $k=f^{-1}gh$.

    By general facts about Deligne products, simple objects in $\s C$ can be written as $\1_f\otimes(\bb L_g\boxtimes\bb L_k)$ for some $f,g,k\in\Gal(\bb L/\bb K)$.  Let us briefly denote $X_{f,g,h}=\1_f\otimes(\bb L_g\boxtimes\bb L_{f^{-1}gh})$.  The above computation implies that $X_{f,g,h}\cong X_{f,g,h}\otimes\1_h$.  This implies the tensor product rule
    \[X_{f,g,h}\otimes X_{i,j,k}\;=\;\delta_{h=i}\cdot X_{f,gj,k}\,.\]
    This rule shows that $\s C$ is indecomposable, with a diagonal summand $\mathcal C_{1,1}$ generated by the objects of the form $X_{1,g,1}$.  Indecomposable multi-fusion categories are Morita equivalent to any of their diagonal summands, and so it will suffice to show that $\s C_{1,1}\simeq\Vec_{\mathbb L}^{\omega_1\omega_2}\big(\Gal(\mathbb L/\mathbb K)\big)$.

    We can define a functor $\s C_{1,1}\to\Vec_{\mathbb L}^{\omega_1\omega_2}\big(\Gal(\mathbb L/\mathbb K)\big)$ by sending $X_{1,g,1}$ to $\bb L_{g}$, and having all the tensorators be identities.  The fusion rules, endomorphism algebras and left/right embeddings already match and so it remains to show that the associator is correct.  Direct computation shows that 
    \[\begin{tikzcd}[ampersand replacement=\&,column sep=45]
    	{(X_{1,g,1}\otimes X_{1,h,1})\otimes X_{1,k,1}} \&\& {X_{1,g,}\otimes(X_{1,h,1}\otimes X_{1,k,1})} \\
    	{X_{1,gh,1}\otimes X_{1,k,1}} \&\& { X_{1,g,1}\otimes X_{1,hk,1}} \\
    	{X_{1,ghk,1}} \&\& {X_{1,ghk,1}\;\;.}
    	\arrow["{\alpha_{ X_{1,g,1}, X_{1,h,1}, X_{1,k,1}}}", from=1-1, to=1-3]
    	\arrow[Rightarrow, no head, from=1-1, to=2-1]
    	\arrow[Rightarrow, no head, from=2-1, to=3-1]
    	\arrow[Rightarrow, no head, from=1-3, to=2-3]
    	\arrow[Rightarrow, no head, from=2-3, to=3-3]
    	\arrow["{\id_{\1_1}\otimes\big(\omega_1(g,h,k)\cdot\id_{\mathbb L_{ghk}}\boxtimes\omega_2(g,h,k)\cdot\id_{\mathbb L_{ghk}}\big)}"', from=3-1, to=3-3]
    \end{tikzcd}\]
    By applying Equation \ref{Eqn:DefOf1g}, the scalar $\omega_2(g,h,k)$ can be pushed to the left $\boxtimes$-factor to show that the coefficient of the associator is exactly $\omega_1(g,j,k)\omega_2(g,h,k)$, and this completes the proof.
\end{proof}

Our first main theorem now follows as a consequence of Lemma \ref{Lem:WellDefOnCohomology} and Proposition \ref{Prop:GrpHomomorphism}. 

\begin{theorem}\label{Thm:ExamplesAreInvertible}
    For any Galois $3$-cocycle $\omega \in Z^3(G;\bb L^\times)$, the multi-fusion category $\Vec_{\bb L}^\omega\big(\Gal(\bb L/\mathbb K)\big)$ is invertible. The assignment $\psi_{\bb L}^0:\omega \mapsto \Vec_{\bb L}^\omega\big(\Gal(\bb L/\mathbb K)\big)$ induces a group homomorphism on cohomology classes 
    $\psi_{\bb L}:H^3(\Gal(\bb L/\mathbb K);\bb L^\times) \rightarrow \Inv(\bb K)$.
\end{theorem}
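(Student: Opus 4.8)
The plan is to deduce both assertions directly from Lemma~\ref{Lem:WellDefOnCohomology}, Proposition~\ref{Prop:GrpHomomorphism}, and the fact that $\Vec_{\bb L}^{1}\big(\Gal(\bb L/\bb K)\big)$ is Morita trivial. Before that, I would record that each $\Vec_{\bb L}^\omega\big(\Gal(\bb L/\bb K)\big)$ really is a multi-fusion category over $\bb K$: as a $\bb K$-linear abelian rigid category it is literally $\bb L\Bim_{\bb K}$ of Example~\ref{Eg:LBimK}, which is finite, semisimple, locally separable over $\bb K$, and rigid; replacing the associator by a normalized $3$-cocycle changes none of these properties, and the pentagon axiom holds precisely because $\omega$ is a cocycle.

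For invertibility, apply Proposition~\ref{Prop:GrpHomomorphism} to the pair $(\omega,\omega^{-1})$, where $\omega^{-1}$ is the pointwise inverse cocycle. Since $\omega\cdot\omega^{-1}=1$, this produces a Morita equivalence
\[
\Vec_{\bb L}^{\omega}\big(\Gal(\bb L/\bb K)\big)\boxtimes\Vec_{\bb L}^{\omega^{-1}}\big(\Gal(\bb L/\bb K)\big)\;\sim\;\Vec_{\bb L}^{1}\big(\Gal(\bb L/\bb K)\big).
\]
The right-hand side is monoidally equivalent to $\bb L\Bim_{\bb K}={}_{\bb L}(\Vec_{\bb K})_{\bb L}$, which is the standard self-Morita-equivalence of $\Vec_{\bb K}$ through the separable algebra $\bb L$, and hence is Morita equivalent to $\Vec_{\bb K}$. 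Composing the two Morita equivalences and taking $\s D=\Vec_{\bb L}^{\omega^{-1}}\big(\Gal(\bb L/\bb K)\big)$ exhibits $\Vec_{\bb L}^{\omega}\big(\Gal(\bb L/\bb K)\big)$ as invertible in the sense of Definition~\ref{Def:Invertibility}.

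For the homomorphism statement: well-definedness on cohomology classes is exactly Lemma~\ref{Lem:WellDefOnCohomology}, since cohomologous cocycles give monoidally equivalent, hence Morita equivalent, categories, so they determine the same class in $\Inv(\bb K)$. That $\psi_{\bb L}$ intertwines the group structures is Proposition~\ref{Prop:GrpHomomorphism} read off: the product in $H^3\big(\Gal(\bb L/\bb K);\bb L^\times\big)$ is represented by $\omega_1\cdot\omega_2$, the product in $\Inv(\bb K)$ is $\boxtimes$, and Proposition~\ref{Prop:GrpHomomorphism} gives $\Vec_{\bb L}^{\omega_1}\big(\Gal(\bb L/\bb K)\big)\boxtimes\Vec_{\bb L}^{\omega_2}\big(\Gal(\bb L/\bb K)\big)\sim\Vec_{\bb L}^{\omega_1\omega_2}\big(\Gal(\bb L/\bb K)\big)$; the identity element maps to the identity because $\Vec_{\bb L}^{1}\big(\Gal(\bb L/\bb K)\big)$ is Morita trivial. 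Since all of the real work --- the explicit identification of the Deligne product, its faithful grading, and its diagonal summand --- is already contained in Proposition~\ref{Prop:GrpHomomorphism}, I do not expect a genuinely new obstacle here; the only point requiring care is the bookkeeping that the abstract inverse promised by Definition~\ref{Def:Invertibility} may be taken to be the concrete category $\Vec_{\bb L}^{\omega^{-1}}\big(\Gal(\bb L/\bb K)\big)$, and that the chain of Morita equivalences composes correctly.
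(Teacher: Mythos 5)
Your proof is correct and follows essentially the same route as the paper: invertibility is obtained by applying Proposition~\ref{Prop:GrpHomomorphism} to the pair $(\omega,\omega^{-1})$ together with the Morita triviality of $\Vec_{\bb L}^{1}\big(\Gal(\bb L/\bb K)\big)\simeq\bb L\Bim_{\bb K}$, and the homomorphism statement is exactly Lemma~\ref{Lem:WellDefOnCohomology} plus another application of Proposition~\ref{Prop:GrpHomomorphism}. Your preliminary check that $\Vec_{\bb L}^\omega\big(\Gal(\bb L/\bb K)\big)$ is genuinely multi-fusion is a reasonable extra remark that the paper folds into its definition of the category rather than into this proof.
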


\begin{proof}
    From Proposition \ref{Prop:GrpHomomorphism},
    $$\Vec_{\mathbb L}^\omega\big(\Gal(\mathbb L/\mathbb K)\big)\boxtimes\Vec_{\mathbb L}^{\omega^{-1}}\big(\Gal(\mathbb L/\mathbb K)\big)\sim\Vec_{\mathbb L}^1\big(\Gal(\mathbb L/\mathbb K)\big)\,.$$
    The latter category is just $\bb L\Bim_{\bb K}$, which is Morita equivalent to $\Vec_{\bb K}$.  Thus by Definition \ref{Def:Invertibility}, $\psi_{\bb L}^0(\omega)$ is an invertible fusion category, with inverse $\psi_{\bb L}^0(\omega^{-1})$.

    For any 2-cochain $\tau$, lemma \ref{Lem:WellDefOnCohomology} supplies a monoidal equivalence that can be used to construct a Morita equivalence $\psi_{\bb L}^0(\omega)\sim\psi_{\bb L}^0(\omega\cdot\delta(\tau))$.  Thus $[\psi_{\bb L}^0(-)]_{\sim}$ descends to a well-defined map $\psi_{\bb L}$ on cohomology.  Another application of Proposition \ref{Prop:GrpHomomorphism} establishes that this induced map is indeed a group homomorphism.
\end{proof}

\subsection{Categorical inflation}

In this subsection we introduce our key construction, categorical inflation.  Given a fusion category $\s C$ over a field $\mathbb K$ with $\Omega\s C=\mathbb E$ and a finite Galois extension $\mathbb L/\mathbb E$, categorical inflation replaces $\mathcal C$ with a Morita equivalent fusion category $\Infl_{\bb E}^{\bb L}(\mathcal C)$ (still over $\mathbb K$) whose endomorphism field is $\mathbb L$.  

We show that this operation allows us to reduce the Morita classification of invertible fusion categories to invertible \emph{pointed} fusion categories, \emph{i.e.} those for which every simple object is invertible.  This reduction to pointed categories allows for the extraction of cohomological invariants, and we call this categorical inflation because it categorifies inflation in Galois cohomology.

\begin{lemma}\label{Lem:IndecSummands}
    Let $\s C$ be a fusion category over $\bb{K}$ with $\Omega\s C = \bb{E}$, and suppose that $\bb L$ is a separable extension of $\bb{E}$, then $\Vec_\bb{L} \boxtimes \s C$ is a module category over $\s C$.  If $\bb E=\bb K(\theta)$, for some $\theta\in\bb E$ with minimal polynomial $f(X)$, then the indecomposable summands of $\Vec_\bb{L} \boxtimes \s C$ are in bijection with the indecomposable factors of $f(X)$ in $\mathbb L$.
\end{lemma}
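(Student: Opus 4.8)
The plan is to read off the indecomposable $\s C$-module summands of $\Vec_\bb{L}\boxtimes\s C$ from the idempotents of the commutative $\bb K$-algebra $B:=\End_\s C\big(\id_{\Vec_\bb{L}\boxtimes\s C}\big)$ of $\s C$-module natural endomorphisms of the identity functor, and then to identify $B$ with $\bb L\otimes_\bb K\bb E$. That $\Vec_\bb{L}\boxtimes\s C$ is a $\s C$-module category is immediate: $\s C$ acts on itself by $\otimes$, and this action is transported to the second tensorand of the Deligne product. Moreover both $\Vec_\bb{L}$ and $\s C$ are locally separable over $\bb K$ — here I would use that $\bb E=\Omega\s C$ is a separable field extension of $\bb K$ because $\s C$ is fusion, so that the tower $\bb K\subseteq\bb E\subseteq\bb L$ (the top step separable by hypothesis) makes $\bb L/\bb K$ separable — hence $\Vec_\bb{L}\boxtimes\s C$ is a genuine semisimple, locally separable $\bb K$-linear category on which the argument can proceed.

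Next I would recall the standard dictionary. For any semisimple $\s C$-module category $\s M$, the algebra $\End_\s C(\id_\s M)$ is finite-dimensional and commutative (commutativity follows from naturality: the naturality square of $\eta$ evaluated at $\theta_M$ gives $\eta_M\circ\theta_M=\theta_M\circ\eta_M$), and its primitive idempotents are in bijection with the indecomposable $\s C$-module summands of $\s M$. Indeed, a primitive idempotent $e\in\End_\s C(\id_\s M)$ splits every object of $\s M$ functorially and compatibly with the $\s C$-action, cutting out a $\s C$-module summand $e\s M$; this summand is indecomposable precisely because $e\,\End_\s C(\id_\s M)$ has no nontrivial idempotents, and orthogonality and completeness of the primitive idempotents yield $\s M=\bigoplus_e e\s M$. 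Thus the number of indecomposable summands equals the number of primitive idempotents of $\End_\s C(\id_\s M)$.

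The heart of the matter is the identification $B\cong\bb L\otimes_\bb K\bb E$. The monoidal unit of $\Vec_\bb{L}\boxtimes\s C$ is $\bb L\boxtimes\1$, whose endomorphism algebra in the na\"ive (hence Deligne) product is $\End_{\Vec_\bb{L}}(\bb L)\otimes_\bb K\End_\s C(\1)=\bb L\otimes_\bb K\bb E=\Omega(\Vec_\bb{L}\boxtimes\s C)$. The unitors identify $\bb L\boxtimes X\simeq(\bb L\boxtimes\1)\triangleleft X$ for all $X\in\s C$, and every object of $\Vec_\bb{L}\boxtimes\s C$ is a summand of a finite sum of such objects, so a $\s C$-module natural endomorphism $\eta$ of $\id$ is determined by its component $\eta_{\bb L\boxtimes\1}\in\Omega(\Vec_\bb{L}\boxtimes\s C)$. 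Conversely, the left embedding of Definition \ref{Def:Left&RightEmbeddings}, $a\mapsto(a\cdot\id)$, sends each $a\in\Omega(\Vec_\bb{L}\boxtimes\s C)$ to an endomorphism of $\id$ that is $\s C$-module natural, since $a\cdot\id_{Y\otimes Z}=(a\cdot\id_Y)\otimes\id_Z$ by the unit coherence axioms; these two assignments are mutually inverse algebra homomorphisms, giving $B\cong\bb L\otimes_\bb K\bb E$.

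Finally, writing $\bb E=\bb K(\theta)=\bb K[X]/(f)$, we obtain $B\cong\bb L\otimes_\bb K\bb K[X]/(f)\cong\bb L[X]/(f)$. Because $\bb E/\bb K$ is separable, $f$ is a separable polynomial, so over $\bb L$ it factors as $f=\prod_{i=1}^m f_i$ into \emph{distinct} monic irreducibles, and the Chinese Remainder Theorem gives $B\cong\prod_{i=1}^m\bb L[X]/(f_i)$, a product of $m$ fields. Hence $B$ has exactly $m$ primitive idempotents, and by the dictionary of the second paragraph $\Vec_\bb{L}\boxtimes\s C$ has exactly $m$ indecomposable $\s C$-module summands — one for each (indecomposable, i.e.\ irreducible) factor of $f$ in $\bb L[X]$. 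I expect the main obstacle to be the bookkeeping in $B\cong\bb L\otimes_\bb K\bb E$: one must check carefully that $\s C$-module naturality forces an endomorphism of $\id$ to arise from the left embedding of $\Omega(\Vec_\bb{L}\boxtimes\s C)$, and is not a merely $\bb K$-linear natural endomorphism coming from the a priori larger algebra $\End(\id_{\Vec_\bb{L}\boxtimes\s C})\cong\bb L\otimes_\bb K\prod_{[X]}Z\big(\End_\s C(X)\big)$.
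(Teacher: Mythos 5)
Your argument is correct, and it replaces the paper's mechanism for establishing indecomposability with a genuinely different one. Both proofs turn on the same algebra computation $\End(\bb L\boxtimes\1)\cong\bb L\otimes_{\bb K}\bb E\cong\prod_{i=1}^m\bb L[X]/(f_i)$, and both use its idempotents to produce the decomposition. The difference is in the second half: the paper cuts the unit object into simple summands $\1_i$, decomposes $\Vec_{\bb L}\boxtimes\s C$ by tensoring with the $\1_i$, and then proves indecomposability of each piece by hand, using rigidity (evaluation/coevaluation) and simplicity of $\1_i$ to connect any two simple objects of a given summand, invoking Proposition \ref{Prop:TFAEIndecModules}. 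You instead identify the whole algebra $B=\End_{\s C}(\id)$ of module-natural endomorphisms of the identity with $\bb L\otimes_{\bb K}\bb E$ and appeal to the general dictionary between primitive idempotents of $B$ and indecomposable module summands; since $B$ by construction sees \emph{every} module decomposition, primitivity of the idempotents does the work that the rigidity argument does in the paper. What your route buys is that the duality computation disappears; what it costs is the need to verify carefully that $B$ is no larger than $\bb L\otimes_{\bb K}\bb E$ (your determination-at-$\bb L\boxtimes\1$ argument, which is sound) and to set up the $\End_{\s C}(\id)$ formalism, which the paper never introduces. One small point of care: whether the left or the right embedding of $\Omega(\Vec_{\bb L}\boxtimes\s C)$ produces module-natural transformations depends on whether $\s C$ acts on the left or the right; your coherence identity $a\cdot\id_{Y\otimes Z}=(a\cdot\id_Y)\otimes\id_Z$ is the one matching a right action, and for a left action you would need the right embedding instead. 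Since the two embeddings genuinely differ in this non-split setting, it is worth fixing the convention explicitly, but this does not affect the validity of the argument.
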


\begin{proof}
    The module structure on $\Vec_{\bb L}\boxtimes\s C$ comes from the action of $\mathcal C$ on itself, with trivial action on $\Vec_{\bb L}$.   Suppose $f(X)$ factors as $f(X)=f_1(X)\cdots f_m(X)$ over $\mathbb L$.  We can compute that
    \[\End(\bb L\boxtimes\1)\cong\mathbb L\otimes_{\mathbb K}\mathbb E\cong\prod_{i=1}^m\frac{\mathbb L[X]}{\langle f_i(X)\rangle}\,.\]
    Each of the factors is a finite degree field extension of $\mathbb L$, and has a corresponding projection $p_i\in\End(\bb L\boxtimes\1)$.
    
    Since all relevant field extensions are separable, the categories we are considering are semisimple, and so the Deligne tensor product is modeled by Karoubi completion.  In other words, the projections $p_i$ correspond to the simple summands $\1_i$ of $\bb L\boxtimes\1$.  Simplicity of the summands corresponds to minimality of the projections, and the projections are minimal because the factors are fields.
    
    These projections provide a canonical decomposition of $\Vec_{\bb L}\boxtimes\s C$ into subcategories:
    \[\Vec_{\bb L}\boxtimes\s C\;=\;(\Vec_{\bb L}\boxtimes\s C)\otimes(\bb L\boxtimes\1)\;=\;\bigoplus_{i=1}^m(\Vec_{\bb L}\boxtimes\s C)\otimes \1_i\;.\]
    Note that the action of $\s C$ preserves each of the summands $(\Vec_{\bb L}\boxtimes\s C)\otimes \1_i$, and so this is a decomposition of module categories.
    
    Let $U,V$ be simple objects in $(\Vec_{\bb L}\boxtimes\s C)$.  By semisimplicity, these correspond to minimal projections $q_U\in\End(\bb L\boxtimes X)$ and $q_V\in\End(\bb L\boxtimes Y)$, for some simple objects $X$ and $Y$ in $\s C$.  The object $U$ is in the subcategory $(\Vec_{\bb L}\boxtimes\s C)\otimes \1_i$ if and only if $q_U\otimes p_i\neq0$.  By rigidity of $\s C$, this is equivalent to $p_i\circ(\id_{\bb L}\boxtimes\text{ev}_{X})\circ(\id_{\bb L\boxtimes X^*}\otimes q_U)\neq0$.  Similarly, $V$ is in $(\Vec_{\bb L}\boxtimes\s C)\otimes \1_i$ if and only if $(\id_{\bb L\boxtimes ^*Y}\otimes q_V)\circ(\id_{\bb L}\boxtimes\text{coev}_{^*Y})\circ p_i\neq0$.  Thus if $U$ and $V$ are both in $(\Vec_{\bb L}\boxtimes\s C)\otimes \1_i$, then the morphism
    \[(\id_{\bb L\boxtimes ^*Y}\otimes q_V)\circ(\id_{\bb L}\boxtimes\text{coev}_{^*Y})\circ p_i\circ(\id_{\bb L}\boxtimes\text{ev}_{X})\circ(\id_{\bb L\boxtimes X^*}\otimes q_U)\]
    is a composition of two nonzero morphisms
    \[\big(\bb L\boxtimes (X^*\otimes X)\big)\to \1_i\to \big(\bb L\boxtimes (^*Y\otimes Y)\big)\;.\]
    Since $\1_i$ is simple, this composition must be nonzero.  Using rigidity of $\s C$ again, this composition being nonzero is equivalent to there being a nonzero morphism $U\hookrightarrow(X\otimes ^*Y)\triangleright V$.  The existence of such nonzero morphisms implies that $(\Vec_{\bb L}\boxtimes\s C)\otimes \1_i$ is indecomposable by Proposition \ref{Prop:TFAEIndecModules}.
\end{proof}

This result is the primary tool that we use to investigate Morita equivalence.  We linger here momentarily to record some important corollaries.  

\begin{corollary}\label{Cor:CanonicalSummand}
    In the context of Lemma \ref{Lem:IndecSummands}, there is a canonical summand $\s M(\bb L/\bb E):=(\Vec_\bb{L} \boxtimes \s C)\otimes \1_1$ corresponding to the inclusion $\bb{E} \subseteq \bb{L}$, and $\End(\1_1)\cong\bb L$.
\end{corollary}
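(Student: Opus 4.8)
The plan is to read off the claim directly from the proof of Lemma~\ref{Lem:IndecSummands}, whose setup we are explicitly inheriting. Recall that in that proof we fixed a presentation $\bb E=\bb K(\theta)$ with minimal polynomial $f(X)$ over $\bb K$, factored $f(X)=f_1(X)\cdots f_m(X)$ into irreducibles over $\bb L$, and obtained a ring isomorphism
\[\End(\bb L\boxtimes\1)\;\cong\;\bb L\otimes_{\bb K}\bb E\;\cong\;\prod_{i=1}^m\frac{\bb L[X]}{\langle f_i(X)\rangle}\,,\]
with the $i$-th minimal idempotent $p_i$ cutting out the summand $\1_i$ and hence the indecomposable module summand $(\Vec_{\bb L}\boxtimes\s C)\otimes\1_i$. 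So there is essentially nothing to prove: I would simply single out one of these factors and name it.

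\medskip\noindent\textbf{Step 1.} Normalize the indexing. Since $\bb L$ is an extension of $\bb E$, the minimal polynomial $f(X)$ of $\theta$ over $\bb K$ has the minimal polynomial of $\theta$ over $\bb L$ as one of its irreducible factors over $\bb L$; but more to the point, because $\theta\in\bb E\subseteq\bb L$, the linear polynomial $X-\theta$ divides $f(X)$ in $\bb L[X]$. I would declare $f_1(X):=X-\theta$ to be the corresponding factor, which is legitimate after reordering. The factor $\bb L[X]/\langle f_1(X)\rangle=\bb L[X]/\langle X-\theta\rangle$ is then canonically isomorphic to $\bb L$ itself, via $X\mapsto\theta$. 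This is the ``canonical'' factor: it is the one on which the structure map $\bb E\to\bb L\otimes_{\bb K}\bb E$ followed by the $i$-th projection is exactly the inclusion $\bb E\hookrightarrow\bb L$, since that composite sends $\theta$ to $\theta$.

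\medskip\noindent\textbf{Step 2.} Transport this to the categorical side. Let $\1_1$ be the simple summand of $\bb L\boxtimes\1$ corresponding to the minimal idempotent $p_1\in\End(\bb L\boxtimes\1)$ attached to $f_1$, and set $\s M(\bb L/\bb E):=(\Vec_{\bb L}\boxtimes\s C)\otimes\1_1$. By Lemma~\ref{Lem:IndecSummands} this is one of the indecomposable $\s C$-module summands of $\Vec_{\bb L}\boxtimes\s C$. Finally, $\End(\1_1)$ is the local ring $p_1\End(\bb L\boxtimes\1)p_1$, which is exactly the field factor $\bb L[X]/\langle f_1(X)\rangle\cong\bb L$ identified in Step~1; this gives $\End(\1_1)\cong\bb L$ as claimed, and the isomorphism is canonical once the inclusion $\bb E\subseteq\bb L$ is fixed, independent of the choice of primitive element $\theta$ (two primitive elements give the same factor of $\bb L\otimes_{\bb K}\bb E$, namely the one corepresenting the $\bb E$-algebra structure map to $\bb L$).

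\medskip There is no real obstacle here; the only thing requiring a word of care is the assertion of canonicity, i.e.\ that the distinguished summand does not depend on the auxiliary presentation $\bb E=\bb K(\theta)$. The clean way to phrase this — which I would include as a remark rather than belabor — is that $\bb L\otimes_{\bb K}\bb E$, as an $\bb E$-algebra, has a canonical $\bb L$-point coming from the inclusion $\bb E\subseteq\bb L$ (the multiplication-type map $\bb L\otimes_{\bb K}\bb E\to\bb L$, $x\otimes e\mapsto x\cdot e$), and $p_1$ is precisely the idempotent whose image is the corresponding local factor; this description makes no reference to $\theta$. Everything else is a direct transcription of the notation already established in Lemma~\ref{Lem:IndecSummands}.
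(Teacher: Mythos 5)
Your proof is correct and follows essentially the same route as the paper: identify the linear factor $X-\theta$ of $f(X)$ over $\bb L$, let $\1_1$ be the corresponding summand, and read off $\End(\1_1)\cong\bb L[X]/\langle X-\theta\rangle\cong\bb L$ from the decomposition in Lemma \ref{Lem:IndecSummands}. Your added remark that the distinguished factor is the one corepresenting the multiplication map $\bb L\otimes_{\bb K}\bb E\to\bb L$, and hence is independent of the choice of primitive element, is a worthwhile clarification of the word ``canonical'' that the paper leaves implicit.
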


\begin{proof}
    Since $\theta\in\bb E\subseteq\bb L$, one of the factors of $f(X)$ over $\bb L$ is the linear term $(X-\theta)$, and $\bb L[X]/(X-\theta)\cong\bb L$.  Without loss of generality, we can assume that this factor corresponds to the first summand $\1_1$ of $\bb L\boxtimes\1$ as described in Lemma \ref{Lem:IndecSummands}.
\end{proof}

\begin{corollary}\label{Cor:NiceCaseIndecSummands}
    In the context of Lemma \ref{Lem:IndecSummands}, if $\bb L$ contains the normal closure of $\bb E/\bb K$, then summands of $\Vec_\bb{L} \boxtimes \s C$ are in bijection with embeddings $\bb E \hookrightarrow \bb L$.
\end{corollary}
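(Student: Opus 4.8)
The plan is to deduce this from Lemma \ref{Lem:IndecSummands} together with basic separable field theory, by identifying the indecomposable factors of $f(X)$ over $\bb L$ with the roots of $f$ in $\bb L$. First I would recall that since $\bb E/\bb K$ is separable (as $\s C$ is locally separable, $\Omega\s C = \bb E$ is a separable extension of $\bb K$), the minimal polynomial $f(X)$ of a primitive element $\theta$ with $\bb E = \bb K(\theta)$ is separable, hence has $[\bb E:\bb K] = \deg f$ distinct roots in a separable closure. By hypothesis $\bb L$ contains the normal closure $\widetilde{\bb E}$ of $\bb E/\bb K$; since $f$ splits completely over $\widetilde{\bb E}$ (the normal closure contains all conjugates of $\theta$), $f$ also splits into distinct linear factors over $\bb L$. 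Therefore the indecomposable factors of $f(X)$ over $\bb L$ are exactly the $\deg f = [\bb E:\bb K]$ linear factors $(X - \theta_i)$, one for each root $\theta_i \in \bb L$ of $f$.

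Next I would set up the bijection with field embeddings. Each root $\theta_i \in \bb L$ of $f$ determines a $\bb K$-algebra homomorphism $\bb E = \bb K[X]/\langle f(X)\rangle \to \bb L$ sending $\theta \mapsto \theta_i$, and this is injective since $\bb E$ is a field; conversely every $\bb K$-embedding $\iota : \bb E \hookrightarrow \bb L$ sends $\theta$ to a root of $f$ in $\bb L$, and is determined by that root. Hence roots of $f$ in $\bb L$ are in canonical bijection with embeddings $\bb E \hookrightarrow \bb L$. Combining this with the previous paragraph, the linear factors of $f$ over $\bb L$ biject with embeddings $\bb E \hookrightarrow \bb L$. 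Finally, invoking Lemma \ref{Lem:IndecSummands} (whose hypotheses are met, since $\bb L$ separable over $\bb E$ — indeed $\bb L \supseteq \bb E$ and $\bb L/\bb K$ is separable, so $\bb L/\bb E$ is separable), the indecomposable summands of $\Vec_{\bb L}\boxtimes\s C$ biject with the indecomposable factors of $f(X)$ over $\bb L$, which we have just identified with embeddings $\bb E \hookrightarrow \bb L$. Composing these bijections gives the claim.

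There is essentially no serious obstacle here; the content is entirely the standard fact that a primitive element of a separable extension has, over any field containing the normal closure, exactly as many roots as the degree, and that these roots parametrize the embeddings. The one point deserving a sentence of care is to confirm that the hypothesis ``$\bb L$ separable over $\bb E$'' of Lemma \ref{Lem:IndecSummands} is satisfied in the setting of the corollary --- this holds because $\bb E \subseteq \bb L$ with $\bb L/\bb K$ separable forces $\bb L/\bb E$ separable --- and to note that the canonical summand $\s M(\bb L/\bb E)$ from Corollary \ref{Cor:CanonicalSummand} corresponds under this bijection to the inclusion $\bb E \subseteq \bb L$.
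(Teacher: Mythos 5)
Your proof is correct and follows essentially the same route as the paper: the paper likewise observes that $f$ splits into distinct linear factors $(X-\theta_i)$ over $\bb L$ (the $\theta_i$ being the Galois conjugates of $\theta$), computes $\End(\bb L\boxtimes\1)\cong\bb L\otimes_{\bb K}\bb E\cong\prod_i\bb L[X]/\langle X-\theta_i\rangle$, and identifies the factors with roots, hence with embeddings $\bb E\hookrightarrow\bb L$. Your extra check that $\bb L/\bb E$ is separable is harmless but unnecessary, since that is already a standing hypothesis of Lemma \ref{Lem:IndecSummands}, whose context the corollary assumes.
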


\begin{proof}
    By assumption, $f(X)$ factors as $f(X)=\prod_i^m(X-\theta_i)$ over $\mathbb L$, where the $\theta_i$ are the Galois conjugates of $\theta=\theta_1$ in $\mathbb L$.  We can now compute that
    \[\End(\bb L\boxtimes\1)\cong\mathbb L\otimes_{\mathbb K}\mathbb E\cong\prod_{i=1}^m\frac{\mathbb L[X]}{\langle (X-\theta_i)\rangle}\cong\prod_{i=1}^m\mathbb L\,.\qedhere\]
\end{proof}

\begin{lemma}\label{Lem:MoritaEquivalence}
    The category $\s M(\bb L/\bb E)$ is a Morita equivalence between $\s C$ and $\1_1\otimes(\bb L\Bim_{\bb K}\boxtimes\s C)\otimes\1_1$
\end{lemma}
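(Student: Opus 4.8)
The plan is to exhibit $\s M(\bb L/\bb E)$ as an invertible bimodule by combining Lemma \ref{Lem:IndecSummands} with the double-commutant criterion of Lemma \ref{Lem:FaithfulAndCommutant}. First I would observe that $\Vec_{\bb L}\boxtimes\s C$ carries commuting actions: on the left by $\bb L\Bim_{\bb K}\boxtimes\s C$ (acting on the $\Vec_{\bb L}=\,{}_{\bb L}(\Vec_{\bb K})_{\bb L}$ factor by the standard $\bb L$-bimodule action and on $\s C$ by left multiplication), and on the right by $\s C$ (acting on the $\s C$ factor by right multiplication, trivially on $\Vec_{\bb L}$), exactly as in the proof of Lemma \ref{Lem:IndecSummands}. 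These two actions manifestly commute. The idempotent $\1_1\in\End(\bb L\boxtimes\1)$ is central for the right $\s C$-action (it lives over $\End$ of the unit, which is central), so cutting down by $\1_1$ on both sides produces a bimodule $\s M(\bb L/\bb E)$ over $\1_1\otimes(\bb L\Bim_{\bb K}\boxtimes\s C)\otimes\1_1$ on the left and over $\s C$ on the right.

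Next I would verify the two hypotheses of Lemma \ref{Lem:FaithfulAndCommutant}, taking $\s D=\s C$ (so $\s D^{\mathrm{mp}}$ acts on the right) and the left multi-fusion category to be $\s A:=\1_1\otimes(\bb L\Bim_{\bb K}\boxtimes\s C)\otimes\1_1$. For faithfulness of $\s M(\bb L/\bb E)$ as a right $\s C$-module category: since $\Vec_{\bb L}\boxtimes\s C$ is faithful over $\s C$ (the copy $\bb L\boxtimes(-)$ already reproduces a faithful copy of $\s C$), and by Lemma \ref{Lem:IndecSummands} the summand $\s M(\bb L/\bb E)$ is indecomposable and contains objects $\bb L\boxtimes X$ for every simple $X$ via the canonical summand $\1_1$ of Corollary \ref{Cor:CanonicalSummand} — indeed $\1_1$ is the summand on which $\bb L$ acts by the identity embedding, so $\bb L\boxtimes X$ lands (up to a simple summand) in $\s M(\bb L/\bb E)$ for each simple $X\in\s C$ — faithfulness over $\s C$ follows. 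For the commutant condition: I would compute $\End_{\s C}(\Vec_{\bb L}\boxtimes\s C)$, the category of right $\s C$-module endofunctors. Because $\s C_{\s C}$ has endomorphism category $\s C^{\mathrm{mp}}$ (acting by left multiplication) and $\Vec_{\bb L}$ contributes $\End_{\Vec_{\bb K}}(\Vec_{\bb L})\simeq\bb L\Bim_{\bb K}$, one gets $\End_{\s C}(\Vec_{\bb L}\boxtimes\s C)\simeq\bb L\Bim_{\bb K}\boxtimes\s C^{\mathrm{mp}}$; restricting module endofunctors that preserve the summand $\s M(\bb L/\bb E)$ gives $\End_{\s C}(\s M(\bb L/\bb E))\simeq\1_1\otimes(\bb L\Bim_{\bb K}\boxtimes\s C^{\mathrm{mp}})\otimes\1_1 = \s A^{\mathrm{mp}}$, and this identification is precisely the action functor $R:\s C^{\mathrm{mp}}\to\End_{\s A}(\s M)$ read the other way. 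Unwinding, the action functor $\s A^{\mathrm{mp}}=(\,\1_1\otimes(\bb L\Bim_{\bb K}\boxtimes\s C)\otimes\1_1)^{\mathrm{mp}}\to\End_{\s C}(\s M(\bb L/\bb E))$ is an equivalence. Then Lemma \ref{Lem:FaithfulAndCommutant} yields that $\s M(\bb L/\bb E)$ is a Morita equivalence between $\s C$ and $\s A$.

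I expect the main obstacle to be the bookkeeping in the commutant computation: identifying $\End_{\s C}(\Vec_{\bb L}\boxtimes\s C)$ with $\bb L\Bim_{\bb K}\boxtimes\s C^{\mathrm{mp}}$ requires being careful that the relevant endofunctor category of $\Vec_{\bb L}$ over $\Vec_{\bb K}$ really is $\bb L\Bim_{\bb K}$ (which is Example \ref{Eg:LBimK} together with the fact that $\End$ of the regular module category of a multi-fusion category is its monoidal opposite), and then checking that cutting down by $\1_1$ on both sides is compatible with passing to the commutant, i.e.\ that $\End_{\s C}(\1_1\otimes(\Vec_{\bb L}\boxtimes\s C))\simeq\1_1\otimes\End_{\s C}(\Vec_{\bb L}\boxtimes\s C)\otimes\1_1$. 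This last point is a standard ``corner algebra'' manipulation but deserves an explicit sentence. Everything else — commutation of the two actions, faithfulness, centrality of $\1_1$ for the right action — is routine given the setup already established.
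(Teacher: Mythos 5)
Your proposal is correct in substance but organizes the argument differently from the paper. The paper's proof is a composition of two invertible bimodules: it first notes that $\Vec_{\bb L}\boxtimes\s C$ is a Morita equivalence between $\s C$ and $\bb L\Bim_{\bb K}\boxtimes\s C$ (by tensoring the basic equivalence $\Vec_{\bb L}\colon \Vec_{\bb K}\sim\bb L\Bim_{\bb K}$ with the identity bimodule of $\s C$), and then composes with the corner bimodule $(\bb L\Bim_{\bb K}\boxtimes\s C)\otimes\1_1$, whose dual category is the diagonal corner $\1_1\otimes(\bb L\Bim_{\bb K}\boxtimes\s C)\otimes\1_1$. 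You instead verify the double-commutant criterion directly on the composite $\s M(\bb L/\bb E)$; the two steps of your commutant computation (first $\End(\Vec_{\bb L}\boxtimes\s C)_{\s C}\simeq\bb L\Bim_{\bb K}\boxtimes\s C$, then the corner manipulation for $\1_1$) are exactly the two bimodules the paper composes, so the mathematical content is the same. What your route buys is that it avoids having to justify separately that the corner bimodule in step two is invertible (a faithfulness issue for the multi-fusion category $\bb L\Bim_{\bb K}\boxtimes\s C$ that the paper glosses over); what it costs is the explicit corner-commutant compatibility, which you correctly flag.

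Two bookkeeping points. First, you state you are verifying the hypotheses of Lemma \ref{Lem:FaithfulAndCommutant} with left category $\s A$ and right category $\s C$, but you then check faithfulness of the \emph{right} $\s C$-action and compute the commutant of the \emph{right} action; that is the mirror of the cited lemma, i.e.\ condition (v) of Proposition \ref{Prop:MoritaTFAE(Fusion)}. This is harmless here because both $\s A$ (whose unit $\1_1$ is simple by Corollary \ref{Cor:CanonicalSummand}) and $\s C$ are fusion, so that proposition applies and faithfulness is automatic, but you should cite the condition you actually verify. Second, your $\mathrm{mp}$'s are misplaced: with the paper's conventions the right-$\s C$-module endofunctors of $\s C_{\s C}$ form $\s C$ (since $L_X\circ L_Y=L_{X\otimes Y}$), not $\s C^{\mathrm{mp}}$, so the commutant of the right $\s C$-action on $\s M(\bb L/\bb E)$ is $\s A$ itself, not $\s A^{\mathrm{mp}}$. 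This matters: the lemma asserts a Morita equivalence with $\s A$, and $\s A^{\mathrm{mp}}$ is in general the Morita \emph{inverse} of $\s A$, not something Morita equivalent to it. The correct computation lands on $\s A$, so the error is in your tracking of conventions rather than in the mathematics.
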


\begin{proof}
    For any finite degree field extension $\mathbb L/\mathbb K$, the category $\Vec_{\mathbb L}$ is a Morita equivalence between $\Vec_{\mathbb K}$ and $\mathbb L\Bim_{\mathbb K}$.  It follows that if $\mathcal C$ is any fusion category over $\mathbb K$, then $\Vec_{\mathbb L}\boxtimes\mathcal C$ is a Morita equivalence between $\mathcal C$ and $\mathbb L\Bim_{\bb K}\boxtimes\mathcal C$.
    
    In general, $\bb L\Bim_{\bb K}\boxtimes \s C$ is multi-fusion, with $\bb L\boxtimes\1$ decomposing as in Lemma \ref{Lem:IndecSummands}.  The object $\1_1$ is an algebra in $\bb L\Bim_{\bb K}\boxtimes \s C$, and the corresponding category of right modules is $(\bb L\Bim_{\bb K}\boxtimes \s C)\otimes\1_1$.  The module dual of $\bb L\Bim_{\bb K}\boxtimes \s C$ with respect to this module category is the category of $\1_1$ bimodules, which is equivalent to the diagonal subcategory $\1_1\otimes(\bb L\Bim_{\bb K}\boxtimes \s C)\otimes\1_1$.  The category $\s M(\bb L/\bb E)$ is simply the composition of the two invertible bimodules $\Vec_{\bb L}\boxtimes\s C$, and $(\bb L\Bim_{\bb K}\boxtimes \s C)\otimes\1_1$.
\end{proof}

\begin{definition} \label{Def:CategoricalInflation}
    Suppose that $\s{C}$ is a fusion category over $\bb K$, and let $\mathbb E=\Omega{\mathcal C}$. Suppose that $\bb L/\bb E$ is a separable extension.  The categorical inflation of $\s C $ along this extension is
    \[\Infl_{\bb E}^{\bb L}(\s C) := \1_1\otimes(\bb L\Bim_{\bb K}\boxtimes\s C)\otimes\1_1\;,\]
    following the notation of Lemma \ref{Lem:MoritaEquivalence}.
\end{definition}

Note that $\Infl_{\bb E}^{\bb L}(\s C)$ is still only $\bb K$-linear, so this is an operation on fusion categories over a fixed field $\bb K$ and not a kind of base extension.

\begin{corollary}\label{Cor:PropertiesOfInfl}
    The endomorphism field of $\Infl_{\bb E}^{\bb L}(\s C)$ is $\bb L$, and the category $\s M(\bb L/\bb E)$ is a Morita equivalence between $\s C$ and $\Infl_{\bb E}^{\bb L}(\s C)$.
\end{corollary}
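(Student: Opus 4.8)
The plan is to read off both assertions from results already in hand, chiefly Lemma~\ref{Lem:MoritaEquivalence} and Corollary~\ref{Cor:CanonicalSummand}; the only point requiring an argument is the identification of the endomorphism field.

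I would first dispose of the Morita equivalence statement, which is immediate. By Definition~\ref{Def:CategoricalInflation}, $\Infl_{\bb E}^{\bb L}(\s C) = \1_1 \otimes (\bb L\Bim_{\bb K} \boxtimes \s C) \otimes \1_1$ verbatim, and Lemma~\ref{Lem:MoritaEquivalence} says precisely that $\s M(\bb L/\bb E)$ is a Morita equivalence between $\s C$ and this category. What makes $\s M(\bb L/\bb E)$, and hence $\Infl_{\bb E}^{\bb L}(\s C)$, well defined is that $\1_1$ is the canonical summand of $\bb L \boxtimes \1$ singled out in Corollary~\ref{Cor:CanonicalSummand}: the one attached to the linear factor $X-\theta$ of the minimal polynomial of a primitive element $\theta$ of $\bb E/\bb K$.

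For the endomorphism field, I would present $\Infl_{\bb E}^{\bb L}(\s C)$ as the category ${}_{\1_1}(\bb L\Bim_{\bb K} \boxtimes \s C)_{\1_1}$ of $\1_1$-bimodules, equivalently as the diagonal $(1,1)$-component of the multi-fusion category $\bb L\Bim_{\bb K}\boxtimes\s C$; its monoidal unit is the regular bimodule $\1_1$. By the computation $\End(\bb L\boxtimes\1) \cong \bb L \otimes_{\bb K} \bb E \cong \prod_i \bb L[X]/\langle f_i(X)\rangle$ from Lemma~\ref{Lem:IndecSummands}, the summand $\1_1$ corresponding to the linear factor has $\End(\1_1) \cong \bb L$, a field; this is exactly Corollary~\ref{Cor:CanonicalSummand}. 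It then remains to check that this endomorphism algebra does not change upon passing to $\1_1$-bimodules. Here one uses that $\1_1$ is a summand of the monoidal unit: the left and right $\1_1$-module structures on the regular bimodule $\1_1$ are induced by the unitors of $\bb L\Bim_{\bb K}\boxtimes\s C$, so by naturality of those unitors every endomorphism of $\1_1$ automatically intertwines both structures, giving $\End_{\Infl}(\1_1) \cong \End(\1_1) \cong \bb L$. In particular $\1_1$ is simple in $\Infl_{\bb E}^{\bb L}(\s C)$, so this category is fusion with $\Omega\,\Infl_{\bb E}^{\bb L}(\s C) \cong \bb L$.

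The only step that is not purely formal is this last one --- confirming that forming the corner (equivalently, passing to $\1_1$-bimodules) neither enlarges nor shrinks $\End(\1_1)$. I expect it to be routine precisely because $\1_1$ is a summand of the unit, so that the relevant relative tensor products over $\1_1$ collapse and naturality of the unitors does all the work; but it is the one place where the distinction between ``endomorphisms of $\1_1$ as an object'' and ``endomorphisms of $\1_1$ as a bimodule'' must be addressed explicitly rather than glossed over.
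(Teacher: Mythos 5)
Your proposal is correct and follows exactly the route the paper intends: the corollary is stated without proof precisely because it is the conjunction of Lemma~\ref{Lem:MoritaEquivalence} (the Morita equivalence) and Corollary~\ref{Cor:CanonicalSummand} (that $\End(\1_1)\cong\bb L$). Your extra care in checking that passing to the corner category $\1_1\otimes(\bb L\Bim_{\bb K}\boxtimes\s C)\otimes\1_1$ does not change $\End(\1_1)$ is a reasonable (if slightly more than necessary) precaution, and your resolution via $\1_1$ being a summand of the unit is sound.
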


\subsection{Categorical and cohomological inflation}

What happens to categories of the form $\Vec_{\mathbb E}^\omega\big(\Gal(\mathbb E/\mathbb K)\big)$ when we perform the categorical inflation to a larger field?

\begin{theorem}\label{Thm:CategorifiesInflation}
    Suppose that $\bb{L}\supseteq\bb{E}\supseteq\bb{K}$ is a tower of Galois extensions, then 
    \[ \Infl^{\bb L}_{\bb E}\Big(\Vec_{\mathbb E}^\omega\big(\Gal(\mathbb E/\mathbb K)\big)\Big) \mathop{\simeq}\limits^{\otimes} \Vec_{\mathbb L}^{\infl_{\bb E}^{\bb L}(\omega)}\left(\Gal(\mathbb L/\mathbb K)\right)\,,\]
    where $\infl_{\bb E}^{\bb L}(\omega)$ is cohomological inflation as in Definition \ref{Def:ClassicalInflation}.
\end{theorem}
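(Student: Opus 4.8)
The plan is to imitate, in a relative setting, the computation carried out in the proof of Proposition~\ref{Prop:GrpHomomorphism}, applied to $\bb L\Bim_{\bb K}\boxtimes\s C$ with $\s C=\Vec_{\bb E}^\omega\big(\Gal(\bb E/\bb K)\big)$, and then to pass to the diagonal summand cut out by $\1_1$. Write $\Gamma=\Gal(\bb L/\bb K)$ and $G=\Gal(\bb E/\bb K)$, let $\pi:\Gamma\twoheadrightarrow G$ be restriction, and let $\iota_0:\bb E\hookrightarrow\bb L$ be the inclusion; on the level of cocycles $\infl^{\bb L}_{\bb E}(\omega)(\gamma_1,\gamma_2,\gamma_3)=\iota_0\big(\omega(\pi\gamma_1,\pi\gamma_2,\pi\gamma_3)\big)$, so the assertion is exactly that categorical inflation realizes precomposition with $\pi$. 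By Definition~\ref{Def:CategoricalInflation} everything reduces to understanding the monoidal structure on $\1_1\otimes(\bb L\Bim_{\bb K}\boxtimes\s C)\otimes\1_1$; note that twisting the associator of $\s C$ by $\omega$ changes neither the underlying category $\bb L\Bim_{\bb K}\boxtimes\bb E\Bim_{\bb K}$ nor its decomposition into indecomposable summands (Lemma~\ref{Lem:IndecSummands}), so only that monoidal structure is at issue.

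First I would record the idempotent bookkeeping. We have $\End(\bb L\boxtimes\1)\cong\bb L\otimes_{\bb K}\bb E\cong\prod_{g\in G}\bb L$, giving orthogonal idempotents $\{\1_g\}_{g\in G}$, with $\1_1$ the one attached to $\iota_0$ (Corollary~\ref{Cor:CanonicalSummand}). The relative analogue of Equation~\ref{Eqn:DefOf1g} is that, for objects $X$ of $\bb L\Bim_{\bb K}$ and $Y$ of $\s C$ and any $e\in\bb E=\Omega\s C$,
\[\id_{\1_g}\otimes\big(\id_X\boxtimes\,e\cdot\id_Y\big)\;=\;\id_{\1_g}\otimes\big(\iota_0(g(e))\cdot\id_X\boxtimes\id_Y\big)\,,\]
which is immediate from the description of the $g$-th factor of $\bb L\otimes_{\bb K}\bb E$ as $l\otimes e\mapsto l\cdot\iota_0(g(e))$.

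Next I would run the four-line computation of Proposition~\ref{Prop:GrpHomomorphism} with this substitution to find, for a simple $\bb L_\gamma\boxtimes\bb E_g$ and $f,h\in G$, that $\1_f\otimes(\bb L_\gamma\boxtimes\bb E_g)\otimes\1_h$ vanishes unless $g=f^{-1}\pi(\gamma)h$. Specializing to $f=h=1$ shows the simple objects of $\Infl^{\bb L}_{\bb E}(\s C)$ are precisely $Y_\gamma:=\1_1\otimes(\bb L_\gamma\boxtimes\bb E_{\pi\gamma})\otimes\1_1$ for $\gamma\in\Gamma$; each simple here has endomorphism field $\bb L$ (being a simple of a full subcategory of the Karoubi completion of $\bb L\Bim_{\bb K}\boxtimes\s C$, every simple of which has endomorphism field $\bb L$), and the left/right embeddings of $\bb L=\Omega\Infl^{\bb L}_{\bb E}(\s C)$ into $\End(Y_\gamma)$ differ by $\gamma$, so the $Y_\gamma$ are pairwise non-isomorphic and there is exactly one in each graded component of the Galois grading of Proposition~\ref{Prop:GaloisGrading}. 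The same bookkeeping that gives $X_{f,g,h}\otimes X_{i,j,k}=\delta_{h=i}X_{f,gj,k}$ in Proposition~\ref{Prop:GrpHomomorphism} now gives $Y_{\gamma_1}\otimes Y_{\gamma_2}\cong Y_{\gamma_1\gamma_2}$, so the fusion rules are those of $\Gamma$. It remains to check the associator of the candidate functor $Y_\gamma\mapsto\bb L_\gamma$ with identity tensorators: the associator of $(\bb L_{\gamma_1}\boxtimes\bb E_{\pi\gamma_1})\otimes(\bb L_{\gamma_2}\boxtimes\bb E_{\pi\gamma_2})\otimes(\bb L_{\gamma_3}\boxtimes\bb E_{\pi\gamma_3})$ in $\bb L\Bim_{\bb K}\boxtimes\s C$ is $\id\boxtimes\big(\omega(\pi\gamma_1,\pi\gamma_2,\pi\gamma_3)\cdot\id\big)$ with $\omega(\pi\gamma_1,\pi\gamma_2,\pi\gamma_3)\in\bb E$, and sandwiching with $\1_1$ and invoking the displayed identity above pushes this $\bb E$-scalar onto the $\bb L\Bim_{\bb K}$-factor as $\iota_0\big(\omega(\pi\gamma_1,\pi\gamma_2,\pi\gamma_3)\big)=\infl^{\bb L}_{\bb E}(\omega)(\gamma_1,\gamma_2,\gamma_3)$, as required. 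Lemma~\ref{Lem:WellDefOnCohomology} then lets us pass from cocycles to cohomology classes.

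The main obstacle I anticipate is not conceptual but consistency of conventions: the choice of which of the left/right embeddings is called $g_X$ for simples of $\bb L\Bim_{\bb K}$ and of $\s C$ propagates into whether $g_{Y_\gamma}$ is $\gamma$ or $\gamma^{-1}$, whether the relevant twist is $\omega$ or $\omega^{-1}$, and whether $\1_g$ is governed by $\iota_0\circ g$ or $\iota_0\circ g^{-1}$; these must be pinned down compatibly so that the three cocycles ($\omega$ on $\s C$, $\infl^{\bb L}_{\bb E}(\omega)$ on the target, and the scalar extracted from the associator) literally coincide rather than merely agreeing up to sign or inversion. A secondary technical point is to justify cleanly that the diagonal summand is genuinely fusion with exactly the simples $\{Y_\gamma\}$ and $\Omega=\bb L$; here Lemma~\ref{Lem:IndecSummands}, Corollary~\ref{Cor:CanonicalSummand}, and Corollary~\ref{Cor:PropertiesOfInfl} carry the load, together with the observation that twisting by $\omega$ leaves all of the underlying linear-categorical structure untouched.
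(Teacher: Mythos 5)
Your proposal is correct and follows essentially the same route as the paper's proof: identify the surviving simples $\1_1\otimes(\bb L_\gamma\boxtimes\bb E_g)\otimes\1_1$ via the scalar-pushing computation (showing vanishing unless $g=\pi(\gamma)$), match endomorphism fields and fusion rules to those of $\bb L\Bim_{\bb K}$, and read off the associator as the inflated cocycle by transporting the $\bb E$-scalar across the $\boxtimes$. Your explicit use of $\iota_0$ to move the scalar onto the $\bb L$-factor is a slightly more careful rendering of the paper's terser "computed directly" step, and your convention caveats are reasonable but do not affect the argument.
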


\begin{proof}
    Set $\s C=\Vec_{\mathbb E}^\omega\big(\Gal(\mathbb E/\mathbb K)\big)$, and let $G=\Gal(\mathbb E/\mathbb K)$ and $\Gamma=\Gal(\mathbb L/\mathbb K)$, so that there is a surjective homomorphism $q:\Gamma\twoheadrightarrow G$.
    
    The simple objects of $\Infl_{\bb E}^{\bb L}(\s C)$ are of the form $\1_1\otimes(\mathbb L_{\gamma}\boxtimes \bb E_g)\otimes\1_1$, where $\gamma\in\Gamma$, and $\1_1$ is the summand of $\1$ corresponding to embedding $\mathbb E\subseteq\mathbb L$ as in Lemma \ref{Lem:IndecSummands}.

    Similarly to the proof of Proposition \ref{Prop:GrpHomomorphism}, for any nonzero $e\in\bb E$, we can compute that

    \begin{align*}
        &\id_{\1_1}\otimes\big(\id_{\bb L_\gamma}\boxtimes\id_{\bb E_g}\big)\otimes\id_{\1_1}\\
        &=\id_{\1_1}\otimes\big(\id_{\bb L_\gamma}\boxtimes\id_{\bb E_g}\cdot ee^{-1}\big)\otimes\id_{\1_1}\\
        &=\id_{\1_1}\otimes\big(\id_{\bb L_\gamma}\cdot e\boxtimes\id_{\bb E_g}\cdot e^{-1}\big)\otimes\id_{\1_1}\\
        &=\id_{\1_1}\otimes\big(\gamma(e)\cdot\id_{\bb L_\gamma}\boxtimes\id_{\bb E_g}\cdot e^{-1}\big)\otimes\id_{\1_1}\\
        &=\id_{\1_1}\otimes\big(\id_{\bb L_\gamma}\boxtimes\gamma(e)\cdot\id_{\bb E_g}\cdot e^{-1}\big)\otimes\id_{\1_1}\\
        &=\id_{\1_1}\otimes\big(\id_{\bb L_\gamma}\boxtimes\id_{\bb E_g}\cdot g^{-1}\gamma(e)e^{-1}\big)\otimes\id_{\1_1}\,.
    \end{align*}

    A visual representation of this computation in figure \ref{fig:1}.

    \begin{figure}
        \centering
        \includegraphics[width=\textwidth]{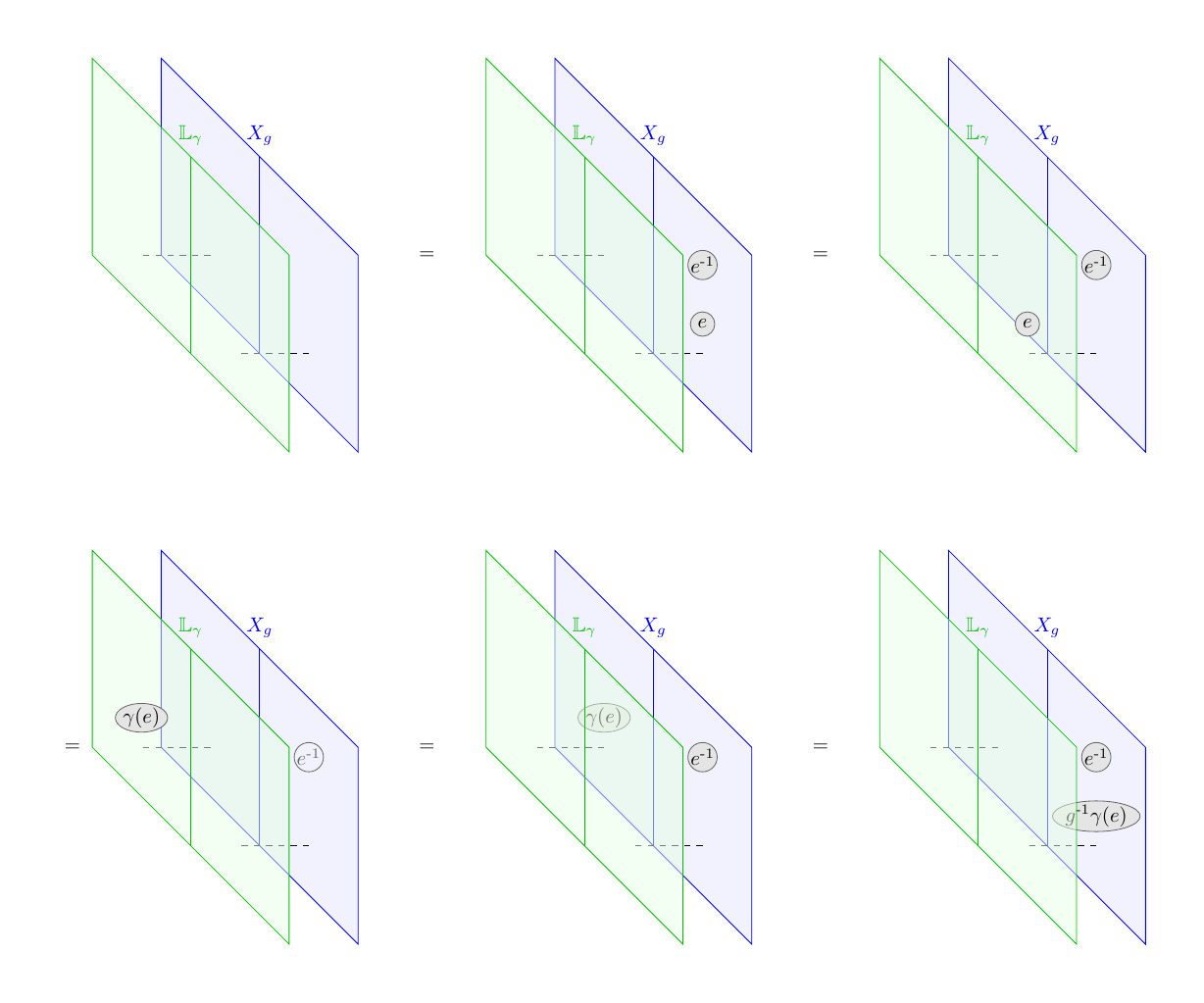}
        \caption{Diagrammatic description of the proof of \ref{Thm:CategorifiesInflation}}
        \label{fig:1}
    \end{figure}
    
    Observe that if $\gamma|_{\mathbb E}\neq g$, then there is at least one morphism $e\in\mathbb E$ such that $b=g^{-1}\gamma(e)e^{-1}\neq1$.  The above computation shows that $\id_{\1_1}\otimes(\id_{\mathbb L_{\gamma}}\boxtimes\id_{\bb E_g}\cdot(b-1))\otimes\id_{\1_1}=0$, and this forces $\1_1\otimes(\mathbb L_{\gamma}\boxtimes \bb E_g)\otimes\1_1=0$.  Thus, the only objects that survive compression by $\1_1$ are those $\mathbb L_\gamma\boxtimes \bb E_g$ for which $q(\gamma)=\gamma|_{\bb E}=g$, so we find that the simple objects in $\Infl_{\bb E}^{\bb L}(\s C)$ are indexed by $\Gamma$.

    The simple objects all have $\bb L$ as their endomorphism algebra, and their fusion rules match that of $\bb L\Bim_{\bb K}$, so it remains to compute the associator.  Let $\omega\in Z^3(G;\mathbb E^\times)$ be the cocycle determined by the associator in $\mathcal C$.  Since the only interesting scalar in the associator appears on the right $\boxtimes$-factor, the cocycle $\omega'\in Z^3(\Gamma;\bb L^\times)$ can be computed directly to be
    \[\omega'(\gamma_1,\gamma_2,\gamma_3)=\omega\big(\gamma_1|_{\bb E},\gamma_2|_{\bb E},\gamma_3|_{\bb E}\big)\,.\]
    This construction is precisely the cocycle description of cohomological inflation (see Definition \ref{Def:ClassicalInflation}), so $\omega'=\infl_{\bb E}^{\bb L}(\omega)$ as desired.
\end{proof}

\begin{corollary}\label{cor:well-defined}
    The collection of group homomorphisms $$\Big\{\psi_{\bb L}:H^3\big(\Gal(\bb L/\bb K);\bb L^\times) \rightarrow \Inv(\bb K)\Big\}_{\bb L/\bb K\,\text{ Galois}}$$ descends to map out of the colimit $\Psi: H^3(\bb K;\bb G_m) \rightarrow \Inv(\bb K)$.
\end{corollary}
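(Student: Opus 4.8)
The plan is to invoke the universal property of the filtered colimit. Recall from Proposition \ref{Prop:AbsoluteCohomologyIsAColimit} that $H^3(\bb K;\bb G_m)$ is the colimit of the directed system $\big\{H^3\big(\Gal(\bb L/\bb K);\bb L^\times\big)\big\}$, indexed by finite Galois extensions $\bb L/\bb K$ (directed because any two are contained in a compositum, which is again finite Galois over $\bb K$), with transition maps given by cohomological inflation $\infl_{\bb E}^{\bb L}$ whenever $\bb L\supseteq\bb E$. To produce a map $\Psi$ out of this colimit it therefore suffices to check that the family $\{\psi_{\bb L}\}$ forms a cocone, \emph{i.e.} that for every tower $\bb L\supseteq\bb E\supseteq\bb K$ of Galois extensions the triangle
\[ \psi_{\bb L}\circ\infl_{\bb E}^{\bb L}\;=\;\psi_{\bb E} \]
commutes as maps $H^3\big(\Gal(\bb E/\bb K);\bb E^\times\big)\to\Inv(\bb K)$.

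First I would fix such a tower and a class $\omega\in H^3\big(\Gal(\bb E/\bb K);\bb E^\times\big)$ (using Lemma \ref{Lem:WellDefOnCohomology} to move freely between cocycles and cohomology classes). Unwinding definitions, $\psi_{\bb E}(\omega)$ is the Morita class of $\Vec_{\bb E}^\omega\big(\Gal(\bb E/\bb K)\big)$ while $\psi_{\bb L}\big(\infl_{\bb E}^{\bb L}(\omega)\big)$ is the Morita class of $\Vec_{\bb L}^{\infl_{\bb E}^{\bb L}(\omega)}\big(\Gal(\bb L/\bb K)\big)$. Theorem \ref{Thm:CategorifiesInflation} identifies the latter category, up to monoidal equivalence, with the categorical inflation $\Infl_{\bb E}^{\bb L}\big(\Vec_{\bb E}^\omega(\Gal(\bb E/\bb K))\big)$, and Corollary \ref{Cor:PropertiesOfInfl} (via the Morita equivalence $\s M(\bb L/\bb E)$) says that categorical inflation leaves the Morita class unchanged. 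Chaining these two facts yields $\Vec_{\bb E}^\omega\big(\Gal(\bb E/\bb K)\big)\sim\Vec_{\bb L}^{\infl_{\bb E}^{\bb L}(\omega)}\big(\Gal(\bb L/\bb K)\big)$, which is precisely the desired equality of images in $\Inv(\bb K)$.

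With the cocone condition in hand, the universal property of the colimit produces a unique map $\Psi\colon H^3(\bb K;\bb G_m)\to\Inv(\bb K)$ compatible with the structure maps, and it is a group homomorphism: each $\psi_{\bb L}$ is one by Theorem \ref{Thm:ExamplesAreInvertible}, the transition maps are homomorphisms, and any two elements of the colimit are represented over a common finite Galois extension $\bb F$ on which $\psi_{\bb F}$ is additive, so $\Psi$ respects the group law. The only point requiring a word of care is that every field in the construction is a \emph{finite} extension, so that $\bb L\Bim_{\bb K}$ and the categorical inflations really are (multi-)fusion categories over $\bb K$; this is built into the indexing poset of the colimit, so there is no genuine obstacle here — all the substance of the corollary is carried by Theorem \ref{Thm:CategorifiesInflation} together with Corollary \ref{Cor:PropertiesOfInfl}.
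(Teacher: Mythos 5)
Your proposal is correct and follows essentially the same route as the paper: verify the cocone condition $\psi_{\bb L}\circ\infl_{\bb E}^{\bb L}=\psi_{\bb E}$ by combining Theorem \ref{Thm:CategorifiesInflation} (categorical inflation realizes cohomological inflation) with Corollary \ref{Cor:PropertiesOfInfl} (categorical inflation preserves the Morita class), then invoke the universal property of the colimit from Proposition \ref{Prop:AbsoluteCohomologyIsAColimit}. The extra remarks on the group-homomorphism property and finiteness of the extensions are fine but not needed beyond what the paper records.
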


\begin{proof}
    Let $\omega\in H^3\big(\Gal(\bb E/\bb K);\bb E^\times\big)$, and suppose $\bb L\supseteq\bb E\supseteq\bb K$ is a tower of Galois extensions.  Our results so far combine to give us the following sequence of equalities inside of $\Inv(\bb K)$
    \begin{align*}
        \psi_{\bb L}\big(\infl_{\bb E}^{\bb L}(\omega)\big)&:=\Big[\psi_{\bb L}^0\big(\infl_{\bb E}^{\bb L}(\omega)\big)\Big]_\sim\\
        &\mathop{=}\limits^{\ref{Thm:CategorifiesInflation}}\Big[\Infl_{\bb E}^{\bb L}\big(\psi_{\bb E}^0(\omega)\big)\Big]_\sim\\
        &\mathop{=}\limits^{\ref{Cor:PropertiesOfInfl}}\big[\psi_{\bb E}^0(\omega)\big]_\sim\;=\;\psi_{\bb E}(\omega)\,.
    \end{align*}
    This can be interpreted as saying that the collection $\{\psi_{\bb L}\}$ forms a cocone under the directed system in Proposition \ref{Prop:AbsoluteCohomologyIsAColimit}, and so the universal property of the colimit provides the assembled map.
\end{proof}

\section{Splitting fields} \label{Sec:Splitting}

\begin{definition}\label{Def:SplittingFieldOfACategory}
    Suppose that $\mathcal C$ is a multi-fusion category over $\bb K$.  We call a Galois extension $\bb L/\bb K$ a splitting field for $\mathcal C$ if it splits all the division algebras $\End(X)$ in the sense that $\bb L\otimes_{\bb K}\End(X)$ is a product of matrix algebras \emph{over $\bb L$}.
\end{definition}

Note that if $\bb L\otimes_{\bb K}\End(X)$ is a product of matrix algebras over field extensions of $\bb L$ we do not consider that a splitting field. For example, the splitting field of $\Vec_{\bb C}$ (thought of as a fusion category over $\bb R$) is $\mathbb{C}$ because $\bb R\otimes_{\bb R}\mathbb{C}$ is not a matrix algebra over $\mathbb{R}$, while $\mathbb{C} \otimes_{\bb R} \mathbb{C} \cong \mathbb{C} \oplus \mathbb{C}$ \emph{is} a product of matrix algebras over $\mathbb{C}$.

\begin{lemma}\label{Lem:SplittingFieldsExist}
    Every multi-fusion category $\s C$ over $\bb K$ admits a splitting field.
\end{lemma}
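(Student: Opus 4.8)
The plan is to reduce to the finitely many endomorphism division algebras attached to simple objects, dispose of each one using classical Brauer theory, and then take a compositum. First I would reduce to simples. Since $\s C$ is finite and semisimple, fix representatives $X_1,\dots,X_n$ of the isomorphism classes of simple objects and put $D_i:=\End(X_i)$. Because non-isomorphic simple objects admit no nonzero morphisms between them, every object decomposes as $X\cong\bigoplus_i X_i^{\oplus m_i}$ with $\End(X)\cong\prod_i M_{m_i}(D_i)$, and hence $\bb L\otimes_{\bb K}\End(X)\cong\prod_i M_{m_i}\!\big(\bb L\otimes_{\bb K}D_i\big)$ for any field extension $\bb L/\bb K$. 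A matrix algebra over a finite product of matrix algebras over $\bb L$ is again such a product, so it suffices to produce one finite Galois extension $\bb L/\bb K$ for which each $\bb L\otimes_{\bb K}D_i$ is a product of matrix algebras over $\bb L$.

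Next I would analyze a single $D_i$. By Schur's lemma it is a finite dimensional division algebra over $\bb K$, and local separability says it is a separable $\bb K$-algebra; this forces its center $Z_i:=Z(D_i)$ to be a finite \emph{separable} field extension of $\bb K$, with $D_i$ a central simple algebra over $Z_i$. By the standard structure theory of central simple algebras, $D_i$ contains a maximal subfield $F_i$ with $F_i/Z_i$ finite separable, and $F_i$ splits $D_i$ over $Z_i$; since $Z_i/\bb K$ is separable, $F_i/\bb K$ is separable as well. Let $N_i$ be the Galois closure of $F_i/\bb K$ inside $\bb K^{\mathrm{sep}}$, a finite Galois extension of $\bb K$.

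Now I would check that $N_i$ splits $D_i$ in the sense of Definition \ref{Def:SplittingFieldOfACategory}. Writing $N_i\otimes_{\bb K}D_i\cong(N_i\otimes_{\bb K}Z_i)\otimes_{Z_i}D_i$ and using that $N_i/\bb K$ is Galois and contains $Z_i$, separability of $Z_i/\bb K$ gives $N_i\otimes_{\bb K}Z_i\cong\prod_{\sigma}N_i$, the product running over the $\bb K$-embeddings $\sigma\colon Z_i\hookrightarrow N_i$; hence $N_i\otimes_{\bb K}D_i\cong\prod_{\sigma}\big(N_i\otimes_{Z_i,\sigma}D_i\big)$. For each $\sigma$, normality of $N_i/\bb K$ puts $\sigma(F_i)$ inside $N_i$, and $\sigma(F_i)$ splits the central simple $\sigma(Z_i)$-algebra $\sigma_*D_i$; since a splitting field stays splitting after any field extension, $N_i$ splits $\sigma_*D_i$, i.e.\ $N_i\otimes_{Z_i,\sigma}D_i\cong M_{r}(N_i)$. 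So $N_i\otimes_{\bb K}D_i$ is a product of matrix algebras over $N_i$.

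Finally, let $\bb L$ be the compositum of $N_1,\dots,N_n$ in $\bb K^{\mathrm{sep}}$; it is a finite Galois extension of $\bb K$, and running the previous paragraph with $N_i$ replaced by $\bb L$ (using $\bb L\supseteq N_i\supseteq\sigma(F_i)$) shows $\bb L\otimes_{\bb K}D_i$ is a product of matrix algebras over $\bb L$ for every $i$. By the reduction in the first paragraph, $\bb L$ is a splitting field for $\s C$. The only genuine input beyond bookkeeping is the maximal-separable-subfield theorem for central simple algebras; I expect the main place where care is needed is the passage through the several embeddings $\sigma\colon Z_i\hookrightarrow\bb L$, which is exactly the subtlety flagged in the discussion following Definition \ref{Def:SplittingFieldOfACategory}.
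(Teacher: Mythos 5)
Your proof is correct and follows essentially the same strategy as the paper's: produce a splitting field for each $\End(X)$ over its (separable) center, pass to a finite Galois extension of $\bb K$ containing all of them, and use the decomposition $\bb L\otimes_{\bb K}Z(\End(X))\cong\prod_\sigma\bb L$ to exhibit $\bb L\otimes_{\bb K}\End(X)$ as a product of matrix algebras over $\bb L$. The only cosmetic difference is that you build $\bb L$ as a compositum of Galois closures of maximal separable subfields rather than as the normal closure of a field factor of $\bigotimes_X\bb F_X$, and your explicit handling of the several embeddings $\sigma\colon Z_i\hookrightarrow\bb L$ spells out a point the paper's final computation leaves implicit.
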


\begin{proof}
    Since $\s C$ is multi-fusion, for every simple object $X$, the division algebra $\End(X)$ and hence also the field $Z\big(\End(X)\big)$ are separable algebras over $\bb K$ (See Definition \ref{Def:multi-fusion}).  For each simple object $X$, let $\bb F_X$ be a splitting field for $\End(X)$ in the sense that
    \[\bb F_X\otimes_{Z(\End(X))}\End(X)\cong M_{n_X}(\bb F_X)\,.\]
    
    Taking the tensor product rel $\bb K$ of all the $\bb F_X$, we can produce the algebra
    \[A\,=\bigotimes_{X\in\mathrm{Irr}(\s C)}\bb F_X\,.\]
    This algebra is necessarily a (cartesian) product of separable field extensions of every $\bb F_X$.  Let $\bb L_0$ be any of these field factors of $A$, and define $\bb L$ to be the normal closure of $\bb L_0/\bb K$.  It follows that $\bb L$ contains the normal closure of  $Z\big(\End(X)\big)/\bb K$ for every simple object $X$, and will also split every $\End(X)$.

    We can now compute that
    \begin{align*}
        \mathbb L\otimes_{\mathbb K}\End(X)&\cong\mathbb L\otimes_{\mathbb K}Z\big(\End(X)\big)\otimes_{Z(\End(X))}\End(X)\\
        &\cong\left(\prod_{i=1}^m\mathbb L\right)\otimes_{Z(\End(X))}\End(X)\\
        &\cong\prod_{i=1}^m M_{n_i}(\mathbb L).
    \end{align*}
    Since this works for every simple object $X$, $\bb L$ is a splitting field for $\s C$.
\end{proof}

Now suppose that $\mathcal C$ is invertible.  Using categorical inflation with respect to a Galois extension $\bb L/\bb K$ allows us to alter the category without altering the Morita equivalence class.  However if $\bb L$ is generic, there is no guarantee that the inflated category will be easier to work with than $\s C$ itself.  The following theorem combines the running themes of this section by considering inflation when $\bb L$ is a splitting field for an invertible category $\s C$.

\begin{theorem}\label{Thm:InflationAlongASplittingField}
    Let $\s C$ be an invertible fusion category over $\bb K$ with $\Omega\s C=\bb E$.  If $\bb L$ is a splitting field for $\s C$, then
    \[\Infl_{\bb E}^{\bb L}(\s C)\mathop{\simeq}\limits^{\otimes}\Vec_{\bb L}^{\omega}\big(\Gal(\bb L/\bb K)\big)\;,\]
    for some Galois cocycle $\omega$.   In particular, any invertible $\s C$ is Morita equivalent to a category of the form $\Vec_{\mathbb L}^\omega\big(\Gal(\mathbb L/\mathbb K)\big)$.
\end{theorem}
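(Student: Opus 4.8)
The plan is to perform categorical inflation along the splitting field $\bb L$ and then recognize the result as a Galois-twisted pointed category, by feeding the Galois grading of Proposition~\ref{Prop:GaloisGrading} into the triviality of the center forced by invertibility (Theorem~\ref{Thm:InvertibleIFFZ(C)=Vec}). First I would set up: since $\bb L$ splits the \emph{field} $\bb E=\Omega\s C$ and $\bb E$ is commutative, $\bb L\otimes_{\bb K}\bb E$ is a product of copies of $\bb L$, so $\bb E$ embeds into $\bb L$, and as $\bb L/\bb K$ is normal we may arrange $\bb E\subseteq\bb L$ with $\bb L/\bb E$ separable. Then $\s D:=\Infl_{\bb E}^{\bb L}(\s C)$ is defined and, by Corollary~\ref{Cor:PropertiesOfInfl}, is Morita equivalent to $\s C$ (hence invertible) with $\Omega\s D\cong\bb L$; this reduces the ``in particular'' clause to the displayed equivalence.

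Next I would Galois-grade $\s D$. By Lemma~\ref{Lem:MoritaEquivalence} each simple of $\s D$ is a compression $\1_1\otimes Y\otimes\1_1$ of a simple $Y$ of $\bb L\Bim_{\bb K}\boxtimes\s C$; and $\End(\bb L_{\gamma}\boxtimes X)\cong\bb L\otimes_{\bb K}\End(X)$ is a product of matrix algebras over $\bb L$ by the splitting hypothesis, so every such $Y$ has endomorphism algebra $\bb L$, while the compression sends each simple to $0$ or back to itself; hence every simple of $\s D$ has endomorphism algebra exactly $\bb L$. Proposition~\ref{Prop:GaloisGrading} then gives a monoidal grading $\s D=\bigoplus_{g\in G}\s D_g$ with $G=\Gal(\bb L/\bb K)$. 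Let $H\subseteq G$ be its support. An element $e\in\bb L=\End_{\s D}(\1)$ lifts to the center $\s Z(\s D)$ precisely when $e\cdot\id_X=\id_X\cdot e$ for all $X$, i.e.\ when $e\in\bb L^{H}$; but invertibility and Theorem~\ref{Thm:InvertibleIFFZ(C)=Vec} give $\s Z(\s D)\simeq\Vec_{\bb K}$, so $\bb L^{H}=\bb K$ and hence $H=G$ --- the grading is faithful.

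The crux is identifying the trivial component: I claim $\s D_1\simeq\Vec_{\bb L}$. Base change along $\bb K\subseteq\bb K^{\text{sep}}$ is monoidal for $\boxtimes$ and preserves Morita equivalence, so $\s D_{\bb K^{\text{sep}}}$ is invertible over $\bb K^{\text{sep}}$, hence $\s D_{\bb K^{\text{sep}}}\simeq M_n(\Vec_{\bb K^{\text{sep}}})$ with $n=\dim_{\bb K^{\text{sep}}}(\bb L\otimes_{\bb K}\bb K^{\text{sep}})=[\bb L:\bb K]=|G|$ by Example~\ref{Eg:Closed=>Inv(k)=1}. The tensor subcategories of $M_n(\Vec_{\bb K^{\text{sep}}})$ containing the unit are the block-diagonal $\prod_j M_{n_j}(\Vec_{\bb K^{\text{sep}}})$ indexed by partitions $\sum_j n_j=n$ (the set of pairs $(i,j)$ with $F_{ij}$ in the subcategory is an equivalence relation), so $(\s D_1)_{\bb K^{\text{sep}}}$ has one indecomposable factor per part. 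On the other hand $\s D_1$ is split fusion over $\bb L$, so $(\s D_1)_{\bb K^{\text{sep}}}\cong\prod_{g\in G}\big(\s D_1\otimes_{\bb L,g}\bb K^{\text{sep}}\big)$ is a product of exactly $|G|$ indecomposable fusion categories over $\bb K^{\text{sep}}$. Matching forces the partition to have $|G|=n$ parts, so every $n_j=1$, whence $\s D_1\otimes_{\bb L,g}\bb K^{\text{sep}}\simeq\Vec_{\bb K^{\text{sep}}}$ for all $g$ and, $\s D_1$ being split over $\bb L$, $\s D_1\simeq\Vec_{\bb L}$.

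Finally I would assemble. With $\s D_1=\Vec_{\bb L}$, each $\s D_g$ is an invertible $\Vec_{\bb L}$-bimodule all of whose simples are split over $\bb L$, so $\End_{\Vec_{\bb L}}(\s D_g)\simeq M_r(\Vec_{\bb L})$ for $r$ the number of simples of $\s D_g$; Lemma~\ref{Lem:FaithfulAndCommutant} forces this to be $\simeq\Vec_{\bb L}^{\mathrm{mp}}$, so $r=1$. Thus $\s D$ is faithfully $G$-graded with rank-$1$ components, with simple generators $\delta_g$ ($\delta_1=\1$) satisfying $\delta_g\otimes\delta_h\cong\delta_{gh}$, $\End(\delta_g)\cong\bb L$, and $\id_{\delta_g}\cdot l=g(l)\cdot\id_{\delta_g}$ for $l\in\bb L$ --- exactly the underlying data of $\bb L\Bim_{\bb K}$. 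Choosing isomorphisms $\delta_g\otimes\delta_h\cong\delta_{gh}$ turns the associator into scalars $\omega(g,h,k)\in\bb L^{\times}$ whose pentagon identity is, by the same bookkeeping used in Proposition~\ref{Prop:GrpHomomorphism} and Theorem~\ref{Thm:CategorifiesInflation}, exactly the Galois $3$-cocycle condition; hence $\omega\in Z^3(G;\bb L^{\times})$ and the identity-on-objects functor with the evident tensor structure gives $\s D\simeq\Vec_{\bb L}^{\omega}(\Gal(\bb L/\bb K))$. I expect the main obstacle to be the identification $\s D_1\simeq\Vec_{\bb L}$: faithfulness of the grading falls out cheaply from the center, but controlling the trivial component seems to genuinely require passing to $\bb K^{\text{sep}}$ and matching the grading and the $\bb L$-splitness of $\s D_1$ against the classification of invertible multi-fusion categories over a separably closed field.
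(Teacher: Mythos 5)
Your proof is correct and follows the paper's overall strategy: inflate along the splitting field, observe that every simple of $\s D=\Infl_{\bb E}^{\bb L}(\s C)$ acquires endomorphism algebra $\bb L$ so that Proposition \ref{Prop:GaloisGrading} applies, deduce faithfulness of the grading from $\s Z(\s D)\simeq\Vec_{\bb K}$, and show each component has a single invertible simple. The one step where you genuinely diverge is the identification $\s D_1\simeq\Vec_{\bb L}$: the paper gets this from the forgetful functor $\Vec_{\bb K}\simeq\s Z(\s D)\to\s D$ together with a theorem cited from the first author's thesis, whereas you base-change to $\bb K^{\text{sep}}$, use that the only invertible multi-fusion categories there are $M_n(\Vec_{\bb K^{\text{sep}}})$ with $n=[\bb L:\bb K]$, classify the unital tensor subcategories of $M_n(\Vec)$ as block-diagonal ones via the equivalence-relation argument, and match against the $|G|$-fold product decomposition of $(\s D_1)_{\bb K^{\text{sep}}}$ forced by $\bb L$-linearity of $\s D_1$. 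This is a nice self-contained alternative that replaces the external citation with a counting argument; its only cost is that you must make sense of $\boxtimes_{\bb K}$ with $\Vec_{\bb K^{\text{sep}}}$ for the possibly infinite extension $\bb K^{\text{sep}}/\bb K$ (a colimit over finite subextensions, which the paper uses informally only in Section \ref{Sec:spectral}). Two small points to tighten: you assert $\delta_g\otimes\delta_h\cong\delta_{gh}$ without ruling out higher multiplicity --- the computation $\bb L\cong\Hom(X_g,X_g)\cong\Hom(X_g^*\otimes X_g,\1)$ shows $X_g^*\otimes X_g\cong\1$, so each $X_g$ is invertible and multiplicity one follows; and your step $\bb L^{H}=\bb K\Rightarrow H=G$ needs the (easy, but worth stating) fact that the support $H$ of a monoidal grading of a fusion category is a subgroup.
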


\begin{proof}
    Let $\s C$ be as above, and let $\bb L$ be a splitting field for $\s C$, which exists by Lemma \ref{Lem:SplittingFieldsExist}.  Let us denote $\s D=\Infl_{\bb E}^{\bb L}(\s C)$. Our goal is to show that $\s D$ is faithfully Galois-graded and pointed, with a unique simple object in each grading.  Taken together, these properties imply that $\mathcal D\simeq\Vec_{\mathbb L}^\omega\big(\Gal(\mathbb L/\mathbb K)\big)$ for some cocycle $\omega\in Z^3\big(\Gal(\bb L/\bb K);\bb L^\times\big)$.

    First we show that $\s D$ is Galois graded using Proposition \ref{Prop:GaloisGrading}, namely we want to compute the endomorphism algebras of each simple.
    All of the simple objects in $\bb L\Bim_{\bb K}$ are of the form $\bb L_g$ for some $g\in\Gal(\bb L/\bb K)$ (See Example \ref{Eg:LBimK}).  The endomorphism algebras of tensors of two simple objects must be
    \[\End(\mathbb L_g\boxtimes X)\;\cong\;\mathbb L\otimes_{\mathbb K}\End(X)\;\cong\;\prod_{i=1}^m M_{n_i}(\mathbb L)\,.\]
    This computation shows that the object $\mathbb L_g\boxtimes X$ of the multi-fusion category $\mathbb L\Bim_{\bb K}\boxtimes\mathcal C$ decomposes into m distinct simple subobjects, with the $i^\text{th}$ simple object occurring with multiplicity $n_i$.  More importantly, this also shows that each of these simple summands has endomorphism algebra $\mathbb L$.  Since $\s D$ is a subcategory of $\mathbb L\Bim\boxtimes\mathcal C$, this property holds for $\s D$ as well. Since all the endomorphism algebras of $\s D$ are $\bb L$, it follows from Proposition \ref{Prop:GaloisGrading} that $\s D$ is Galois graded. 
    
    We now aim to show that this grading is \emph{faithful} (\emph{i.e.} $\mathcal D_{g}\neq0$ for every $g\in\Gal(\mathbb L/\mathbb K)$, see Subsection \ref{Sec:GradedFusion}) by considering the Drinfeld center. Since $\s D$ and $\s C$ are Morita equivalent and $\s C$ is invertible, it follows that $\s D$ is also invertible.  By Theorem \ref{Thm:InvertibleIFFZ(C)=Vec} we find that $\s Z(\s D)\simeq\Vec_{\mathbb K}$ .  By \cite[Prop 4.1.5]{sanfordThesis}, the endomorphism field $\Omega\mathcal Z(\mathcal D)$ is given by
    \[\End(\1_{\mathcal Z(\mathcal D)})=\big\{l\in \mathbb L~|~\forall X\in\mathcal D \text{ simple, }l\cdot\id_X=\id_X\cdot l\big\}\,.\]
    Thus $\Omega\s Z(\s C)=\mathbb K$ must be the fixed field of the subgroup of $\Gal(\mathbb L/\mathbb K)$ generated by all those $g$ for which $\mathcal D_g\neq0$.  Since $\mathbb K$ is clearly the fixed field of the whole group, we conclude that the $g$ for which $\mathcal D_g\neq0$ must generate all of $\Gal(\mathbb L/\mathbb K)$.  Since $\mathcal D$ is fusion and the grading is monoidal, we can conclude that the grading is faithful.

    Finally we analyze each of the graded components and show that they each have a single simple object which is invertible.
    
    For any fusion category $\mathcal D$ that is faithfully graded by a group $G$, Proposition \ref{Prop:GradedComponentsInvertible} tells us that each of the components $\mathcal D_g$ is an invertible bimodule category for $\mathcal D_1$. 
    
    The forgetful functor $F:\mathcal \Vec_{\bb K} \simeq Z(\mathcal{D})\to\mathcal D$ is monoidal, and so $\1_{\mathcal D}$ must be the only simple object in the essential image of $F$, since the source category has rank 1.  By \cite[Thm 4.1.4]{sanfordThesis} it follows that $\mathcal D_1\simeq\Vec_{\mathbb L}$.
    
    By combining these observations, we find that for each $g\in\Gal(\mathbb L/\mathbb K)$, the graded component $\mathcal D_g$ is a $\mathbb K$-linear invertible bimodule category for $\Vec_{\mathbb L}$.  Since invertible bimodules between fusion categories are indecomposable as modules, it follows from Example \ref{Eg:IndecVecModules} that all invertible bimodule categories have rank 1.  Thus we find that each $\mathcal D_g$ has a unique simple object, say $X_g$.

    Since $\1$ is the only simple object in $\mathcal D_1$, we have that $X_g^* \otimes X_g \cong \1^{\oplus d}$, but since $\mathbb L\cong\Hom(X_g,X_g)\cong\Hom(X_g^*\otimes X_g,\1)$ we have that $d=1$. Hence $X_g$ is an invertible object for every $g$, and so $\mathcal D$ is a pointed fusion category.
\end{proof}

\section{Obstruction to Morita triviality} \label{sec:Obstruction}

We wish to determine when invertible categories are Morita trivial.  By Theorem \ref{Thm:InflationAlongASplittingField}, this is equivalent to determining when categories of the form $\mathcal C=\Vec_{\bb L}^\omega\big(\Gal(\bb L/\mathbb K)\big)$ are Morita trivial.  We have seen in Theorem \ref{Thm:ExamplesAreInvertible} that a sufficient condition is that $\omega=1$, but the exact condition is more subtle.

We begin with a definition first considered by Teichmuller in \cite{teichmuller}.

\begin{definition}
    Let $G=\Gal(\bb L/\mathbb K)$ be the Galois group of a Galois extension, and let $\mathbb D$ be an $\bb L$-central simple algebra.  $\mathbb D$ is said to be $G$-normal if for every $g\in G$, there exists a $\mathbb K$-algebra isomorphism $\Tilde{g}:\mathbb D\to\mathbb D$ such that $\Tilde{g}|_{\bb L}=g$.
\end{definition}

Let us write $\bb L_g$ for the $\bb L$-bimodule whose right action is twisted by $g\in\Gal(\bb L/\bb K)$.

\begin{proposition}\label{Prop:G-ActsOnBr(L)}
    Let $G=\Gal(\bb L/\mathbb K)$ be the Galois group of a Galois extension, and let $\mathbb D$ be an $\bb L$-central simple algebra.  The algebra $\bb L_g\otimes_{\bb L}\bb D$ is also an $\bb L$-central simple algebra, and the assignment $g_*[\bb D]:=[\bb L_g\otimes_{\bb L}\bb D]$ defines an action of $\Gal(\bb L/\bb K)$ on $\Br(\bb L)$.
\end{proposition}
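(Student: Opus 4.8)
The plan is to recognise $\bb L_g\otimes_\bb L\bb D$ as nothing more exotic than the Galois twist of $\bb D$ --- the scalar extension of $\bb D$ along the automorphism $g\colon\bb L\to\bb L$ --- so that the proposition becomes the classical statement that the Galois group acts on the Brauer group. Write $\iota\colon\bb L\hookrightarrow\bb D$ for the structure map. Since $\bb L$ is central in $\bb D$, left and right $\bb L$-multiplication on $\bb D$ coincide, and one checks by a direct computation on the defining relations of the balanced tensor product that the assignment $x\otimes d\mapsto\iota\!\left(g^{-1}(x)\right)d$ is a well-defined $\bb K$-linear isomorphism from $\bb L_g\otimes_\bb L\bb D$ onto the $\bb K$-vector space $\bb D$: it is visibly surjective, and the two sides have the same $\bb K$-dimension because $\bb L_g$ is free of rank one as a right $\bb L$-module. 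Transporting the multiplication of $\bb D$ along this isomorphism equips $M:=\bb L_g\otimes_\bb L\bb D$ with the structure of a $\bb K$-algebra, and under the identification the left $\bb L$-action that $M$ inherits from $\bb L_g$ becomes the structure map $\lambda\mapsto\iota\!\left(g^{-1}(\lambda)\right)$; that is, $M\cong{}^{g}\bb D$, the ring $\bb D$ regarded as an $\bb L$-algebra via $\iota\circ g^{-1}$.

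Given this identification the first assertion is immediate: $M$ is finite-dimensional and simple because $\bb D$ is, and $M$ is central over $\bb L$ because the image of the structure map $\bb L\to M$ is still the whole center of the underlying ring (as $g$ is bijective). The only point that takes any care is precisely this identification --- in particular, keeping track of which of the two $\bb L$-actions on the one-sided tensor $\bb L_g\otimes_\bb L\bb D$ serves as the algebra structure map --- and it is exactly the centrality of $\bb L$ in $\bb D$ that lets a bimodule-paired-with-an-algebra become an algebra here. I expect this bookkeeping to be the only mild obstacle; everything else is routine.

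It remains to verify the action axioms, which are standard Brauer-group manipulations. The twist commutes with matrices, $\bb L_g\otimes_\bb L M_n(\bb D)\cong M_n\!\left(\bb L_g\otimes_\bb L\bb D\right)$, so $\bb L_g\otimes_\bb L(-)$ preserves the relation generating Brauer equivalence and hence descends to a well-defined map $g_*\colon\Br(\bb L)\to\Br(\bb L)$; it is monoidal, $\bb L_g\otimes_\bb L(\bb D_1\otimes_\bb L\bb D_2)\cong(\bb L_g\otimes_\bb L\bb D_1)\otimes_\bb L(\bb L_g\otimes_\bb L\bb D_2)$ (a routine isomorphism checked on elements), so $g_*$ is a group homomorphism. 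Finally $\bb L_e\otimes_\bb L\bb D\cong\bb D$ for the identity $e\in\Gal(\bb L/\bb K)$, and $\bb L_g\otimes_\bb L\bb L_h\cong\bb L_{gh}$ --- part of the monoidal grading of $\bb L\Bim_\bb K$ from Example~\ref{Eg:LBimK} --- so associativity of $\otimes_\bb L$ gives $e_*=\id$ and $g_*\circ h_*=(gh)_*$. Hence $g\mapsto g_*$ is an action of $\Gal(\bb L/\bb K)$ on $\Br(\bb L)$, as claimed.
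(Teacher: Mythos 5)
Your proof is correct. The paper in fact states this proposition without any proof, treating it as a classical fact, so there is no argument of theirs to compare against; your identification of $\bb L_g\otimes_{\bb L}\bb D$ with the Galois conjugate algebra ${}^g\bb D$ (same ring, structure map $\iota\circ g^{-1}$) is the standard way to verify it, and your bookkeeping of the two $\bb L$-actions, the compatibility with matrix algebras and with $\otimes_{\bb L}$, and the use of $\bb L_g\otimes_{\bb L}\bb L_h\cong\bb L_{gh}$ from Example \ref{Eg:LBimK} are all accurate.
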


It is clear that every $G$-normal algebra $\mathbb D$ gives rise to a fixed point $[\mathbb D]\in\Br(\bb L)^G$, but in fact a strong converse is also true.

\begin{lemma}\label{Lem:GNormalAlgebras=FixedPoints}
    Every fixed point $X$ of the action of $G$ on $\Br(\bb L)$ is of the form $X=[\mathbb D]$, for some $G$-normal division algebra $\mathbb D$.
\end{lemma}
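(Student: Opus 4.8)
The plan is to produce the algebra $\bb D$ as the division-algebra representative of the class $X$ — unique up to isomorphism by Artin--Wedderburn — and then to verify directly that it is $G$-normal. The hypothesis that $X$ is a fixed point says precisely that $[\bb L_g \otimes_{\bb L} \bb D] = [\bb D]$ in $\Br(\bb L)$ for every $g \in G$. By Proposition \ref{Prop:G-ActsOnBr(L)} the algebra $\bb L_g \otimes_{\bb L} \bb D$ is again $\bb L$-central simple, and since $\bb L_g$ is free of rank one as an $\bb L$-module it has the same $\bb L$-dimension as $\bb D$. Now any $\bb L$-central simple algebra that is Brauer-equivalent to the division algebra $\bb D$ is isomorphic to some $M_n(\bb D)$, so matching $\bb L$-dimensions forces $n = 1$; hence $\bb L_g \otimes_{\bb L} \bb D \cong \bb D$ as $\bb L$-algebras, for every $g$.

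It then remains to translate these isomorphisms into the $G$-normality data. First I would make the ring structure on $\bb L_g \otimes_{\bb L} \bb D$ explicit: the map $\iota_g \colon \bb D \to \bb L_g \otimes_{\bb L} \bb D$ sending $d \mapsto 1 \otimes d$ is a $\bb K$-linear bijection and a ring homomorphism, which intertwines the central copy of $\bb L$ in $\bb D$ with the left $\bb L$-action carried by $\bb L_g$ via $\iota_g(\lambda\, d) = g(\lambda)\cdot \iota_g(d)$. (Equivalently, $\bb L_g \otimes_{\bb L} \bb D$ is just $\bb D$ with its $\bb L$-structure map precomposed with $g$; whether $g$ or $g^{-1}$ appears is dictated by the chosen convention for the twisted bimodule $\bb L_g$ and is immaterial below, since we range over all of $G$.) Composing $\iota_g$ with an $\bb L$-algebra isomorphism $\phi \colon \bb L_g \otimes_{\bb L} \bb D \xrightarrow{\sim} \bb D$ yields a $\bb K$-algebra automorphism $\tilde g := \phi \circ \iota_g$ of $\bb D$, and the $\bb L$-linearity of $\phi$ gives $\tilde g(\lambda) = g(\lambda)$ for all $\lambda \in \bb L$, i.e. $\tilde g|_{\bb L} = g$. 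Carrying this out for every $g \in G$ exhibits $\bb D$ as $G$-normal, and $X = [\bb D]$ by the choice of $\bb D$.

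The step that actually carries weight — and the one I expect to be the crux — is the passage from Brauer-equivalence to an outright $\bb L$-algebra isomorphism: the point is to rule out $\bb L_g \otimes_{\bb L} \bb D$ being a proper matrix algebra over $\bb D$, and this is exactly where it matters that $\bb D$ was taken to be a \emph{division} algebra rather than an arbitrary representative of $X$. The rest is careful bookkeeping with $\bb L_g$: I would fix its conventions once and for all, so that the conjugation identification $\bb L_g \otimes_{\bb L} \bb D \cong {}^g\bb D$ and the resulting computation of $\tilde g|_{\bb L}$ come out consistently, but I do not expect any substantive difficulty there.
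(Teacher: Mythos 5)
Your proposal is correct and follows essentially the same route as the paper: take the division-algebra representative $\mathbb D$ of the fixed class, use the Brauer equivalence $[\bb L_g\otimes_{\bb L}\mathbb D]=[\mathbb D]$ to get an actual $\bb L$-algebra isomorphism (the paper notes both sides are division algebras, while you count dimensions — the same standard uniqueness fact), and then read off the $g$-linear automorphism $\tilde g$. Your write-up is simply a more detailed unpacking of the paper's final sentence.
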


\begin{proof}
    If $X\in\Br(\bb L)^G$, then it is of the form $X=[\mathbb D]$ for some division algebra $\mathbb D$.  For any $g\in G$, we have that 
    \[[\bb L_{g^{-1}}\otimes_{\bb L}\mathbb D]=:(g^{-1})_*X=X=[\mathbb D],\]
    and so we know that $\bb L_{g^{-1}}\otimes_{\bb L}\mathbb D$ and $\mathbb D$ are Morita equivalent.  Since they are both division algebras, it follows that they are actually isomorphic as $\bb L$ algebras, but this is the same as having a $g$-linear isomorphism from $\mathbb D$ to itself.
\end{proof}

In light of Lemma \ref{Lem:GNormalAlgebras=FixedPoints}, we will write $\Br(\bb L)^G$ for the collection of $G$-normal division algebras up to isomorphism.  With our notation established, we are finally ready to state our first important observation of this section.

\begin{proposition}\label{Prop:MoritaTrivialsLookLikeDBim}
    Suppose that $\mathcal C=\Vec_{\bb L}^\omega\big(\Gal(\bb L/\mathbb K)\big)$ is Morita equivalent to $\Vec_{\mathbb K}$.  The underlying category of any such Morita equivalence is of the form $\mathbb D\Mod$, for some $G$-normal division algebra $\mathbb D$.
\end{proposition}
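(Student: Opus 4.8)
The plan is to unpack a chosen Morita equivalence $\s M$ between $\s C=\Vec_{\bb L}^\omega\big(\Gal(\bb L/\bb K)\big)$ and $\Vec_{\bb K}$, regarded as a $\s C\dash\Vec_{\bb K}$-bimodule category, and to produce $\bb D$ as the endomorphism algebra of the unique simple object of $\s M$. First I would record that, exactly as in the proof of Proposition \ref{Prop:MoritaTFAE(Fusion)}, the category $\s M$ is locally separable, hence a finite semisimple $\bb K$-linear category, and that the right $\Vec_{\bb K}$-action is trivial in the sense that $\End(\s M)_{\Vec_{\bb K}}$ is just the category of all $\bb K$-linear endofunctors of $\s M$. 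By part \emph{(v)} of that proposition the action functor then gives a monoidal equivalence $\s C\simeq\End(\s M)_{\Vec_{\bb K}}$.

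Next I would show that $\s M$ has a single simple object. If $M_1,\dots,M_n$ are the simple objects of $\s M$ and $A=\prod_i\End(M_i)$, then $\s M\simeq A\Mod$ and, by the Eilenberg--Watts description of $\bb K$-linear endofunctors of a semisimple category, $\End(\s M)_{\Vec_{\bb K}}\simeq\big(A\otimes_{\bb K}A^{\mathrm{op}}\big)\Mod$, with the monoidal unit corresponding to $A$ viewed as an $A$-bimodule. This bimodule decomposes as the direct sum of the nonzero, pairwise non-isomorphic sub-bimodules $\End(M_i)$, so it is simple only when $n=1$. Since $\s C$ is fusion, its unit is simple, forcing $n=1$; set $\bb D=\End(M_1)$, a division algebra. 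Thus the underlying category of $\s M$ is $\bb D\Mod$, and $\s C\simeq\big(\bb D\otimes_{\bb K}\bb D^{\mathrm{op}}\big)\Mod$ as monoidal categories; comparing endomorphism algebras of the two units gives $Z(\bb D)=\Omega\s C=\bb L$, so $\bb D$ is an $\bb L$-central simple algebra.

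It remains to see that $\bb D$ is $G$-normal for $G=\Gal(\bb L/\bb K)$. Because $\bb L/\bb K$ is Galois, $\bb L\otimes_{\bb K}\bb L\cong\prod_{g\in G}\bb L$, and this refines to a product decomposition $\bb D\otimes_{\bb K}\bb D^{\mathrm{op}}\cong\prod_{g\in G}B_g$ into central simple $\bb L$-algebras, where $B_g$ is the corresponding ``twisted tensor product'', so that $[B_g]=[\bb D]-g_*[\bb D]$ in $\Br(\bb L)$ for the action $g_*$ of Proposition \ref{Prop:G-ActsOnBr(L)} (up to whether one twists by $g$ or $g^{-1}$, which is immaterial here). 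The simple objects of $\s C\simeq\big(\bb D\otimes_{\bb K}\bb D^{\mathrm{op}}\big)\Mod$ are exactly the simple $B_g$-modules, and the endomorphism algebra of the simple $B_g$-module is the division algebra Brauer-equivalent to $B_g$. But in $\Vec_{\bb L}^\omega(G)=\bb L\Bim_{\bb K}$ every simple object has endomorphism algebra $\bb L$ (Example \ref{Eg:LBimK}), so each $B_g$ must be trivial in $\Br(\bb L)$, i.e. $g_*[\bb D]=[\bb D]$ for every $g\in G$. Hence $[\bb D]\in\Br(\bb L)^G$, and Lemma \ref{Lem:GNormalAlgebras=FixedPoints} identifies the division algebra representing this class --- which is $\bb D$ --- as $G$-normal, completing the proof.

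I expect the main obstacle to be the bookkeeping in the last two steps: justifying the Eilenberg--Watts identification $\End(\s M)_{\Vec_{\bb K}}\simeq\big(\bb D\otimes_{\bb K}\bb D^{\mathrm{op}}\big)\Mod$ carefully in the possibly non-split $\bb K$-linear setting, and pinning down the algebras $B_g$ precisely enough to recognize their Brauer classes in terms of the $G$-action of Proposition \ref{Prop:G-ActsOnBr(L)}. By contrast, the conceptual heart --- that the \emph{fusion} hypothesis forces $\s M$ to have exactly one simple object --- is short once the setup is in place.
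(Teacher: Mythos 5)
Your proposal is correct and follows essentially the same route as the paper's proof: identify $\s C$ with the dual category $\End(\s M)_{\Vec_{\bb K}}$, use the fusion hypothesis to force a single simple object $m$ with $\bb D=\End(m)$, recognize $\s C\simeq\bb D\Bim_{\bb K}$ with $Z(\bb D)=\bb L$, decompose $\bb D\otimes_{\bb K}\bb D^{\mathrm{op}}$ over $G$ into central simple $\bb L$-algebras of class $[\bb D]\cdot g_*[\bb D]^{-1}$, and conclude $G$-normality from the triviality of these classes via Lemma \ref{Lem:GNormalAlgebras=FixedPoints}. The only difference is that you spell out the ``one simple object'' step (via the decomposition of the unit bimodule $A=\prod_i\End(M_i)$) in slightly more detail than the paper does.
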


\begin{proof}
Suppose that there is a Morita equivalence $\mathcal M$ from $\mathcal C$ to $\Vec_{\mathbb K}$.  Since $\mathcal C \simeq \End_\mathcal{\Vec_{\mathbb K}}(\mathcal{M})$ is fusion it follows that $\mathcal M$ must only have one simple object $m$. Set $\mathbb D=\End(m)$, then it follows that $\bb D$ is a division algebra and $\mathcal M\simeq\mathbb D\Mod$ as module categories over $\Vec_{\mathbb K}$.

Since $\mathcal M$ is a Morita equivalence, $\mathcal C\simeq\bb D\Bim_{\bb K}$.  Let us analyze the structure of the category $\bb D\Bim_{\bb K}$.  The monoidal unit of $\bb D\Bim_{\bb K}$ is the algebra $\mathbb D$ itself, thought of as a $\mathbb D$-bimodule.  The endomorphisms of $\1=\mathbb D$ are $\End(\mathbb D)=Z(\mathbb D)$, and since $\mathcal C\simeq\bb D\Bim_{\bb K}$, we find that $Z(\mathbb D)=\bb L$.  We can compute that, as $\mathbb K$-algebras,
\begin{align*}
    \mathbb D\otimes_{\mathbb K}\mathbb D^{op}&\cong\mathbb D\otimes_{\bb L}\big(\bb L\otimes_{\mathbb K}\bb L\big)\otimes_{\bb L}\mathbb D^{op}\\
    &\cong\mathbb D\otimes_{\bb L}\left(\bigoplus_{g\in\Gal(\bb L/\mathbb K)}\bb L_g\right)\otimes_{\bb L}\mathbb D^{op}\\
    &\cong\bigoplus_{g\in\Gal(\bb L/\mathbb K)}\mathbb D\otimes_{\bb L}\bb L_g\otimes_{\bb L}\mathbb D^{op}\,.
\end{align*}
The algebras $\mathbb D\otimes_{\bb L}\bb L_g\otimes_{\bb L}\mathbb D^{op}$ are central simple over $\bb L$, and their Brauer class is $[\mathbb D]\cdot g_*[\mathbb D]^{-1}$.

By looking at our original category $\mathcal C$, we find that every simple object has $\bb L$ as its endomorphism algebra.  Thus it follows that each of the classes $[\mathbb D]\cdot g_*[\mathbb D]^{-1}$ must be trivial in $\Br(\bb L)$, and so $\bb D$ is $G$-normal by Lemma \ref{Lem:GNormalAlgebras=FixedPoints}.\qedhere

\end{proof}

In \cite{teichmuller}, Teichm\"uller associated a Galois cohomology class $T(\mathbb D)\in H^3(G;\bb L^\times)$ to each $G$-normal algebra $\mathbb D$.  In the article \cite{MR25443}, Eilenberg and Mac Lane established that $T:\Br(\bb L)^G\to H^3(G;\bb L^\times)$ is a homomorphism, and they also completely characterized the image of $T$.

\begin{theorem}[{\cite[cf. Thm 7.1]{MR25443}}]\label{Thm:E&MLClassification}
    The image $\im(T)\leq H^3(G;\bb L^\times)$ of the Teichm\"uller construction is the subgroup $H^3_0(G;\bb L^\times)$ consisting of all classes that trivialize upon inflation\footnote{Eilenberg and Mac Lane referred to `lifting' the cocycle, but this process coincides with what is now known as inflation.} to some finite Galois extension.
\end{theorem}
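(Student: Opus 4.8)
The plan is to compute the image of $T$ at ``finite level'' by identifying $T$ with the transgression of a Hochschild--Serre spectral sequence, and then to pass to a colimit. Fix a finite Galois extension $\bb L'/\bb K$ with $\bb L'\supseteq\bb L$, put $\Gamma=\Gal(\bb L'/\bb K)$ and $N=\Gal(\bb L'/\bb L)\trianglelefteq\Gamma$, so $\Gamma/N\cong G$, and consider
\[E_2^{p,q}\;=\;H^p\big(G;H^q(N;(\bb L')^\times)\big)\;\Longrightarrow\;H^{p+q}\big(\Gamma;(\bb L')^\times\big)\,.\]
Hilbert's theorem~90 gives $H^1(N;(\bb L')^\times)=0$, hence $E_2^{p,1}=0$ for all $p$; also $H^0(N;(\bb L')^\times)=\bb L^\times$ and $H^2(N;(\bb L')^\times)=\Br(\bb L'/\bb L)$. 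Thus $E_2^{1,1}=E_2^{2,1}=0$, which forces $E_3^{3,0}=E_2^{3,0}=H^3(G;\bb L^\times)$ and $E_3^{0,2}=E_2^{0,2}=\Br(\bb L'/\bb L)^G$, and the only differential hitting the $(3,0)$-corner is the transgression $d_3\colon E_3^{0,2}\to E_3^{3,0}$. Since the edge map $E_2^{3,0}\twoheadrightarrow E_\infty^{3,0}\hookrightarrow H^3(\Gamma;(\bb L')^\times)$ is inflation, I get the exact description
\[\ker\Big(\infl_{\bb L}^{\bb L'}\colon H^3(G;\bb L^\times)\to H^3(\Gamma;(\bb L')^\times)\Big)\;=\;\im\Big(d_3\colon\Br(\bb L'/\bb L)^G\to H^3(G;\bb L^\times)\Big)\,.\]

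The decisive input, which is the content of \cite{MR25443}, is that under $H^2(N;(\bb L')^\times)\cong\Br(\bb L'/\bb L)$ this transgression $d_3$ coincides, up to an overall sign, with the Teichm\"uller map $T$ restricted to the subgroup $\Br(\bb L'/\bb L)^G\leq\Br(\bb L)^G$: one takes a crossed-product representative of a $G$-fixed class in $\Br(\bb L'/\bb L)$, lifts each $g\in G$ to a $\bb K$-semilinear automorphism of that algebra using the $\Gamma$-equivariance of the cocycle, and checks that the resulting obstruction $3$-cocycle on $G$ is the one produced by the transgression formula. Granting this, the theorem follows by taking the filtered colimit over all finite Galois $\bb L'\supseteq\bb L$ in $\bb K^{\mathrm{sep}}$. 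By definition $\bigcup_{\bb L'}\ker(\infl_{\bb L}^{\bb L'})=H^3_0(G;\bb L^\times)$. On the other hand every class in $\Br(\bb L)^G$ is split by some such $\bb L'$ (take the normal closure over $\bb K$ of a finite splitting field), and since filtered colimits commute with the finite limit $(-)^G$ and $\colim_{\bb L'}\Br(\bb L'/\bb L)=\Br(\bb L)$, one has $\colim_{\bb L'}\Br(\bb L'/\bb L)^G=\Br(\bb L)^G$; hence $\bigcup_{\bb L'}\im(d_3)=\im(T)$. Comparing the two descriptions of $\ker(\infl)$ yields $\im(T)=H^3_0(G;\bb L^\times)$.

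As a consistency check, the inclusion $\im(T)\subseteq H^3_0(G;\bb L^\times)$ can also be proved directly without the spectral sequence. Given a $G$-normal division algebra $\bb D$ over $\bb L$, choose a finite Galois $\bb L'/\bb K$ with $\bb L'\supseteq\bb L$ that splits $\bb D$. Then $\bb D\otimes_{\bb L}\bb L'$ is $\Gamma$-normal, and its Teichm\"uller class equals $\infl_{\bb L}^{\bb L'}\big(T(\bb D)\big)$ --- the semilinear automorphisms, the Skolem--Noether elements, and the obstruction $3$-cocycle all simply get tensored up and reindexed through $\Gamma\twoheadrightarrow G$. But $\bb D\otimes_{\bb L}\bb L'\cong\End_{\bb L'}(V)$ is split, and a split algebra has trivial Teichm\"uller class: lifting each $\gamma\in\Gamma$ to a $\gamma$-semilinear automorphism $\phi_\gamma$ of $V$, the $\bb L'$-linear automorphisms $\phi_\gamma\phi_{\gamma'}\phi_{\gamma\gamma'}^{-1}$ of $V$ constitute a twisted (nonabelian) $2$-cocycle valued in $GL(V)$, so the associated $3$-cocycle is identically $1$. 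Hence $\infl_{\bb L}^{\bb L'}\big(T(\bb D)\big)=1$, so $T(\bb D)\in H^3_0(G;\bb L^\times)$.

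The main obstacle is exactly the comparison $T=\pm d_3$, equivalently the reverse inclusion $H^3_0\subseteq\im(T)$: from $\infl_{\bb L}^{\bb L'}(\omega)=1$ the exact description gives $\omega=d_3(\xi)$ for some $\xi\in\Br(\bb L'/\bb L)^G$, and one must show that the $G$-normal division algebra representing $\xi$ (which is $G$-normal by Lemma~\ref{Lem:GNormalAlgebras=FixedPoints}) has Teichm\"uller class exactly $\omega$, not merely an element of $H^3_0$. Keeping the crossed-product $2$-cocycle, the semilinear lifts, and the transgression formula mutually consistent through this computation is the delicate bookkeeping that occupies the bulk of Eilenberg--Mac Lane's argument; following their reference we simply cite it.
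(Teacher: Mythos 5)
Your proposal is correct in structure, but note that the paper offers no proof of this statement at all: it is imported wholesale as \cite[Thm 7.1]{MR25443}, so any comparison is really between your argument and Eilenberg--Mac Lane's original one. Your reorganization via the Hochschild--Serre spectral sequence for $1\to\Gal(\bb L'/\bb L)\to\Gal(\bb L'/\bb K)\to G\to 1$ is sound: Hilbert 90 kills the $q=1$ row, so the only differential into the $(3,0)$ corner is the transgression $d_3:\Br(\bb L'/\bb L)^G\to H^3(G;\bb L^\times)$, the edge map identifies $\ker(\infl_{\bb L}^{\bb L'})$ with $\im(d_3)$, and the filtered-colimit bookkeeping (every $G$-fixed Brauer class lands in some $\Br(\bb L'/\bb L)^G$, and finite limits commute with filtered colimits) correctly reduces the theorem to the single comparison $T=\pm d_3$. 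Your direct verification of the easy inclusion $\im(T)\subseteq H^3_0$ is complete and matches the mechanics of the paper's own Theorem \ref{Thm:MoritaInterpOfT} (semilinear lifts, Skolem--Noether elements, coboundary of $r$); and the transgression picture is consonant with the paper's Section \ref{Sec:spectral}, which likewise identifies the Teichm\"uller construction with a $d_3$ differential (there in the Atiyah--Hirzebruch rather than the Hochschild--Serre spectral sequence). The one caveat worth flagging: the identity $T=\pm d_3$ is not literally what \cite{MR25443} proves --- Eilenberg and Mac Lane argue by explicit cocycle ``lifting'' and never mention spectral sequences --- so your citation is a mild anachronism; the comparison of $T$ with the transgression is a separate (standard, but nontrivial) verification found in later literature. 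Since the hard inclusion $H^3_0\subseteq\im(T)$ rests entirely on that comparison, your proof is exactly as dependent on the classical reference as the paper's bare citation, just with the dependency localized to one cleanly stated lemma; what the spectral-sequence route buys is a conceptual explanation of \emph{why} the image of $T$ is the relative kernel of inflation, rather than a genuinely independent proof.
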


The quantification over finite Galois extensions in the above theorem can be avoided by rephrasing this theorem in terms of the absolute Galois group $G_{\mathbb K}=\Gal(\mathbb K^{\text{sep}}/\mathbb K)$.

\begin{corollary}[Alternate version of Theorem \ref{Thm:E&MLClassification}]\label{Cor:E&MLClassification}
    For any Galois extension $\bb L/\mathbb K$, 
    \[\im(T)=\ker\Big(\infl:H^3(G;\bb L^\times)\to H^3\big(\bb K;\bb G_m\big)\Big)\,.\]
\end{corollary}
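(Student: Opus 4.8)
The plan is to deduce this reformulation directly from Theorem \ref{Thm:E&MLClassification} together with the presentation of absolute Galois cohomology as a filtered colimit (Proposition \ref{Prop:AbsoluteCohomologyIsAColimit}). First I would recall that $H^3(\bb K;\bb G_m)=\colim H^3\big(\Gal(\bb F/\bb K);\bb F^\times\big)$, where the colimit runs over the directed system of finite Galois extensions $\bb F/\bb K$ with transition maps given by inflation. The finite Galois extensions $\bb F/\bb K$ containing $\bb L$ form a cofinal subsystem --- the compositum of $\bb L$ with any finite Galois extension of $\bb K$ is again finite Galois over $\bb K$ and contains $\bb L$ --- so the colimit may be computed over this subsystem, and under the resulting identification the structure map out of the $\bb L$-indexed term is precisely the inflation map $\infl:H^3(G;\bb L^\times)\to H^3(\bb K;\bb G_m)$ in the statement.

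Next I would invoke the standard fact that in a filtered colimit of abelian groups an element of one of the terms dies in the colimit if and only if it is already annihilated by one of the transition maps of the system. Applied to a class $\omega\in H^3(G;\bb L^\times)$, this says that $\infl(\omega)=0$ in $H^3(\bb K;\bb G_m)$ if and only if there exists a finite Galois extension $\bb F/\bb K$ with $\bb L\subseteq\bb F$ such that $\infl_{\bb L}^{\bb F}(\omega)=0$ in $H^3\big(\Gal(\bb F/\bb K);\bb F^\times\big)$.

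Finally I would observe that this last condition is verbatim the defining property of the subgroup $H^3_0(G;\bb L^\times)$ of Theorem \ref{Thm:E&MLClassification}: it is exactly the set of classes that trivialize upon inflation to some finite Galois extension. Hence $\ker(\infl)=H^3_0(G;\bb L^\times)$, and Theorem \ref{Thm:E&MLClassification} identifies this subgroup with $\im(T)$, which finishes the proof. The corollary is essentially a formal translation of Theorem \ref{Thm:E&MLClassification} into the language of the absolute Galois group, so I do not expect a genuine obstacle; the only point requiring a moment's care is the cofinality observation that lets one restrict attention to finite Galois extensions of $\bb K$ containing $\bb L$, which is what makes the inflation map of the statement agree with the colimit structure map.
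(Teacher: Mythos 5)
Your proposal is correct and matches what the paper intends: the corollary is presented as an immediate reformulation of Theorem \ref{Thm:E\&MLClassification} via the colimit description of $H^3(\bb K;\bb G_m)$ from Proposition \ref{Prop:AbsoluteCohomologyIsAColimit}, and your cofinality and filtered-colimit observations supply exactly the (routine) details the paper leaves implicit.
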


\subsection{G-normal algebras and Morita triviality}
 
Let us now spend some time explaining how to interpret $T(\mathbb D)$ categorically.

\begin{theorem}\label{Thm:MoritaInterpOfT}
    Set $G=\Gal(\bb L/\bb K)$, and let $\bb D$ be a $G$-normal algebra.  The category $\bb D\Bim_{\bb K}$ of $\bb K$-linear $\bb D$-bimodules is equivalent to a category of Galois graded vector spaces with associator cocycle $T(\bb D)$,
    \[\text{i.e.}\hspace{1cm}\bb D\Bim_{\bb K}\;\simeq\;\Vec_{\bb L}^{T(\bb D)}\big(\Gal(\bb L/\bb K)\big)\,.\]
\end{theorem}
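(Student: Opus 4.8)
The plan is to analyze the category $\bb D\Bim_{\bb K}$ directly, identifying its simple objects, fusion rules, endomorphism algebras, and finally its associator, and to show that at each stage the data matches $\Vec_{\bb L}^{T(\bb D)}\big(\Gal(\bb L/\bb K)\big)$. First I would recall the computation already carried out in the proof of Proposition \ref{Prop:MoritaTrivialsLookLikeDBim}: writing $\1=\bb D$ as a bimodule over itself, one has $\End(\1)=Z(\bb D)=\bb L$, and the $\bb K$-algebra decomposition
\[\bb D\otimes_{\bb K}\bb D^{op}\;\cong\;\bigoplus_{g\in\Gal(\bb L/\bb K)}\bb D\otimes_{\bb L}\bb L_g\otimes_{\bb L}\bb D^{op}\,.\]
Because $\bb D$ is $G$-normal, Lemma \ref{Lem:GNormalAlgebras=FixedPoints} (applied in reverse) shows each summand $\bb D\otimes_{\bb L}\bb L_g\otimes_{\bb L}\bb D^{op}$ has trivial Brauer class over $\bb L$, hence is isomorphic to a matrix algebra $M_{n}(\bb L)$; so $\bb D\Bim_{\bb K}$ has exactly one simple bimodule $N_g$ for each $g\in G$, with $\End(N_g)\cong\bb L$. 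Concretely, $N_g$ may be realized as $\bb D$ with left action the usual one and right action twisted through the $g$-linear algebra automorphism $\tilde g:\bb D\to\bb D$ guaranteed by $G$-normality; one checks that the left and right embeddings $\bb L\hookrightarrow\End(N_g)$ differ precisely by $g$, so that $N_g$ lies in the $g$-graded component of the Galois grading of Proposition \ref{Prop:GaloisGrading}, and the fusion rule $N_g\otimes_{\bb D}N_h\cong N_{gh}$ follows because $\widetilde{gh}$ and $\tilde g\circ\tilde h$ differ by an inner automorphism of $\bb D$. Thus $\bb D\Bim_{\bb K}$ is a pointed fusion category faithfully graded by $G$ with each component of rank one, so by the same reasoning as in Theorem \ref{Thm:InflationAlongASplittingField} it is monoidally equivalent to $\Vec_{\bb L}^{\omega}\big(\Gal(\bb L/\bb K)\big)$ for \emph{some} cocycle $\omega\in Z^3(G;\bb L^\times)$.

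The remaining, and genuinely substantive, step is to identify the class $[\omega]$ with $T(\bb D)$. Here I would unwind Teichmüller's original definition: choosing the $g$-linear automorphisms $\tilde g$ for each $g\in G$, the composite $\tilde g\circ\tilde h\circ\widetilde{gh}^{-1}$ is an $\bb L$-linear automorphism of $\bb D$, hence by Skolem–Noether is conjugation by a unit $u(g,h)\in\bb D^\times$; the failure of these $u(g,h)$ to satisfy the $2$-cocycle identity on the nose is measured by an element $\omega_T(g,h,k)\in Z(\bb D)^\times=\bb L^\times$, and $T(\bb D):=[\omega_T]\in H^3(G;\bb L^\times)$ is this class. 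On the categorical side, the associator of $\bb D\Bim_{\bb K}$ on the objects $N_g$ is computed from the canonical associativity isomorphism of bimodule tensor products $(N_g\otimes_{\bb D}N_h)\otimes_{\bb D}N_k\xrightarrow{\sim}N_g\otimes_{\bb D}(N_h\otimes_{\bb D}N_k)$, transported along the chosen trivializations $N_g\otimes_{\bb D}N_h\cong N_{gh}$. I would show that under the identification $N_g\otimes_{\bb D}N_h\cong N_{gh}$ built from the units $u(g,h)$, the pentagon discrepancy is exactly the scalar $\omega_T(g,h,k)\cdot\id_{N_{ghk}}$; in other words, the bimodule associativity constraint is \emph{strict}, and all of the nontriviality is pushed into the re-indexing isomorphisms, whose coherence failure is by construction Teichmüller's cocycle. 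Changing the choices of $\tilde g$ or of the units $u(g,h)$ alters $\omega_T$ by a coboundary and correspondingly alters the monoidal equivalence by a tensor natural isomorphism (invoking Lemma \ref{Lem:WellDefOnCohomology}), so the cohomology class is well-defined and independent of choices.

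The main obstacle I anticipate is bookkeeping in this last identification: one must carefully track how the left/right $\bb D$-actions interact across the three tensor factors and verify that the Skolem–Noether units $u(g,h)$ that define the re-indexing isomorphisms are the \emph{same} units whose associativity defect is Teichmüller's $3$-cocycle — a priori these are two separate constructions (one categorical, one from classical crossed-product theory) and the content of the theorem is that they agree. I would handle this by giving an explicit model for $N_g$ and for the isomorphisms $N_g\otimes_{\bb D}N_h\cong N_{gh}$ in terms of the elements $u(g,h)$, then computing both sides of the pentagon in this model; alternatively, one can phrase the argument more conceptually by recognizing $\bb D$ as a crossed-product-like algebra $\bigoplus_g \bb D\cdot u_g$ internal to $\Vec_{\bb L}^{\omega}\big(\Gal(\bb L/\bb K)\big)$ and matching this against the crossed-product description underlying $T(\bb D)$. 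Either way, once the cocycle is pinned down on the nose, Lemma \ref{Lem:WellDefOnCohomology} upgrades the equality of cocycles (up to the chosen trivializations) to the asserted monoidal equivalence $\bb D\Bim_{\bb K}\simeq\Vec_{\bb L}^{T(\bb D)}\big(\Gal(\bb L/\bb K)\big)$.
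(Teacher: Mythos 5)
Your proposal is correct and follows essentially the same route as the paper: realize the simple bimodules as copies of $\bb D$ with right action twisted through the lifts $\tilde g$, build the multiplication isomorphisms $N_g\otimes_{\bb D}N_h\cong N_{gh}$ by correcting the na\"ive maps with Skolem--Noether units, and identify the resulting associator defect with Teichm\"uller's coboundary expression. The only cosmetic difference is that the paper pins down the Skolem--Noether elements $r(a,b)$ by normalizing them with a separability projection $p(r(a,b))=1$, which makes the on-the-nose cocycle computation cleaner but is not needed for the cohomological identification you describe.
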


\begin{proof}

Let $\mathbb D$ be a $G$-normal algebra, and for every $a\in G$, let $\Tilde{a}\in\Aut(\mathbb D)$ be a lifting of $a$.  Using these automorphisms, we define a collection of $\mathbb K$-linear maps
\begin{align*}
 \gamma_a:\bb L_a\otimes_{\bb L}\mathbb D&\longrightarrow\mathbb D\otimes_{\bb L}\bb L_a\\
 l\otimes d&\longmapsto \Tilde{a}(d)\otimes l\;.
\end{align*}

Let $l_a\in\bb L_a$ be the image of $1\in\bb L$ under some isomorphism of left $\bb L$-modules, and define a collection of isomorphisms $f_{a,b}:\bb L_a\otimes_{\bb L}\bb L_b\to\bb L_{ab}$ by the formula $f_{a,b}(l_a\otimes l_b)=l_{ab}$.  Since $\bb L$ is Galois over $\mathbb K$, and $\mathbb D$ is central simple over $\bb L$, it follows that $\mathbb D$ is separable as a $\mathbb K$-algebra.  Let
\[\theta(d)=\sum_id\cdot u_i\otimes_{\mathbb K}v_i=\sum_iu_i\otimes_{\mathbb K}v_i\cdot d\]
be a separator for $\mathbb D$.

We define an $\bb L$-linear projection $p:\mathbb D\to\bb L$ by the formula
\[p(d)=\sum_iu_idv_i\,.\]

The simple objects in the category of $\mathbb D$-bimodules are all of the form $X_a:=\mathbb D\otimes_{\bb L}\bb L_a$ for $a\in G$.  The left $\mathbb D$-module action $\mathbb D\otimes_{\mathbb K}\mathbb D\otimes_{\bb L}\bb L_a\to \mathbb D\otimes_{\bb L}\bb L_a$ on $X_a$ is simply the product $\mu:\mathbb D\otimes\mathbb D\to\mathbb D$ on the $\mathbb D$ factors.  The right module action is given by the composition
\[\mathbb D\otimes_{\bb L}\bb L_a\otimes_{\mathbb K}\mathbb D\twoheadrightarrow\mathbb D\otimes_{\bb L}\bb L_a\otimes_{\bb L}\mathbb D\mathop{\longrightarrow}\limits^{\id\otimes\gamma_a}\mathbb D\otimes_{\bb L}\mathbb D\otimes_{\bb L}\bb L_a\mathop{\longrightarrow}\limits^{\mu\otimes\id}\mathbb D\otimes_{\bb L}\bb L_a\,.\]
That this is a valid module action follows from the fact that $\Tilde{a}$ is an automorphism.

In order to investigate the associator in this category, it is necessary to choose a collection of bimodule isomorphisms $X_a\otimes_{\mathbb D}X_b\to X_{ab}$.  There is an obvious $\bb L$-linear isomorphism given by
\[(\mu\otimes f_{a,b})\circ(\id\otimes\gamma_a\otimes \id)\,.\]
This na\"ive map fails to be a morphism of right modules.  To see this, let's drop the tensor product symbols, and compute what this map does on elements.  The above morphism corresponds to the following formula
\[d_1l_ad_2l_b\mapsto d_1\Tilde{a}(d_2)l_{ab}\,.\]
Pre-multiplication by $d_3$ on the right produces
\begin{align*}
 d_1l_ad_2l_bd_3&\mapsto d_1l_ad_2\Tilde{b}(d_3)l_b\\
 &\mapsto d_1\Tilde{a}\big(d_2\Tilde{b}(d_3)\big)l_al_b\\
 % &\mapsto d_1\Tilde{a}\big(d_2\Tilde{b}(d_3)\big)l_{ab}\\
 &= d_1\Tilde{a}(d_2)\Tilde{a}\big(\Tilde{b}(d_3)\big)l_{ab}\,.\\
\end{align*}
Post-multiplication by $d_3$ on the right produces
\begin{align*}
 d_1l_ad_2l_bd_3&\mapsto d_1\Tilde{a}(d_2)l_{ab}d_3\\
 &\mapsto d_1\Tilde{a}(d_2)\widetilde{\,ab\,}(d_3)l_{ab}\,.
\end{align*}
The discrepancy between the two morphisms is the automorphism $\Tilde{a}\Tilde{b}\widetilde{\,ab\,}^{-1}$ on the $d_3$ factor.  By the Skolem-Noether theorem, this automorphism is inner, and so there is some nonzero $r(a,b)\in\mathbb D$ with the property that
\[r(a,b)\cdot\widetilde{\,ab\,}(d)=\Tilde{a}\Tilde{b}(d)\cdot r(a,b)\,,\]
for every $d\in\mathbb D$.  Using the central projection $p:\mathbb D\to\bb L$, we can assume without loss of generality that $p\big(r(a,b)\big)=1$.  This allows us to use multiplication by $r(a,b)$ to correct the na\"ive morphism, and turn it into a morphism of bimodules as follows
\[m_{a,b}:=\big(\mu\big(-\otimes r(a,b)\big)\otimes \id\big)\circ(\mu\otimes f_{a,b})\circ(\id\otimes\gamma_a\otimes \id)\,.\]
The associator coefficients for this category $\bb D\Bim_{\bb K}\simeq\Vec_{\bb L}^\omega(G)$ are a collection of scalars $\{\omega(a,b,c)\in\bb L^\times~|~a,b,c\in G\}$.  Since the associator is inherited from $\Vec_{\mathbb K}$, these scalars $\omega(a,b,c)$ can be computed using the following formula
\[\omega(a,b,c)\cdot\id_{X_{abc}}\;=\;m_{a,bc}\circ(\id\otimes m_{b,c})\circ(m_{a,b}^{-1}\otimes\id)\circ m_{ab,c}^{-1}\,.\]
A lengthy yet elementary computation can now be made by appealing to our definition of $m_{a,b}$, together with our assumption that $p\big(r(a,b)\big)=1$.  On elements, this yields the formula
\begin{align*}
\omega(a,b,c)\cdot d\otimes l_{abc}\;&=\;\big(d\cdot r(ab,c)^{-1}r(a,b)^{-1}\Tilde{a}\big(r(b,c)\big)r(a,bc)\big)\otimes l_{abc}\\
&=\;\big(d\cdot \delta(r)(a,b,c)\big)\otimes l_{abc}\\
&=\;\delta(r)(a,b,c)\cdot d\otimes l_{abc}\,.
\end{align*}
The coboundary $\delta(r)(a,b,c)$ is precisely the definition of $T(\mathbb D)$ as outlined in \cite[Eqn 6.3]{MR25443}.  
\end{proof}

\begin{theorem}\label{Thm:TrivialityCondition}
    A category of the form $\Vec_{\bb L}^\omega\big(\Gal(\bb L/\bb K)\big)$ is Morita trivial if and only if $\infl(\omega)=1\in H^3(\bb K;\bb G_m)$.
\end{theorem}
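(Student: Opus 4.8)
The plan is to obtain the theorem purely by assembling the preceding results, so that no new argument is needed: Proposition \ref{Prop:MoritaTrivialsLookLikeDBim} pins down what a Morita equivalence to $\Vec_{\bb K}$ must look like, Theorem \ref{Thm:MoritaInterpOfT} turns a $G$-normal algebra into a twisted Galois-graded category, Lemma \ref{Lem:WellDefOnCohomology} converts monoidal equivalence into equality of $H^3$-classes, and Corollary \ref{Cor:E&MLClassification} identifies $\im(T)$ with $\ker(\infl)$. I would prove the two implications separately, carrying the cohomology class $[\omega]$ through these identifications.

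For the forward implication, suppose $\s C=\Vec_{\bb L}^\omega\big(\Gal(\bb L/\bb K)\big)$ is Morita equivalent to $\Vec_{\bb K}$. By Proposition \ref{Prop:MoritaTrivialsLookLikeDBim} there is a $G$-normal division algebra $\bb D$ (with $Z(\bb D)=\bb L$) and a monoidal equivalence $\s C\simeq\bb D\Bim_{\bb K}$. Theorem \ref{Thm:MoritaInterpOfT} gives $\bb D\Bim_{\bb K}\simeq\Vec_{\bb L}^{T(\bb D)}\big(\Gal(\bb L/\bb K)\big)$, so $\Vec_{\bb L}^\omega\big(\Gal(\bb L/\bb K)\big)$ and $\Vec_{\bb L}^{T(\bb D)}\big(\Gal(\bb L/\bb K)\big)$ are monoidally equivalent, whence $[\omega]=T(\bb D)$ in $H^3\big(G;\bb L^\times\big)$ by Lemma \ref{Lem:WellDefOnCohomology}. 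Thus $[\omega]\in\im(T)=\ker\big(\infl:H^3(G;\bb L^\times)\to H^3(\bb K;\bb G_m)\big)$ by Corollary \ref{Cor:E&MLClassification}, i.e. $\infl(\omega)=1$.

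For the reverse implication, suppose $\infl(\omega)=1$ in $H^3(\bb K;\bb G_m)$. By Corollary \ref{Cor:E&MLClassification}, $[\omega]\in\ker(\infl)=\im(T)$, so there is a $G$-normal division algebra $\bb D$ with $T(\bb D)=[\omega]$. Theorem \ref{Thm:MoritaInterpOfT} together with Lemma \ref{Lem:WellDefOnCohomology} then produces a monoidal equivalence $\bb D\Bim_{\bb K}\simeq\Vec_{\bb L}^{T(\bb D)}\big(\Gal(\bb L/\bb K)\big)\simeq\Vec_{\bb L}^\omega\big(\Gal(\bb L/\bb K)\big)$. Finally, $\bb D\Bim_{\bb K}$ is Morita trivial: since $\bb D$ is central simple over the separable extension $\bb L/\bb K$ it is separable as a $\bb K$-algebra, so $\bb D\Mod$ is a nonzero — hence faithful — semisimple module category over $\Vec_{\bb K}$ whose dual is ${}_{\bb D}(\Vec_{\bb K})_{\bb D}=\bb D\Bim_{\bb K}$, exactly as in the special case $\bb D=\bb L$ used in the proof of Lemma \ref{Lem:MoritaEquivalence}; hence $\bb D\Mod$ is a Morita equivalence between $\Vec_{\bb K}$ and $\bb D\Bim_{\bb K}$, and so $\Vec_{\bb L}^\omega\big(\Gal(\bb L/\bb K)\big)$ is Morita equivalent to $\Vec_{\bb K}$.

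The only point requiring genuine care — and hence the ``main obstacle'', insofar as there is one — is the cohomological bookkeeping in the two invocations of Lemma \ref{Lem:WellDefOnCohomology}: one must be sure that the class $T(\bb D)$ extracted from $\bb D\Bim_{\bb K}$ lives in $H^3(G;\bb L^\times)$ and is compared to $\omega$ there, and that ``$\infl(\omega)=1$ in $H^3(\bb K;\bb G_m)$'' is precisely the colimit statement matching $\ker(\infl)$ in Corollary \ref{Cor:E&MLClassification} (equivalently, that $\omega$ becomes a coboundary after inflation to some finite Galois extension of $\bb L$). Beyond that, all the substance is already contained in Teichm\"uller's construction, the Eilenberg--Mac Lane classification, and Theorem \ref{Thm:MoritaInterpOfT}.
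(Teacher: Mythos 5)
Your forward implication is exactly the paper's argument: Proposition \ref{Prop:MoritaTrivialsLookLikeDBim}, then Theorem \ref{Thm:MoritaInterpOfT} plus Lemma \ref{Lem:WellDefOnCohomology} to get $[\omega]=T(\bb D)$, then the containment $\im(T)\subseteq\ker(\infl)$ from Corollary \ref{Cor:E&MLClassification}. Your reverse implication, however, takes a genuinely different route. The paper never uses the existence (surjectivity) half of the Eilenberg--Mac Lane theorem: given $\infl(\omega)=1$ it chooses a finite Galois extension $\bb F/\bb L$ on which $\infl_{\bb L}^{\bb F}(\omega)$ becomes trivial and applies categorical inflation (Theorem \ref{Thm:CategorifiesInflation} with Corollary \ref{Cor:PropertiesOfInfl}) to conclude $\Vec_{\bb L}^{\omega}\big(\Gal(\bb L/\bb K)\big)\sim\Vec_{\bb F}^{1}\big(\Gal(\bb F/\bb K)\big)=\bb F\Bim_{\bb K}\sim\Vec_{\bb K}$. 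You instead invoke the realization statement $\ker(\infl)\subseteq\im(T)$ to produce a $G$-normal division algebra $\bb D$ with $T(\bb D)=[\omega]$ (legitimate, via Lemma \ref{Lem:GNormalAlgebras=FixedPoints} to get an $\bb L$-central division algebra representative), and then run Theorem \ref{Thm:MoritaInterpOfT} backwards, finishing with the observation that $\bb D\Mod$ is a faithful module category over $\Vec_{\bb K}$ whose dual is $\bb D\Bim_{\bb K}$ --- which is indeed the separable-algebra Morita equivalence the paper records as an example. Both arguments are valid. Yours leans on the harder, constructive half of the classical Eilenberg--Mac Lane result (building a crossed-product-type algebra realizing a given class), which the paper states but carefully avoids needing; the paper's route is more self-contained given its inflation machinery and stays entirely within the $\Vec_{\bb L}^{\omega}$ formalism, at the cost of passing to a larger field. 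Your version has the mild aesthetic advantage of exhibiting an explicit Morita trivialization over the original field $\bb L$ rather than over an auxiliary extension $\bb F$.
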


\begin{proof}
    If $\Vec_{\bb L}^\omega\big(\Gal(\bb L/\bb K)\big)$ is Morita trivial, Proposition \ref{Prop:MoritaTrivialsLookLikeDBim} shows that it must be equivalent to $\bb D\Bim_{\bb K}$ for some $G$-normal algebra $\mathbb D$.  Theorem \ref{Thm:MoritaInterpOfT} establishes that $\omega=T(\mathbb D)$, and the characterization of $\im(T)$ from Corollary \ref{Cor:E&MLClassification} shows us that $\infl(\omega)=1$.

    Conversely, if $\infl(\omega)=1$, then there must be some finite Galois extension $\mathbb F/\bb L$ such that $\infl_{\bb L}^{\bb F}(\omega)=1\in H^3\big(\Gal(\mathbb F/\mathbb K);\mathbb F^\times\big)$.  Thus by using categorical inflation (Proposition \ref{Thm:CategorifiesInflation}), we find that the category $\Vec_{\bb L}^\omega(G)$ is Morita equivalent to 
    $$\Vec^{\infl_{\bb L}^{\bb F}(\omega)}_{\mathbb F}\big(\Gal(\mathbb F/\mathbb K)\big)\;=\;\Vec^{1}_{\mathbb F}\big(\Gal(\mathbb F/\mathbb K)\big)\;=\;\mathbb F\Bim_{\bb K}\,.$$
    This last category is evidently Morita trivial, so we are done.
\end{proof}

We are now equipped to prove the main theorem of the entire paper.

\begin{theorem}\label{Thm:H3Classification}
    The map $\Psi: H^3(\bb K;\bb G_m) \rightarrow \Inv(\bb K)$ is an isomorphism of groups.
\end{theorem}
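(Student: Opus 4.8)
The plan is to assemble the results of Sections \ref{sec:InvertibleMor}--\ref{sec:Obstruction}: essentially all of the genuine work is already done, and what remains is to verify that $\Psi$ is a homomorphism, that it is surjective, and that it is injective. First I would recall that $\Psi$ is a group homomorphism: by Corollary \ref{cor:well-defined} it is the map out of the colimit induced by the family $\{\psi_{\bb L}\}$, each member of which is a group homomorphism by Theorem \ref{Thm:ExamplesAreInvertible}; since $H^3(\bb K;\bb G_m)$ is the colimit of the abelian groups $H^3\big(\Gal(\bb L/\bb K);\bb L^\times\big)$ along the inflation maps, the induced map $\Psi$ is automatically additive.

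For surjectivity, I would begin with an arbitrary class $[\s C]\in\Inv(\bb K)$ and first reduce to the case that $\s C$ is fusion: an invertible multi-fusion category has trivial Drinfeld center by Theorem \ref{Thm:InvertibleIFFZ(C)=Vec}, hence is indecomposable (a decomposable multi-fusion category splits into orthogonal pieces and so has Drinfeld center of rank at least two), and an indecomposable multi-fusion category is Morita equivalent to the fusion category sitting in any one of its diagonal components. So we may assume $\s C$ is an invertible fusion category; set $\bb E=\Omega\s C$ and choose a splitting field for $\s C$ via Lemma \ref{Lem:SplittingFieldsExist}, enlarged if necessary to a Galois extension $\bb L/\bb K$ with $\bb L\supseteq\bb E$. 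By Theorem \ref{Thm:InflationAlongASplittingField}, $\Infl_{\bb E}^{\bb L}(\s C)\mathop{\simeq}\limits^{\otimes}\Vec_{\bb L}^{\omega}\big(\Gal(\bb L/\bb K)\big)$ for some $\omega\in Z^3\big(\Gal(\bb L/\bb K);\bb L^\times\big)$, while Corollary \ref{Cor:PropertiesOfInfl} guarantees that categorical inflation preserves the Morita class, so $\s C\sim\Vec_{\bb L}^{\omega}\big(\Gal(\bb L/\bb K)\big)$. Unwinding the definitions of $\Psi$ and $\psi_{\bb L}$, this reads $\Psi\big(\infl[\omega]\big)=\psi_{\bb L}\big([\omega]\big)=\big[\Vec_{\bb L}^{\omega}\big(\Gal(\bb L/\bb K)\big)\big]_{\sim}=[\s C]$, so $\Psi$ is onto.

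For injectivity, suppose $x\in H^3(\bb K;\bb G_m)$ lies in $\ker\Psi$. Writing the colimit concretely, $x$ is the image $\infl[\omega]$ of some $[\omega]\in H^3\big(\Gal(\bb L/\bb K);\bb L^\times\big)$ for a finite Galois extension $\bb L/\bb K$, where $\infl$ is exactly the structure map of the colimit. Then $\big[\Vec_{\bb L}^{\omega}\big(\Gal(\bb L/\bb K)\big)\big]_{\sim}=\psi_{\bb L}\big([\omega]\big)=\Psi(x)$ is the trivial class, i.e.\ $\Vec_{\bb L}^{\omega}\big(\Gal(\bb L/\bb K)\big)$ is Morita trivial. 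Theorem \ref{Thm:TrivialityCondition} then gives $\infl[\omega]=1\in H^3(\bb K;\bb G_m)$, that is $x=0$. Combining the three steps, $\Psi$ is an isomorphism.

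I expect the only real friction to be the multi-fusion$\,\to\,$fusion reduction in the surjectivity step — one must check that ``indecomposable implies Morita equivalent to a diagonal component'' is available in the locally separable setting (it is, via the matrix-category discussion together with Lemma \ref{Lem:FaithfulAndCommutant}) — together with keeping the colimit bookkeeping honest, namely identifying the map $\infl$ that appears in Theorem \ref{Thm:TrivialityCondition} with the canonical map $H^3\big(\Gal(\bb L/\bb K);\bb L^\times\big)\to H^3(\bb K;\bb G_m)$ used to present elements of the colimit, so that Theorem \ref{Thm:TrivialityCondition} literally yields $x=0$ rather than merely triviality at some finite level.
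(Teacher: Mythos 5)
Your proof is correct and follows essentially the same route as the paper: surjectivity via inflation to a splitting field (Theorem \ref{Thm:InflationAlongASplittingField}) and injectivity via the triviality criterion of Theorem \ref{Thm:TrivialityCondition}, using the colimit presentation of $H^3(\bb K;\bb G_m)$ from Proposition \ref{Prop:AbsoluteCohomologyIsAColimit}. The one place you go beyond the paper's write-up is the explicit reduction from an invertible multi-fusion category to a Morita-equivalent invertible fusion category (trivial center $\Rightarrow$ indecomposable $\Rightarrow$ Morita equivalent to a diagonal component); the paper leaves this step implicit when it asserts that surjectivity follows \emph{immediately} from Theorem \ref{Thm:InflationAlongASplittingField}, which is stated only for fusion categories, so your extra care here is both correct and worthwhile.
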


\begin{proof}
    Surjectivity of $\Psi$ follows immediately from Theorem \ref{Thm:InflationAlongASplittingField}, so we need only show injectivity.
    
    Proposition \ref{Prop:AbsoluteCohomologyIsAColimit} shows that every class $\beta\in H^3(\bb K;\bb G_m)$ is of the form $\infl(\omega)$ for some finite field extension $\bb L/\bb K$, and some $\omega\in H^3\big(\Gal(\bb L/\bb K);\bb L^\times)$.

    Suppose $\beta=\infl(\omega)$ is in $\ker(\Psi)$.  Then we would have
    \[[\Vec_{\bb K}]_{\sim}\;=\;\Psi(\beta)\;=\;\psi_{\bb L}(\omega)\;=\;\Big[\Vec_{\bb L}^\omega\big(\Gal(\bb L/\bb K)\big)\Big]_\sim\,.\]
    Thus by Theorem \ref{Thm:TrivialityCondition}, $1=\infl(\omega)=\beta$, so $\Psi$ is injective.
\end{proof}

\section{The Atiyah-Hirzebruch spectral sequence} \label{Sec:spectral}

Up to this point we have presented a direct technical computation of $\Inv(\bb K)$.  Here we present an alternative high-level argument that establishes the isomorphism $H^3(\bb K;\bb G_m) \rightarrow \Inv(\bb K)$ abstractly using a spectral sequence. This abstract argument does not establish that each equivalence class is represented by a fusion category (let alone one of a specific form).

Given a field $\bb K$, we can form the 3-category $\mFus_{\bb K}$ (as outlined in \cite{johnson-freydOpLaxNaturalTransformations}, see also \cite{douglasDualizable}), whose objects are multi-fusion categories over $\bb K$, 1-morphisms from $\s C$ to $\s D$ are locally separable (over $\bb K$) $\s C$-$\s D$ bimodule categories, 2-morphisms are $\bb K$-linear bimodule functors, and 3-morphisms are bimodule natural transformations.  This category admits a symmetric monoidal structure given by the Deligne product $\boxtimes=\boxtimes_{\bb K}$.

The core of $\mFus_{\bb K}$ is the symmetric monoidal 3-groupoid $\underline{\Inv_{\bb K}}$ obtained by removing all non-invertible morphisms at every level.  The groupoid $\underline{\Inv_{\bb K}}$ can be thought of as a homotopy 3-type via the homotopy hypothesis, or equivalently as a spectrum.  We wish to apply the Atiyah-Hirzebruch spectral sequence to (Galois-)twisted-equivariant generalized cohomology with coefficients in this spectrum, and for this purpose it is convenient to index this spectrum in such a way that it is coconnective.  In other words, our purposes require that top-dimensional morphisms contribute homotopy at level zero.  Thus the homotopy groups of our spectrum $\underline{\Inv_{\bb K}}$ are given as follows
\begin{gather*}
    \pi_k\underline{\Inv_{\bb K}}\;=\;
    \begin{cases}
        \bb K^\times & k=0\\
        0 & k=-1\\
        \Br(\bb K) & k=-2\\
        \Inv(\bb K) & k=-3\\
        0 & \text{all other }k\,.
    \end{cases}
\end{gather*}

Given a separable field extension $\bb L/\bb K$, there is a 3-functor $\mFus_{\bb K}\to\mFus_{\bb L}$ called base extension that is given by everywhere applying $\Vec_{\bb L}\boxtimes_{\bb K}(-)$.  This functor is faithful (on 3-morphisms), and hence restricts to a morphism $\underline{\Inv_{\bb K}}\to\underline{\Inv_{\bb L}}$ of spectra.

When $\bb L/\bb K$ is Galois, the Galois group $G=\Gal(\bb L/\bb K)$ acts on $\underline{\Inv_{\bb L}}$, and the 3-groupoid of homotopy fixed points of this action is canonically equivalent to $\underline{\Inv_{\bb K}}$.  Thus it follows that the coefficient groups
\begin{gather*}
    \underline{\Inv_{\bb K}}^n(\text{pt})\;:=\;H^n(\text{pt};\underline{\Inv_{\bb K}})\;=\;H^n_G(\text{pt};\underline{\Inv_{\bb L}})
\end{gather*}
can be computed via the Atiyah-Hirzebruch spectral sequence
\[E^{p,q}_2\;\cong\;H^p_G(\text{pt};\pi_{-q}\underline{\Inv_{\bb L}})\Rightarrow H^{p+q}_G(\text{pt};\underline{\Inv_{\bb L}})\,.\]
For any $\bb ZG$ module $A$, we can identify the Borel equivariant cohomology $H^*_G(\text{pt};A)\cong H^*(BG;A)$ with (twisted) group cohomology $H^*(G;A)$.  With this identification in place, the $E_2$ page is as follows.

\begin{center}
    \includegraphics[width=\textwidth]{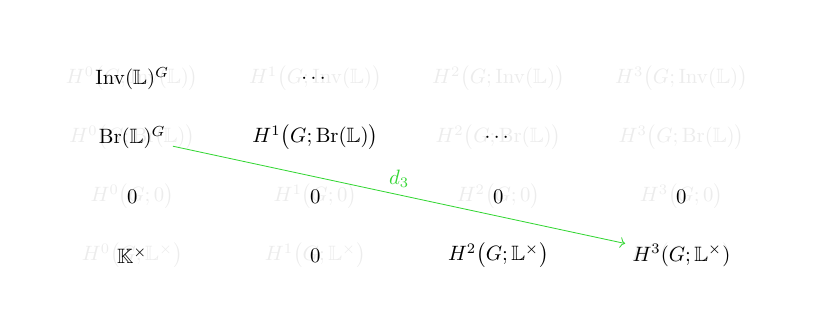}
\end{center}

The terms in total degree 4 that survive to the $E_{\infty}$ page will combine to produce the associated graded group of $\Inv(\bb K)$ with respect to the filtration induced by the extension $\bb L/\bb K$.  Different choices of $\bb L$ will produce different filtrations on $\Inv(\bb K)$, and we can pass to different filtrations via categorical inflation.  Note that inflation produces a different spectral sequence entirely, and should not be confused with turning the page of the current spectral sequence.

It is instructive to consider the case where $\bb L=\bb K^\text{sep}$.  In this case, $\Br(\bb L)=1$, and Example \ref{Eg:Closed=>Inv(k)=1} shows that $\Inv(\bb L)=1$ also.  Thus the groups $E^{p,q}_2$ vanish for $q\geq1$.
Since this sequence has already collapsed and there is no extension problem to solve, we find that
\begin{gather*}
    \Inv(\bb K)=\underline{\Inv_{\bb K}}^{3}(\text{pt})\;\cong\;H^3(G;\bb L^\times)\;=:\;H^3(\bb K;\bb G_m)\,,\;\text{and}\\
    \Br(\bb K)=\underline{\Inv_{\bb K}}^{2}(\text{pt})\;\cong\;H^2(G;\bb L^\times)\;=:\;H^2(\bb K;\bb G_m)\,.
\end{gather*}

Thus the Atiyah-Hirzebruch spectral sequence associated to these spectra and the field extension $\bb K^\text{sep}/\bb K$ serves as a high-level alternative to the proof of Theorem \ref{Thm:H3Classification}, as well as the $H^2$ classification of $\Br(\bb K)$.

The highlighted $d_3$ differential is precisely the Teichmüller construction that produces a 3-cocycle from a $G$-normal algebra.  This explains why all cocycles of the form $T(\bb D)$ do not contribute to $\Inv(\bb K)$.

The Galois group $G$ acts on the collection of all invertible multi-fusion categories over $\bb L$, and the terms in bidegree $(0,3)$ are the classes in $\Inv(\bb L)$ that are fixed by this action.  In order for these terms to contribute to $\Inv(\bb K)$ they must be in the kernel of both $d_2$ and $d_4$ differentials.  Being in both of these kernels is precisely the obstruction theory data required to ensure that a fixed Morita class can be upgraded to be a homotopy fixed point of the action.  Thus $\ker(d_4)$ is the subgroup of $\Inv(\bb L)$ consisting of the underlying classes of homotopy fixed points, and therefore by Galois descent, $\ker(d_4)$ is the collection of all classes in $\Inv(\bb L)$ that arise from extension of scalars $\Inv(\bb K)\to\Inv(\bb L)$.  In analogy with the Brauer group, the intermediate group $\ker(d_2)$ should be thought of as the collection of `$G$-normal fusion categories'.

The cocycles representing terms in bidegree $(1,2)$ are twisted homomorphisms from $G$ to $\Br(\bb L)$, and these will contribute to $\Inv(\bb K)$ when they are in the kernel of $d_3$.  These twisted homomorphisms can be extracted from invertible fusion categories $\s C$ over $\bb K$ that are faithfully Galois graded.  For such categories, $\s C_1\simeq\Vec_{\bb L}$, and every graded component will be of the form $\s C_g\simeq\bb D_g\Mod$ for some $\bb L$-central division algebra $\bb D_g$.  The resulting twisted homomorphism is $f(g):=[\bb D_g]$.

The terms in total degree 2 of the $E_{\infty}$ page will determine $\Br(\bb K)$ as an extension of the form below
\[H^2(G;\bb L^\times)\hookrightarrow\Br(\bb K)\twoheadrightarrow\ker(d_3)\,.\]
It was proven in \cite[Thm 7.1]{MR25443} that $\ker(d_3)$ is precisely those $G$-normal algebras that arise from scalar extension from $\bb K$ to $\bb L$.  In other words, $H^2(G;\bb L^\times)$ is the kernel of the map $\Br(\bb K)\to\Br(\bb L)$ induced by extension of scalars.  This matches the classical description of the relative Brauer group $\Br(\bb L/\bb K)$ as being isomorphic to $H^2(G;\bb L^\times)$.

\bibliography{main}{}
\bibliographystyle{alpha}

\end{document}